\newcommand{\vel}{\mathcal{V}}
\newcommand{\real}{\mathbb{R}}
\newcommand{\prob}{{\cal  P}}
\newcommand{\mes}{{\cal  M}}
\newcommand{\p}[1]{\left(#1\right)}
\newcommand{\abs}[1]{\left|#1\right|}
\newcommand{\norm}[1]{\left\|#1\right\|}
\newcommand{\der}[2]{\frac{\partial #1}{\partial #2} }
\newcommand{\commentout}[1]{}
\newcommand{\bs}{\backslash} 
\newcommand{\qed}{ \ensuremath{\Box}}
\newtheorem{remark}{Remark}
\newcommand{\be}{\begin{equation}}
\newcommand{\ee}{\end{equation}}
\newcommand{\ba}{\begin{eqnarray}}
\newcommand{\ea}{\end{eqnarray}}
\def\1{\mathbb{I}}
\def\R{\mathbb{R}}
\def\RR{\mathbb{R}}
\def\a{\alpha}
\def\calW{\mathcal{W}}
\def\calB{\mathcal{B}}
\def\rmL{\mathrm{L}}
\title{Characterization of radially symmetric finite time blowup in multidimensional aggregation equations\thanks{This work was supported by NSF grants DMS-0714945, DMS-0907931 and DMS-1109805.   The date of this manuscript is Jan.1, 2011.}}
\author{Andrea L. Bertozzi\thanks{Department of Mathematics, University of California Los Angeles, Los Angeles, CA 90095 ({\tt bertozzi@math.ucla.edu}).}
        \and John B. Garnett \thanks{Department of Mathematics, University of California Los Angeles, Los Angeles, CA 90095 {\tt (jbg@math.ucla.edu)}} \and Thomas Laurent \thanks{Department of Mathematics, University of California Riverside, Riverside CA 92521}  ({\tt laurent@math.ucr.edu}) }
\begin{document} 

 \maketitle

\begin{abstract}
This paper studies the transport of a mass $\mu$ in $\real^d, d \geq 2,$  by a flow field $v= -\nabla K*\mu$.  
We focus on kernels $K=|x|^\alpha/ \alpha$ for $2-d\leq  \alpha<2$ for which the smooth densities are known to develop singularities in finite time.  
For this range
 we prove the existence for all time of radially symmetric measure solutions
 that are monotone decreasing as a function of the radius, thus allowing for continuation of the solution past the blowup time.
The monotone constraint on the data is consistent with the typical blowup profiles observed in recent numerical studies of these singularities.  
We prove monotonicity is preserved for all time, even after blowup, in contrast to the case  $\alpha >2$
 where radially symmetric solutions are  known to  lose monotonicity.
In the case of the Newtonian potential ($\alpha=2-d$), under the assumption
 of radial symmetry the equation can be transformed
 into the inviscid Burgers equation on a half line.  This enables us to prove preservation of monotonicity using the classical theory of conservation laws.
 In the case $2 -d < \alpha < 2$ and at the critical exponent $p$
we exhibit initial data in $L^p$ for which the solution immediately develops a Dirac mass singularity.  This extends recent work on the local
ill-posedness of solutions at the critical exponent.
\end{abstract}
\date\today


\section{Introduction}

\

This manuscript considers the problem of dynamic nonlocal aggregation equations of the form \begin{equation}
\frac{\partial \rho}{\partial t}  - \text{div}(\rho \nabla K*\rho) = 0
\label{aggeq}
\end{equation}
in $\real^d$ for $d \ge 2$.
This problem has been a very active area of
research in the literature \cite{BenedettoCagliotiPulvirenti97,
BB,
BCL,
BL,
BLR,
BW,
BCM,
BDP,
BV,
BCMo,
BCKSV,
BuCM,
BDi,
CDFLS,
cr09,
DP,
GP,
HJD,
HP,
HPPRL,HB,
Keller-Segel-70,
L,
LR1,
LR2,
LT,
LZ,
ME,
MCO,
TB,
TBL,
tosc:gran:00}. 
These models arise in a number of applications including
 aggregation in materials science \cite{HP,
HPPRL,Poupaud1, Poupaud2}, cooperative control \cite{GP}, granular flow \cite{Carrillo-McCann-Villani03,Carrillo-McCann-Villani06,tosc:gran:00}, biological swarming models \cite{ME,TB,TBL}, evolution of vortex densities in superconductors
\cite{Weinan:1994,
AS,AMS,DZ, Masmoudi} and bacterial chemotaxis \cite{BCKSV,Keller-Segel-70,BCM,BDP}.
A body of recent work has focused on the problem of finite time singularities and
local vs global well-posedness in multiple space dimension for both
the inviscid case (\ref{aggeq}) \cite{BB,BCL,BL,BLR,BV,BS,CDFLS,HJD,HB,L}
and the cases with various kinds of diffusion \cite{BRB,BCM,BS,LR1,LR2}.  
The highly studied Keller-Segel problem  typically has a Newtonian potential and linear diffusion.
For the pure transport problem  (\ref{aggeq}),
of particular interest is the transition from smooth solutions to weak and measure solutions
with mass concentration.  This paper presents a general framework for
radially symmetric solutions that blowup in finite time in which
the initial data decreases monotonically from the origin.
This paper differentiates itself from the previous work in that it considers continuation of the solution as a measure past the initial singularity for the range of $1>\alpha>2-d$.
Prior work  on measure solutions considers the case $\alpha\geq 1$ in general dimension \cite{CDFLS} and the Newtonian case with a defect measure in two dimensions \cite{Poupaud2}.
The monotone constraint is typical of the local structure of the solution near the blowup time as shown in numerical simulations.
This paper presents rigorous theory for solutions with such structure, showing that the nonlocal evolution preserves this global structure.
Much work has been done on related problems in fluid dynamics to understand measure solutions of active scalar equations \cite{Delort,DM,LNS07,Poupaud2,Zheng}.  
The results of this paper may provide further insight for those problems.

In the case of power-law kernels e.g. $K(x) = |x|^\alpha/ \alpha$, it is already known that the critical power is $\alpha=2$. For $\alpha<2$ finite time singularities always arise and for $\alpha\geq 2 $
solutions stay smooth for all time if the initial data is smooth \cite{BCL,BL,CDFLS}. In the case of finite time blowup, i.e. $\alpha<2$, it has been observed numerically \cite{HB,HB1,Hthesis} that, starting with a smooth radially symmetric  initial data which is monotone decreasing as a function of the radius, the solution evolves so that the monotonicity is preserved and at some finite time develops a power-law singularity at the origin. 
The power is sufficiently singular that, based on the results in this paper, the solution produces an instantaneous mass
concentration after the initial singularity. To make this rigorous we must develop a general theory for such singular solutions.
This paper addresses this particular class of measure solutions, namely those with a radially symmetric decreasing profile and possibly  a Dirac mass at the origin. We prove that this structure is  maintained for all time when $\alpha<2$.
We conjecture that this class of radially symmetric decreasing solutions including a Dirac mass at the origin describes well the local behavior at the blow up time of general non radially symmetric solutions.   We also note that  for $\alpha>2$ it has been observed in numerical simulations \cite{HB1,Hthesis}  that   monotone decreasing structures are not preserved: indeed there is an attracting solution
of the form of a collapsing delta-ring which  causes mass to
collect on the ring  during the collapse thereby destroying any initial monotone property of the solution \cite{HB1,Hthesis}. 

We note that our work fits nicely between the general measure
theory in \cite{CDFLS} (for $\alpha\geq 1$) and previous works which consider more regular classes of weak solutions including $L^\infty$ \cite{BB,BCL} and $L^p$ \cite{BLR}.  For $L^\infty$ and $L^p$ solutions
we typically have only local well-posedness whereas the measure solutions have global well-posedness. 
Our work extends the global existence results in \cite{CDFLS} to the case of more singular kernels with power $\a\geq 2-d$, for the special case of monotone decreasing radially symmetric  measure solutions.  This includes that of the Newtonian potential, which is discussed separately in the next paragraph.  Uniqueness of solutions for $2-d<\a<1$ is still an open problem.

 In two space dimensions, when $K(x)= \log(|x|)$ (i.e. $K$ is the  Newtonian
potential),  the aggregation equation  arises as a model for the
 evolution of vortex densities in superconductors
\cite{Weinan:1994,SandierSerfaty, SandierSerfatybook,LinZhang,
AS,AMS,Mainini,DZ, Masmoudi}, and also in models for adhesion dynamics
\cite{Poupaud1, Poupaud2}.  In these models singularities are known to
appear in finite time, and the question of interest is how to continue the
solution after the initial  formation of singularities.  Since these
singularities are expected to be Dirac masses one has to consider measure
solutions. Unfortunately, due to the very singular behavior of the Newtonian
potential at the origin, most of the results to date concern  the
existence of measure solutions 
 which contain an error term (a defect measure) compared to the original
equation \cite{DZ,Poupaud1,AMS}. Also uniqueness is lacking in these works. 

In this paper we consider the Newtonian case in all dimensions and show
that for general radially symmetric data there is no need to consider a defect measure
because the symmetry allows the problem to be reduced to a form of the inviscid Burgers equation on the half line, for which many things are known.  In particular, the case of radially symmetric
monotone decreasing densities maps to classical Lipschitz solutions of the inviscid
Burgers equation, without shocks, allowing us to prove such solutions exist and are unique for all time.
For the non-monotone case, shocks can form, corresponding to mass concentrations along
spherical shells, and their evolution is not immediately well-defined, due to a jump in the velocity field at the shell.  However, one can
use the classical weak solution theory for Burgers equation to define a jump condition through a weak form
of the evolution equation, or through some other convention.  If we use the classical Burgers 
shock solution then the solution is unique since it automatically satisfies the Lax entropy condition.
The more singular case of signed measures can also be studied in this framework, in which
case one must consider rarefaction solutions as well as shocks.

Going back to the case $K(x)=\abs{x}^\alpha/\alpha$, $2-d<\alpha<2$, recent computational results \cite{HB} show that the initial finite
time blowup from radially symmetric data has a simple self-similar form
in which the power-laws of the similarity solution have anomalous scaling
but the shape of the similarity solution has a simple monotonically decreasing
structure with powerlaw tail.  The power in the tail determines the
degree of singularity of the solution at the initial blowup time -
we observe that at the initial blowup time the solution leaves $L^\infty$ but remains in some $L^p$ spaces and does not
concentrate mass. 
This result prompted a careful study
of the well-posedness of the equation in $L^p$ spaces \cite{BLR}.
In that paper it was proved that for a given interaction kernel, there
exists a critical $L^p$ space such that the problem is locally well-posed
for $p>p_c$.  Moreover it was proved in \cite{BLR}
that the power $p_c$ is sharp for $K=|x|$, i.e.
the problem is locally ill-posed for $p<p_c$. 
Some of these results, in particular the critical $L^{p}$ space,
have been extended to general power-law kernels in \cite{HJD}. In the present work we examine the mechanism by which initial data in
the critical space $L^{p_c}$  leave instantaneously this space.
Taking advantage of our existence theory for radially symmetric decreasing measure solutions when $2-d<\alpha<2$, we exhibit a large class of radially symmetric decreasing initial data in $L^{p_c}$ for which a Dirac mass forms  instantaneously in the solution. This is a natural extension of the results in \cite{BLR} and \cite{HJD}.

This paper is organized as follows: below we review the mathematical notation and basic functional analysis used in this paper.  Section~\ref{genrad} develops a general existence theory for radially symmetric solutions with the monotonicity constraint.    
Section~\ref{instant} proves instantaneous mass concentration for the critical $L^p$ spaces.
Section~\ref{Newtonian} considers the case of the Newtonian potential, for which 
we can show that radial symmetry results in a transformation of the nonlocal problem to the inviscid Burgers equation on a half line. Section~\ref{conclusions} summarizes the results and discusses some open problems.
In the appendix we derive some background theory of ordinary differential equations needed for the proofs in this paper and not derived in standard references (although the arguments
are similar to standard methods).

\subsection{Mathematical formulations and notation}

\

The aggregation equation, for smooth solutions, in Eulerian coordinates, is
\begin{align}
&\frac{\partial \rho}{\partial t} + \text{div} (\rho v)=0  \label{agg1}\\
&v(x,t)=-\nabla K*\rho.
\label{agg2}
\end{align}
For very singular kernels, and correspondingly singular solutions - in general measure solutions - it makes sense to reformulate the problem in Lagrangian coordinates
and work mainly in this framework to develop the theory.  
It is easy to see, in the case of strong solutions, that the above formulation is equivalent to 
 \begin{gather}
 \rho(t)= \sigma^t \# \rho_{init} \label{sol1a},\\ \text{ $\sigma$ is the flow map associated to  the field $v=-(\nabla K * \rho(t))(x)$} \label{sol2a}.
\end{gather}
In other words, the mass $\rho$ is transported by characteristics $\sigma$ that satisfy the ordinary differential equation

$$\frac{d}{dt}{\sigma(x,t)} = v(\sigma(x,t), t) \quad , \quad \sigma(x,0)=x.$$  The map $\sigma^t: \real^d \to \real^d$ is defined by $\sigma^t(x)=\sigma(x,t)$   and  $\sigma^t \# \rho_{init}$ stands for the push forward of the measure $\rho_{init}$ by the map $\sigma^t$ (see below for a precise definition of the push forward).  
We work with formulation (\ref{sol1a}-\ref{sol2a}) to prove existence of solutions, rather than (\ref{agg1}-\ref{agg2}).  We refer to this
as the {\em Lagrangian formulation} of the problem.
Note, in particular, that the flux $\rho v$ in (\ref{agg1}) may be difficult to define,
for the product of a measure $\rho$ and a velocity field that blows up precisely at the point where $\rho$ concentrates mass.   This phenomena can occur, for example, in the case of power law potentials potentials $K(x)=\abs{x}^\alpha / \alpha $, $\alpha<1$, for which existence theory was not known prior to this work. Since we are working with a purely transport problem it is very natural to work in a Lagrangian framework.  The radial symmetry combined with monotonicity provides a focusing effect in which 
the only mass concentration occurs precisely at the origin, providing a natural way to keep track of mass transport in this problem.
 We now introduce some technical notation and corresponding well-known functional analytic results.
\begin{itemize}
\item $\mes(\real^d)$ stands for the space of Borel non-negative measure on $\real^d$ which have finite mass.

\item $\mes_R(\real^d)$ is the set of $\mu \in  \mes(\real^d)$ which are radially symmetric.

\item  $\mes_{RD}(\real^d)$  is the set of $\mu \in  \mes(\real^d)$ which are radially symmetric and decreasing. To be more precise,  $\mu$ belongs to $\mes_{RD}(\real^d)$ if and only if it can be written 
$
\mu=m \delta + g
$,
where  $m\in [0,+\infty)$, $\delta$ is the Dirac delta measure at the origin and $g$ is an $L^1$ function  which is nonnegative,  radially symmetric and monotone decreasing as a function of the radius.

 \item $\prob(\real^d)$, $\prob_R(\real^d)$ and $\prob_{RD}(\real^d)$ are the subset of $\mes(\real^d)$, $\mes_R(\real^d)$ and $\mes_{RD}(\real^d)$ respectively which are made of measure of mass $1$.

\item $\prob_2(\real^d)\subset \prob(\real^d)$, is the subspace of probability measure of finite second moment, i.e. $\int_{\real^d} |x|^2 d\mu(x) <\infty$.

\item {We say that a sequence $(\mu_n) \subset \prob(\real^d)$ converges narrowly to $\mu\in \prob(\real^d)$, denoted by $\mu_n \rightharpoonup \mu$, if
$$\lim_{n\to\infty} \int_{\real^d} f(x) d\mu_n(x) = \int_{\real^d} f(x)  d\mu(x)$$
for every $f \in C_b^0(\real^d)$, the space of continuous and bounded  real function defined on $\real^d$.}

\item $C_{w}([0,+\infty),\prob(\real^d))$ is the set of functions $\mu:[0,+\infty)\to \prob(\real^d)$ which are narrowly continuous, i.e. $\mu(t+h) \rightharpoonup \mu(t)$ as $h\to 0$ $\forall t\geq 0$.

\item For $\mu$ and $\nu$ in $\prob_2(\real^d)$, $W_2(\mu,\nu)$ stands for the Wasserstein distance with quadratic cost between $\mu$ and $\nu$
(See  [65] for the definition and properties of the Wasserstein distance $W_2(\mu,\nu)$).
Recall that $\prob_2(\real^d)$, endowed with the metric $W_2$ is a complete metric space.  Furthermore, $$\lim_{n\to \infty} W_2(\mu_n,\mu) = 0 \;\; \Rightarrow \;\; \mu_n\rightharpoonup\mu \quad \text{ as } n\to\infty.$$
\item $C([0,+\infty),\prob_2(\real^d))$ is the set of functions from $[0,+\infty)$ to $\prob_2(\real^d)$ which are continuous with respect to $W_2$.
 Note that $$C([0,+\infty),\prob_2(\real^d)) \subset C_w([0,+\infty),\prob(\real^d)).$$
 The space $C([0,+\infty),\prob_2(\real^d))$ is endowed with the distance $$\calW_2(\mu,\nu) = \sup_{t\geq 0} W_2(\mu(t),\nu(t)).$$
 
 \item If $T:\real^d\to\real^d$ is a Borel map, and if $\mu\in \mes(\real^d)$, we denote by $T\# \mu$ the push forward of $\mu$ through $T$, defined by $T\#\mu(B) = \mu(T^{-1} (B))$, $\forall B\in \calB(\real^d)$.
More generally we have 
$$
\int_{\real^d} f(T(x)) d \mu(x)= \int_{\real^d} f(x) d (T \# \mu)(x)
$$
for every bounded  Borel function $f : \real^d \to \real$.

 \item Both $B(0,R)$ and $B_R$ will be used to denote the open ball of radius $R$, $\{x\in \real^d: \abs{x} < R \}$. $A_\epsilon$, $\epsilon<1$,  denotes the annulus   $\{ x\in \real^d: \epsilon < |x| < 1\}$.
\end{itemize}
  All the probability measures in this paper are compactly supported  and therefore belong to the space $\prob_2(\real^n)$ on which the Wasserstein distance with quadratic cost is defined.

\subsection{ Lagrangian solutions versus distributional solutions}

\

Let us focus on power law kernels $K=|x|^\alpha/ \alpha$ for simplicity.
 If $v$ is bounded on compact sets, which is only true for $\alpha \ge 1$, it is then standard (see \cite{AGS} or  \cite[Proposition 4.8]{BLR} for example) to prove that if  $\rho$ and $\sigma$ satisfy \eqref{sol1a} and \eqref{sol2a}, then $\rho$ is a distributional solution of the aggregation equation, i.e.
\begin{gather} 
 \int_0^{+\infty} \int_{\real^d}  \Big( \; \frac{d\xi}{dt}(x,t) +
  \nabla \xi(x,t) \cdot v_t (x) \; \Big) \; d\rho_t(x) \; dt =0 \label{dist1},\\
 v_t(x)=v(x,t)= -(\nabla K * \rho(t))(x) \label{dist2}
 \end{gather}
 for all $\xi \in C_0^\infty(\real^d\times (0,+\infty))$. On the other hand, if  $2-d<\alpha < 1$, then the velocity field $x \mapsto v_t(x)$ is not  bounded and it is not clear how to give a sense to \eqref{dist1}.  Hence we use the Lagrangian formulation of the problem throughout most of this paper.  
 In the special case of the Newtonian potential, in Section~\ref{Newtonian} we transform using mass variables to Burgers equation for which it again makes sense to use a distributional form of the problem albeit in a different coordinate system.


\section{General theory of radially symmetric decreasing solutions}
\label{genrad}

\

This section is devoted to the proof of the following theorem:

\begin{theorem} \label{thm:rd}
Let $\nabla K(x)=x {\abs{x}^{\alpha-2}}$, $\alpha \in (2-d,2)$. Given $\rho_{init} \in \prob_{RD}(\real^d)$ with compact support, there exists $\rho \in C([0,+\infty),\prob_{RD}(\real^d))$ and a continuous map $\sigma: [0,+\infty) \times \real^d \to \real^d$ satisfying 
\begin{gather}
 \rho(t)= \sigma^t \# \rho_{init} \label{sol1aa},\\ \text{$\sigma$ is the flow map associated to  the  field }
 v(x,t)=\begin{cases}-(\nabla K * \rho(t))(x),& x\neq 0\\
  0, & x=0.
  \end{cases}
   \label{sol2aa}
\end{gather}

\end{theorem}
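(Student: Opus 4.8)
The plan is to work throughout in the Lagrangian formulation \eqref{sol1aa}--\eqref{sol2aa}, to use radial symmetry to collapse the equation onto the half line, and to construct the solution by regularizing the kernel, solving the regularized problem globally, and passing to a limit.

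\emph{Step 1: radial reduction and a convexity criterion.} For $\mu\in\prob_{RD}(\real^d)$ let $F_\mu(r)=\mu(B_r)$, which is continuous and nondecreasing with $F_\mu(0^+)$ equal to the Dirac mass $m$ of $\mu$, and let $X_\mu:(0,1)\to[0,\infty)$ be its generalized inverse, so $X_\mu(0^+)=0$, $X_\mu\equiv0$ on the interval $(0,m)$ corresponding to the Dirac mass, and $X_\mu$ genuinely inverts $F_\mu$ elsewhere. Using $X_\mu'=1/F_\mu'$ one finds, on the set where the absolutely continuous part $g$ of $\mu$ is positive,
\[
\frac{\rmd}{\rmd\tau}\,X_\mu(\tau)^d\;=\;\frac{d}{|S^{d-1}|\,g\bp{X_\mu(\tau)}},
\]
so that $\mu\in\prob_{RD}(\real^d)$ if and only if $\tau\mapsto X_\mu(\tau)^d$ is convex on $(0,1)$ (a Dirac mass at the origin being exactly a flat piece of $X_\mu^d$ at the left endpoint). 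By radial symmetry any flow of a radial field is $\sigma^t(x)=\frac{x}{|x|}R(|x|,t)$ for $x\neq0$ and $\sigma^t(0)=0$; and if characteristics do not cross then $F_{\rho(t)}(R(r,t))=F_{\rho_{init}}(r)$, i.e.\ the radial characteristic $R(\cdot,t)$ conjugates $X_{\rho_{init}}$ to $X_{\rho(t)}$, so the whole evolution is carried by the single scalar function $\tau\mapsto X_{\rho(t)}(\tau)$.

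\emph{Step 2: the velocity in radial variables.} Averaging $\nabla K(x-y)=(x-y)|x-y|^{\alpha-2}$ over spheres gives, for $\mu\in\mes_R(\real^d)$, a kernel $\psi_\alpha$ with
\[
-(\nabla K*\mu)(x)\;=\;-\frac{x}{|x|}\int_0^\infty\psi_\alpha(|x|,s)\,\rmd\bar\mu(s),\qquad x\neq0,
\]
$\bar\mu$ being the push-forward of $\mu$ under $x\mapsto|x|$. I would establish two properties of $\psi_\alpha$, each of which singles out the range $2-d<\alpha<2$: (i) for $\mu\in\mes_{RD}(\real^d)$ the right-hand side points inward, so $R(\cdot,t)$ is nonincreasing in $t$, supports stay in a fixed ball $B_{R_0}$, and the field vanishes at the origin, matching the convention in \eqref{sol2aa}; and (ii) for each fixed $s>0$ the function $y\mapsto d\,y^{(d-1)/d}\psi_\alpha(y^{1/d},s)$ is concave. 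Property (i) follows from the expansion $\psi_\alpha(r,s)\sim\frac{d+\alpha-2}{d}\,s^{\alpha-2}r$ as $r/s\to0$, where the hypothesis $\alpha>2-d$ makes the coefficient positive, together with monotonicity of $\bar\mu$. Property (ii) is the analytic heart of the theorem --- it is exactly what must fail for $\alpha\ge2$, in accordance with the known loss of monotonicity there --- and I expect proving it cleanly to be the main obstacle.

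\emph{Step 3: the regularized flow preserves $\prob_{RD}$.} Replace $\nabla K$ by the bounded, globally Lipschitz field $\nabla K_\epsilon(x)=x(|x|^2+\epsilon^2)^{(\alpha-2)/2}$; then $v_\epsilon(\cdot,t)=-\nabla K_\epsilon*\rho_\epsilon(t)$ is bounded and Lipschitz whenever $\rho_\epsilon(t)\in\prob(\real^d)$, so Picard--Lindel\"of gives a unique global flow $\sigma_\epsilon$ and the solution $\rho_\epsilon(t)=\sigma_\epsilon^t\#\rho_{init}$, which by the standard argument recalled in the introduction is a distributional solution of the regularized equation. Since $v_\epsilon$ is Lipschitz in $x$, characteristics never cross, so $\sigma_\epsilon^t$ is a radial homeomorphism and $\rho_\epsilon(t)$ is radial; and along Lagrangian characteristics, with $Y_\epsilon(\tau,t)=X_{\rho_\epsilon(t)}(\tau)^d$ (first for smooth RD data, then by approximation within $\prob_{RD}$), one has $\partial_tY_\epsilon=H_\epsilon(Y_\epsilon,t)$ with $H_\epsilon(y,t)=-\int_0^\infty d\,y^{(d-1)/d}\psi_{\alpha,\epsilon}(y^{1/d},s)\,\rmd\bar\rho_\epsilon(s,t)$, so property (ii) --- valid for $\psi_{\alpha,\epsilon}$ by the same computation --- gives $\partial_y^2H_\epsilon\ge0$. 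Differentiating $\partial_tY_\epsilon=H_\epsilon(Y_\epsilon,t)$ twice in $\tau$, the quantity $w:=\partial_\tau^2Y_\epsilon$ solves the linear ODE $\partial_tw=(\partial_yH_\epsilon)\,w+(\partial_y^2H_\epsilon)(\partial_\tau Y_\epsilon)^2$ with nonnegative source and nonnegative initial data, whence $w\ge0$ for all $t$ by the variation-of-constants formula; hence $\rho_\epsilon(t)\in\prob_{RD}(\real^d)$ for all $t$.

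\emph{Step 4: compactness and the limit $\epsilon\to0$.} Property (i) gives $\mathrm{supp}\,\rho_\epsilon(t)\subset B_{R_0}$ uniformly in $\epsilon$ and $t$; hence the radial characteristics are uniformly bounded, nonincreasing in $t$, and --- using that the speed is inward-directed and controlled by $|x|^{\alpha-1}$ --- equicontinuous in $t$ on compact subsets of $\{r>0\}$ (Lipschitz when $\alpha\ge1$, H\"older otherwise), while each $X_{\rho_\epsilon(t)}$ is monotone with convex $d$-th power. Helly's selection theorem and Arzel\`a--Ascoli then give a subsequence with $X_{\rho_\epsilon(t)}\to X(\cdot,t)$, the limit monotone with $X(\cdot,t)^d$ convex (pointwise limits of convex functions are convex), hence continuous; set $\rho(t)\in\prob_{RD}(\real^d)$ to be the measure with inverse mass function $X(\cdot,t)$ --- which may carry a Dirac mass at the origin even though $\rho_{init}$ does not --- and $\sigma^t(x)=\frac{x}{|x|}R(|x|,t)$, $\sigma^t(0)=0$, from the corresponding radial characteristic. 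One checks narrow continuity of $t\mapsto\rho(t)$, upgrades it to $W_2$-continuity on $[0,+\infty)$ using the uniform compact support, and obtains $\rho(t)=\sigma^t\#\rho_{init}$ from uniform convergence of $\sigma_\epsilon^t$ on $\mathrm{supp}\,\rho_{init}$. Finally, letting $\epsilon\to0$ in $\sigma_\epsilon^t(x)=x+\int_0^t v_\epsilon(\sigma_\epsilon^s(x),s)\,\rmd s$ identifies $\sigma$ as the flow of the field $v$ in \eqref{sol2aa}; the one delicate point --- the place, together with property (ii), where real work is needed --- is the convergence $v_\epsilon(\sigma_\epsilon^s(x),s)\to v(\sigma^s(x),s)$, which is immediate where $\sigma^s(x)=0$ (a radial field vanishes at the origin, matching $v(0,\cdot)=0$) and where $\sigma^s(x)\neq0$ follows from $\rho_\epsilon(s)\rightharpoonup\rho(s)$, the local integrability of $\nabla K$ (as $\alpha>1-d$), and the fact that $\rho(s)$ concentrates mass only at the origin, away from the singularity of $\nabla K(\sigma^s(x)-\cdot)$. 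The elementary ODE facts invoked in Steps 3--4 are those collected in the appendix.
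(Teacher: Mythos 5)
Your architecture --- regularize the kernel, solve the regularized problem globally by Picard--Lindel\"of, and extract a limit by Helly plus Arzel\`a--Ascoli --- is genuinely different from the paper's, which never mollifies $K$ but instead runs a Picard-type iteration $\rho_{n+1}(t)=\sigma_n^t\#\rho_{init}$ on the singular kernel itself, obtaining compactness from uniform Lipschitz-in-space and H\"older-in-time bounds on $v_n$ away from the origin. Much of your scaffolding is sound: the characterization of $\prob_{RD}$ by convexity of $\tau\mapsto X_\mu(\tau)^d$, the inward-pointing property from $\alpha>2-d$, and the compactness scheme in Step 4 are all correct in spirit. The problem is that the step you yourself single out as ``the analytic heart'' is not merely unproven --- it is false on a large part of the admissible range, and repairing it forces you back onto the paper's mechanism.

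Concretely, set $\Psi_s(y)=d\,y^{(d-1)/d}\psi_\alpha(y^{1/d},s)=d\,y^{(d+\alpha-2)/d}\phi(sy^{-1/d})$. Changing variables $y=r^d$ one finds $\Psi_s'(y)=(\Delta K*\delta_{S_s})(y^{1/d})=(d+\alpha-2)\,\big(|\cdot|^{\alpha-2}*\delta_{S_s}\big)(y^{1/d})$, where $\delta_{S_s}$ is the uniform unit measure on the sphere $\{|x|=s\}$. For $2-d<\alpha\le 3-d$ the restriction of $|x|^{\alpha-2}$ to a $(d-1)$-sphere passing through its singularity is not integrable, so this convolution tends to $+\infty$ as $|x|\to s$ while staying bounded near the origin (equivalently, $\phi'(r)\to-\infty$ as $r\to 1$ by Lemma~\ref{phi-is-holder}); thus $\Psi_s'$ rises from a finite value to $+\infty$ as $y\uparrow s^d$ and $\Psi_s$ cannot be concave. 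For $d=2$ this already kills the range $0<\alpha\le 1$. The correct statement is only the superposed one: $\partial_yH=-(\Delta K*\rho)(y^{1/d})$ is nondecreasing because $\Delta K*\rho$ is radially decreasing \emph{when $\rho$ is} (Lemma~\ref{layercake}; the singular ridges of the single-shell kernels at $y=s^d$ are averaged out precisely by the monotonicity of $\hat\rho$). But then the convexity of $H_\epsilon$ that drives your ODE for $w=\partial_\tau^2Y_\epsilon$ presupposes the RD property of $\rho_\epsilon(t)$ that Step 3 is trying to propagate, so the argument is circular as written; you would need to recast it as a bootstrap in time or an induction over iterates, which is exactly what the paper does. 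A secondary, fixable gap of the same flavor sits in Step 4: passing to the limit in $\sigma_\epsilon^t(x)=x+\int_0^tv_\epsilon(\sigma_\epsilon^s(x),s)\,ds$ requires uniform-in-$\epsilon$ control of $v_\epsilon$ near the moving point $\sigma^s(x)$, and the only available source of such control is again the RD structure (the paper's Lemma~\ref{keylemma} and Corollary~\ref{cor:key-estimate}), which you invoke implicitly but do not establish for the regularized flows.
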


\begin{remark}
In \eqref{sol2aa} we  could have let $v(x,t)=-(\nabla K * \rho(t))(x)$ for all $x$ with the understanding $\nabla K(0)=0$ and the convolution is taken in the sense of principle value.
\end{remark}

Theorem \ref{thm:rd} is interesting for two reasons: first it provides global existence of radially symmetric decreasing measure solutions with potential more singular than the one considered previously. In \cite{CDFLS} global existence and uniqueness of measure solutions is proven for $\alpha \ge 1$; here we restrict our attention to radially symmetric decreasing solution but we obtain global existence for $2-d<\alpha<2$. Secondly this theorem shows that radially symmetric decreasing profiles are preserved for all time when $2-d<\alpha<2$. 
Monotonicity is also preserved for the Newtonian case however in this case the problem localizes and the simpler proof is carried out in Section~\ref{Newtonian}. 

\subsection{Formula for the convolution in radial coordinates and properties of the kernel}

\

In this section we recall some known results about radially symmetric solutions of the aggregation equation and we prove additional results  needed in the following subsections.

\begin{definition}\label{definition:radial}
Let  $\mu \in \mes_R(\real^d)$.  We define $\hat{\mu} \in \mes([0,+\infty))$ to be the Borel measure on $[0,+\infty)$ which satisfies
$$
\hat{\mu}(I)= \mu(\{ x\in \real^d:  |x|\in I\})  
$$
for all $I\in {\calB}([0,+\infty))$.
\end{definition}

\begin{remark}
 It is straightforward to check that if a sequence $\mu_n \in \prob_{R}(\real^d)$ converges narrowly to $\mu \in \prob_{R}(\real^d)$ then
\begin{equation} \label{narrow}
\lim_{n\to\infty} \int_{[0,+\infty)} f(r) d\hat{\mu}_n(r) = \int_{[0,+\infty)} f(r)  d\hat{\mu}(r)
\end{equation}
 for every $f \in C_b^0([0,+\infty))$, the space of continuous and bounded  real function defined on $[0,+\infty)$.
\end{remark}

\begin{definition} \label{def:concentration}
Let  $\mu,\nu \in \prob_R(\real^d)$. We say that $\mu$ is more concentrated than $\nu$, and we write $\mu \succ \nu$,  if  ${\hat{\mu}}=T\#{\hat{\nu}}$ for some Borel map $T: [0,+\infty) \mapsto [0,+\infty)$ satisfying
$T(r)\le r$ for all $r \in [0,+\infty)$. 
\end{definition}

\begin{remark} Suppose that   $\mu, \nu \in \prob_{R}(\real^d)$ with $\nu=m \delta + f$ for some $m \ge0$ and  $f \in L^1(\real^d)$. 
 It can then be proven that
\begin{equation} \label{equivalence}
\mu \succ \nu \quad \Longleftrightarrow \quad \mu\left( \overline{B (0,r)}\right) \ge \nu\left(\overline{B(0,r)} \right) \quad \forall r>0.
\end{equation}
This equivalence is actually true for any $\mu,\nu \in \prob_{R}(\real^d)$ if one uses transport plans in Definition \ref{def:concentration} rather than only transport maps
 (see \cite{Vil03} for a definition of  transport plans). 
 The proof is a consequence of the fact that,
  given any two probability measures on $[0,+\infty)$, an optimal transport plan with respect to the quadratic cost can be explicitly constructed in term of the cumulative distributions of these two probability measures \cite[Theorem 2.18 page 74]{Vil03}. It can then be checked that if the cumulative distribution of $\hat{\mu}$ is greater than the cumulative distribution of $\hat{\nu}$,  this optimal transport plan takes elements of mass from $\hat{\nu}$ and move them toward the origin. It can also be checked that if
 $\hat{\nu}=m \delta + \hat{f}$ for some $m \ge0$ and  $\hat{f} \in L^1([0,+\infty))$, then this optimal transport plan is induced by a transport map.
 Since the equivalence  \eqref{equivalence} is not used in this paper, we omit the proof. We note that  other authors (see for example \cite[Chapter 1]{Vazquez}) use the right hand side of \eqref{equivalence}  to define the notion of concentration.
%
\end{remark}

\

For $\alpha \in (2-d,2)$ define the function $\phi: [0,+\infty) \to \real$ by
\begin{equation}
\phi(r)= \frac{1}{\omega_{d-1}}\int_{S^{d-1}} \frac{e_1-r y}{\abs{e_1-r y}^{2-\alpha}} \cdot e_1 \;  d \sigma(y),
\end{equation}
 where $S^{d-1} =\{x\in \real^d: \abs{x}=1\}$ is the unit sphere and $\omega_{d-1}$ its surface measure.
The following lemma was  proven in \cite{BLR} for the case $\alpha=1$ and  in  \cite{HJD} for general $\alpha$.
\begin{lemma} \label{dong}Let $\nabla K(x)=x {\abs{x}^{\alpha-2}}$, $\alpha \in (2-d,2)$. 
Let $\mu \in \mes_{R}(\real^d)$. Then for any $x\neq 0$, we have
\begin{equation} \label{formula}
(\nabla K * \mu ) (x)= \abs{x}^{\alpha-1} \int_0^{+\infty} \phi \p{\frac{r}{\abs{x}}} d \hat{\mu}(r)  \;  \frac{x}{\abs{x}}.
\end{equation}
Moreover, $\phi$ is continuous, strictly positive, non-increasing on $[0,+\infty)$, and
$$
\phi(0)=1, \qquad  \lim_{r \to \infty} \phi(r) r^{2-\alpha}=\frac{d+\alpha-2}{d}.
$$ 
\end{lemma}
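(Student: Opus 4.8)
The plan is to derive the integral representation \eqref{formula} by passing to polar coordinates, and then to read off every stated property of $\phi$ from that representation; the only real work lies in the monotonicity of $\phi$.

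\emph{Step 1 (the formula).} Since $\mu$ is radially symmetric, under the polar map $(r,\omega)\mapsto r\omega$ it disintegrates as $d\mu=\frac1{\omega_{d-1}}\,d\sigma(\omega)\,d\hat\mu(r)$, i.e. $\int_{\R^d}F\,d\mu=\int_0^{+\infty}\frac1{\omega_{d-1}}\int_{S^{d-1}}F(r\omega)\,d\sigma(\omega)\,d\hat\mu(r)$ for bounded Borel $F$. Fix $x\neq 0$; by rotational invariance of $\mu$ we may take $x=\abs{x}e_1$. I would write $\nabla K(x-y)=(x-y)\abs{x-y}^{\alpha-2}$, substitute $y=r\omega$, use $\abs{x}e_1-r\omega=\abs{x}(e_1-\tfrac{r}{\abs{x}}\omega)$ together with the homogeneity $\nabla K(\lambda z)=\lambda^{\alpha-1}\nabla K(z)$, and note that the components of $\int_{S^{d-1}}(e_1-s\omega)\abs{e_1-s\omega}^{\alpha-2}\,d\sigma(\omega)$ orthogonal to $e_1$ vanish by the reflection symmetry fixing the $e_1$-axis; this produces \eqref{formula}, the $e_1$-component of the inner integral being $\omega_{d-1}\,\phi(r/\abs{x})$. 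The interchange of integrals is justified by absolute convergence (the measures of interest being compactly supported): near the only possible singularity $\omega=e_1$ with $s=1$, one has $\abs{e_1-\omega}\sim\theta$ and $d\sigma\sim\theta^{d-2}\,d\theta$, so $\abs{e_1-s\omega}^{\alpha-1}$ is integrable over $S^{d-1}$ precisely when $\alpha-1>-(d-1)$, i.e. when $\alpha>2-d$, which is exactly our hypothesis. For $\phi$ itself there is extra room, since the numerator $1-s\omega_1$ vanishes to second order there; indeed $1-\omega_1=\tfrac12\abs{e_1-\omega}^2$, so $\phi(1)=\tfrac1{2\omega_{d-1}}\int_{S^{d-1}}\abs{e_1-\omega}^\alpha\,d\sigma$ is finite.

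\emph{Step 2 (elementary properties).} Setting $s=0$ gives $\phi(0)=\tfrac1{\omega_{d-1}}\int_{S^{d-1}}e_1\cdot e_1\,d\sigma=1$. On $[0,+\infty)\setminus\{1\}$ the integrand $(s,\omega)\mapsto(1-s\omega_1)\abs{e_1-s\omega}^{\alpha-2}$ is jointly continuous and locally bounded, so $\phi$ is continuous there by dominated convergence; at $s=1$ I would use $1-s\omega_1=(1-s)+\tfrac{s}{2}\abs{e_1-\omega}^2$ and $\abs{e_1-s\omega}^2=(1-s)^2+s\abs{e_1-\omega}^2$ to produce a dominating function and conclude $\phi(s)\to\phi(1)$. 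For $s\ge 2$ there is no singularity ($\abs{e_1-s\omega}\ge s-1$), and a Taylor expansion gives, uniformly in $\omega$,
$$(1-s\omega_1)\abs{e_1-s\omega}^{\alpha-2}=-s^{\alpha-1}\omega_1+s^{\alpha-2}\bigl(1+(\alpha-2)\omega_1^2\bigr)+O(s^{\alpha-3});$$
integrating over $S^{d-1}$ and using $\int_{S^{d-1}}\omega_1\,d\sigma=0$ and $\int_{S^{d-1}}\omega_1^2\,d\sigma=\omega_{d-1}/d$ yields $\phi(s)=\tfrac{d+\alpha-2}{d}\,s^{\alpha-2}+O(s^{\alpha-3})$, hence $\lim_{s\to\infty}\phi(s)\,s^{2-\alpha}=\tfrac{d+\alpha-2}{d}$.

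\emph{Step 3 (monotonicity and positivity).} Differentiating under the integral sign on $(0,1)$ and on $(1,+\infty)$, where the integrand is smooth in $s$ locally uniformly in $\omega$, and simplifying gives, with $t=\omega\cdot e_1$,
$$\omega_{d-1}\,\phi'(s)=\int_{S^{d-1}}\abs{e_1-s\omega}^{\alpha-4}\bigl(\alpha s\,t^2-(\alpha-1)(1+s^2)\,t+(\alpha-2)s\bigr)\,d\sigma(\omega).$$
One must show this is $\le 0$. For $\alpha=1$ the bracket equals $-s(1-t^2)\le 0$ and the conclusion is immediate; for general $\alpha\in(2-d,2)$ the bracket is not pointwise signed (as $\alpha\to 2$ it is $t$ times a positive quadratic), so the sign has to come from the angular average. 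Here I would pass to the one-variable integral against the weight $(1-t^2)^{(d-3)/2}$ and expand $(1-2st+s^2)^{(\alpha-2)/2}$, which for $\alpha<2$ is precisely the Gegenbauer generating function with positive index $\lambda=(2-\alpha)/2$; differentiating term by term and using orthogonality together with the decay in $s$ of the resulting coefficients gives $\phi'(s)\le 0$. In low dimension one can instead use closed forms such as $\tfrac1{\omega_{d-1}}\int_{S^{d-1}}\abs{re_1-s\omega}^\alpha\,d\sigma=\tfrac{(r+s)^{\alpha+2}-\abs{r-s}^{\alpha+2}}{2(\alpha+2)\,rs}$ for $d=3$, from which $\phi(s)$, being the $r$-derivative at $r=1$ of $\tfrac1\alpha$ times this expression, is seen to be decreasing. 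A cheaper but weaker fact comes for free: since $\Delta K=(\alpha+d-2)\abs{x}^{\alpha-2}\ge 0$ when $\alpha>2-d$, the potential of a uniform sphere is subharmonic, which forces $s\mapsto s^{-(d+\alpha-2)}\phi(s)$ to be non-increasing. Finally, once $\phi$ is non-increasing, positivity is automatic: by Step 2, $\phi(s)>0$ for all large $s$, and then $\phi(s)\ge\phi(s')>0$ for every large $s'>s$, so $\phi>0$ on $[0,+\infty)$. The one genuinely delicate point — the \emph{main obstacle} — is the sign of $\phi'$ for $\alpha$ near $2$, where $\phi$ is almost constant and monotonicity relies on a true cancellation in the spherical average rather than on a pointwise inequality; this is the computation performed in \cite{BLR} for $\alpha=1$ and in \cite{HJD} for general $\alpha$.
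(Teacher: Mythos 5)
The paper itself does not prove this lemma; it cites \cite{BLR} for $\alpha=1$ and \cite{HJD} for general $\alpha$, and later quotes (without derivation) the formula from \cite{HJD} giving $\phi'(r)=-C_{\alpha,d}\int_0^\pi r\,(\sin\theta)^d A(r,\theta)^{\alpha-4}\,d\theta$ with $C_{\alpha,d}>0$, which makes $\phi'\le 0$ manifest. Your Steps 1 and 2 --- the polar decomposition yielding \eqref{formula}, the value $\phi(0)=1$, the continuity (including the dominated-convergence argument near $s=1$), and the tail asymptotic obtained from $\int_{S^{d-1}}\omega_1^2\,d\sigma=\omega_{d-1}/d$ --- are correct and self-contained.

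Step 3 is where the genuine content lies, and as written it has a gap. You correctly compute the $\phi'$-integrand as $A^{\alpha-4}\bigl(\alpha s t^2-(\alpha-1)(1+s^2)t+(\alpha-2)s\bigr)$ and correctly note it is not pointwise signed for $\alpha\ne 1$, but the proposed Gegenbauer argument does not close: the generating function $(1-2st+s^2)^{(\alpha-2)/2}$ produces Gegenbauer polynomials $C_n^{(\lambda)}$ of index $\lambda=(2-\alpha)/2$, whereas the weight $(1-t^2)^{(d-3)/2}$ is the orthogonality weight for index $\nu=(d-2)/2$, and these agree only when $\alpha=4-d$. For $\lambda\ne\nu$ the moments $\int_{-1}^1 C_n^{(\lambda)}(t)(1-t^2)^{(d-3)/2}\,dt$ do not vanish, so orthogonality does not collapse the series; making the "decay of the coefficients" precise would require a sign analysis of Gegenbauer connection coefficients, which you do not supply, and in any case a power-series argument cannot reach past $s=1$, where $\phi'$ is singular for $\alpha\le 3-d$. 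The elementary route that actually works is one integration by parts: $\partial_u\bigl[A^{\alpha-2}(1-u^2)^{(d-1)/2}\bigr]$ integrates to zero over $[-1,1]$, which gives the identity
$$\int_{-1}^1\bigl[-s(\alpha+2d-4)u^2+(d-1)(1+s^2)u+(\alpha-2)s\bigr]A^{\alpha-4}(1-u^2)^{(d-3)/2}\,du=0,$$
and adding $\tfrac{\alpha-1}{d-1}$ times this to your bracket collapses it to $-\tfrac{(2-\alpha)(d+\alpha-2)}{d-1}\,s\,(1-u^2)\le 0$ pointwise, producing exactly the formula in \cite{HJD}. Your subharmonicity observation, that $s\mapsto s^{-(d+\alpha-2)}\phi(s)$ decreases, is correct but, as you acknowledge, strictly weaker; and your deduction of positivity from monotonicity plus the tail asymptotics is fine once monotonicity is actually in hand.
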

 Note  in particular  that $\phi \in C_0^b([0,+\infty))$ which, in view of \eqref{narrow}, will be convenient in order to pass to the limit in expressions such as \eqref{formula}. 
The positivity, monotonicity and boundedness of $\phi$ have three important consequences that can be directly read from \eqref{formula}.
\begin{corollary}
Let $\mu, \nu \in \mes_{R}(\real^d)$.  Since $\phi$ is strictly positive, we have 
\begin{equation} \label{comp1}
\mu \ge \nu  \quad  \Longrightarrow  \quad   \abs{\nabla K * \mu} \ge \abs{\nabla K * \nu } .
\end{equation}
Let  $\mu, \nu \in \prob_R(\real^d)$.  Since $\phi$ is non-increasing, we have
\begin{equation}  \label{comp2}
\mu \succ \nu  \quad  \Longrightarrow  \quad   \abs{\nabla K * \mu} \ge \abs{\nabla K * \nu } .
\end{equation}
Let  $\mu \in \prob_R(\real^d)$.  Since $0 < \phi \le 1$ we have
\begin{equation}  \label{comp33}
   \abs{\nabla K * \mu} \le  \abs{x}^{\alpha-1} .
\end{equation}
\end{corollary}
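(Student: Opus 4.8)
The plan is to read all three implications directly off the representation formula \eqref{formula}, exploiting in turn the three stated properties of $\phi$ from Lemma~\ref{dong}: strict positivity, monotonicity (non-increasing), and the two-sided bound $0<\phi\le 1$. Since for $x\neq 0$ the vector $(\nabla K*\mu)(x)$ is a scalar multiple of the unit vector $x/\abs{x}$, we have the scalar identity
$$
\abs{(\nabla K * \mu)(x)} = \abs{x}^{\alpha-1}\int_0^{+\infty} \phi\!\p{\frac{r}{\abs{x}}}\, d\hat\mu(r),
$$
and likewise for $\nu$; note the integrand is nonnegative because $\phi>0$, so each of these is a well-defined nonnegative quantity. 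Everything then reduces to comparing the two integrals for each fixed $x\neq 0$.

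For \eqref{comp1}: if $\mu\ge\nu$ as measures on $\real^d$, then $\hat\mu\ge\hat\nu$ as measures on $[0,+\infty)$ (immediate from Definition~\ref{definition:radial}, since $\hat\mu(I)=\mu(\{|x|\in I\})$ is monotone in $\mu$). Integrating the nonnegative function $r\mapsto\phi(r/\abs{x})$ against the larger measure gives the larger value, i.e. $\int\phi(r/\abs{x})\,d\hat\mu(r)\ge\int\phi(r/\abs{x})\,d\hat\nu(r)$, and multiplying by $\abs{x}^{\alpha-1}>0$ yields $\abs{\nabla K*\mu}\ge\abs{\nabla K*\nu}$ pointwise on $\real^d\setminus\{0\}$. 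For \eqref{comp2}: $\mu\succ\nu$ means $\hat\mu=T\#\hat\nu$ for some Borel $T$ with $T(r)\le r$. By the push-forward change of variables,
$$
\int_0^{+\infty}\phi\!\p{\frac{r}{\abs{x}}}d\hat\mu(r)=\int_0^{+\infty}\phi\!\p{\frac{T(s)}{\abs{x}}}d\hat\nu(s)\ge\int_0^{+\infty}\phi\!\p{\frac{s}{\abs{x}}}d\hat\nu(s),
$$
where the inequality uses $T(s)/\abs{x}\le s/\abs{x}$ together with $\phi$ non-increasing; again multiply by $\abs{x}^{\alpha-1}$. For \eqref{comp33}: since $\mu\in\prob_R(\real^d)$ has total mass $1$, we have $\hat\mu([0,+\infty))=1$, so $\int_0^{+\infty}\phi(r/\abs{x})\,d\hat\mu(r)\le\int_0^{+\infty}1\,d\hat\mu(r)=1$ using $\phi\le 1$, whence $\abs{\nabla K*\mu}\le\abs{x}^{\alpha-1}$.

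There is essentially no serious obstacle here; the statement is a direct corollary of Lemma~\ref{dong} and the proof is a matter of carefully recording the three one-line monotonicity arguments. The only minor points worth stating explicitly are: (i) that the radial projection $\mu\mapsto\hat\mu$ is monotone, which is transparent from the definition; (ii) the change-of-variables identity for push-forwards, which is exactly the displayed formula in the notation subsection; and (iii) that all integrands are finite and nonnegative so that the manipulations are legitimate — this is guaranteed by $\phi\in C_0^b([0,+\infty))$ as remarked after Lemma~\ref{dong}, since a bounded continuous function is integrable against any finite measure. I would present the argument in three short labeled paragraphs mirroring \eqref{comp1}, \eqref{comp2}, \eqref{comp33}, each no more than a few lines.
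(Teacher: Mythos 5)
Your proof is correct and follows exactly the route the paper intends: the paper states that all three implications ``can be directly read from \eqref{formula}'' and leaves the one-line monotonicity arguments implicit, which is precisely what you have written out (monotonicity of $\mu\mapsto\hat\mu$, the push-forward change of variables with $T(s)\le s$ and $\phi$ non-increasing, and $\phi\le 1$ with unit total mass).
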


In the next Lemma we prove that that $\phi$ is 
 $C^1$ for $\alpha> 3-d$, quasi-Lipschitz continuous for $\alpha=3-d$ and H\"older continuous for $2-d <\alpha < 3-d$. The lack of smoothness for  $\alpha \le 3-d$ is due to a singularity in the derivative $r=1$. 
Below we prove sharp estimates on the regularity of $\phi$; later we will only use the fact that $\phi$ is  H\"older continuous in the range of $\alpha$ considered.

\begin{lemma} \label{phi-is-holder} 
\begin{enumerate}
\item[(i)] If $\alpha \in (3-d,2)$ then $\phi\in C^1(0,+\infty)$ and $\phi'$ is bounded on $(0,+\infty)$.
\item[(ii)] If $\alpha=3-d$, then there exists a constant $C_1>0$ such that
\begin{equation} \label{holder}
\abs{\phi(r_1)-\phi(r_2)} \le C_1 \abs{r_1-r_2}(1- \log \abs{r_1-r_2}  )
\end{equation}
for all $r_1, r_2$ satisfying $\abs{r_1-r_2}<1/2$. 
\item[(iii)] If $\alpha \in (2-d, 3-d)$, then  there exists a constant $C_2>0$ such that
\begin{equation} \label{holder}
\abs{\phi(r_1)-\phi(r_2)} \le C_2 \abs{r_1-r_2}^{\alpha-(2-d)} 
\end{equation}
for all $r_1, r_2$ satisfying $\abs{r_1-r_2}<1/2$. 
\end{enumerate}
\end{lemma}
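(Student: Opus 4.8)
The plan is to reduce $\phi$ to a one–dimensional integral and to derive all three statements from sharp pointwise bounds on $\phi'(r)$ near the only singular point $r=1$. Integrating out the angular variables (projecting onto $t=\langle y,e_1\rangle$), then substituting $u=1-t$ and using $|e_1-ry|^2=(1-r)^2+2ru$, one gets, with $p:=\tfrac{2-\alpha}{2}\in(0,\tfrac d2)$, $\beta:=\tfrac{d-3}{2}>-1$ and a positive constant $\kappa_d$ normalizing $\phi(0)=1$,
\[
\phi(r)=\kappa_d\int_0^2 F(r,u)\,du,\qquad
F(r,u):=\frac{(1-r)+ru}{\big((1-r)^2+2ru\big)^{p}}\,\big(u(2-u)\big)^{\beta}.
\]
For $r$ in a compact subset of $(0,\infty)\setminus\{1\}$ the denominator is bounded away from $0$, so $\phi$ is $C^\infty$ there; from the formula $\phi'(r)\to0$ as $r\to\infty$, and $\phi'$ is bounded near $r=0$. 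Hence all three assertions are local near $r=1$, and everything comes down to estimating $|\phi'(r)|\le\kappa_d\int_0^2|\partial_rF(r,u)|\,du$ for $w:=|1-r|$ small; the region $u\in[1,2]$ is harmless because there $(1-r)^2+2ru$ stays bounded below, so only $u$ near $0$ matters.

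\emph{The key estimate.} With $N=(1-r)+ru$ and $M=(1-r)^2+2ru$ one has $\partial_rF=[(u-1)M^{-p}-2pN(u+r-1)M^{-p-1}](u(2-u))^\beta$, which for $u$ near $0$ and $r$ near $1$ is $\lesssim\big(M^{-p}+(w^2+u^2)M^{-p-1}\big)u^\beta$ (using $|N|,|u+r-1|\lesssim w+u$). Since $M\ge cu$ in this range, the piece $u^2M^{-p-1}u^\beta\le cu^{\beta+1-p}$ is integrable (the exponent equals $\tfrac{\alpha+d-3}{2}>-1$ because $\alpha>2-d$), and, splitting the integral at $u=w^2$, the piece $w^2M^{-p-1}u^\beta$ is dominated by $\int_0^\delta M^{-p}u^\beta\,du+1$. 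The decisive term is therefore $\int_0^\delta M^{-p}u^\beta\,du$, for which—splitting at $u=w^2$ and using $\beta-p=\tfrac{\alpha+d-5}{2}$—one finds
\[
\int_0^\delta\frac{u^\beta}{(w^2+u)^p}\,du\ \lesssim\
\begin{cases}
1,& \alpha>3-d,\\[1mm]
1+\log\tfrac1w,& \alpha=3-d,\\[1mm]
w^{\alpha+d-3},& 2-d<\alpha<3-d.
\end{cases}
\]
Collecting terms, for $r$ near $1$: $|\phi'(r)|\le C$ if $\alpha>3-d$; $|\phi'(r)|\le C(1+\log\tfrac1{|1-r|})$ if $\alpha=3-d$; and $|\phi'(r)|\le C|1-r|^{\alpha+d-3}$ if $2-d<\alpha<3-d$. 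Together with the boundedness of $\phi'$ away from $1$, these hold for all $r>0$ after replacing $\log$ by $\log^+$ and $|1-r|^{\alpha+d-3}$ by $1+|1-r|^{\alpha+d-3}$.

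\emph{Conclusion of (ii), (iii), (i).} For (ii) and (iii) I would write $\phi(r_1)-\phi(r_2)=\int_{r_2}^{r_1}\phi'(s)\,ds$ and invoke the elementary facts that for any interval $I$ of length $h<\tfrac12$—the extremal case being $I$ centered at $1$, as both integrands are even and unimodal in $s-1$—one has $\int_I\log^+\tfrac1{|1-s|}\,ds\le h(1+\log2-\log h)$ and $\int_I|1-s|^{\gamma-1}\,ds\le\tfrac{2^{1-\gamma}}{\gamma}h^{\gamma}$, where $\gamma:=\alpha-(2-d)=\alpha+d-2\in(0,1)$, so that $\gamma-1=\alpha+d-3$ is exactly the exponent above; with $h\le h^{\gamma}$ for $h<1$ this yields $|\phi(r_1)-\phi(r_2)|\le C_1|r_1-r_2|(1-\log|r_1-r_2|)$ when $\alpha=3-d$ and $\le C_2|r_1-r_2|^{\gamma}$ when $2-d<\alpha<3-d$. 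For (i), the bound $|\phi'|\le C$ near $1$ together with $C^\infty$-ness elsewhere gives $\phi\in C^1((0,\infty)\setminus\{1\})$ with $\phi'$ bounded; it then remains to prove $\phi'$ extends continuously across $r=1$, i.e. $\lim_{r\to1^{\pm}}\phi'(r)=\kappa_d\int_0^2\partial_rF(1,u)\,du$ (the right side is finite precisely because $\alpha>3-d$). For this I would observe that the part of $\int_0^2\partial_rF(r,u)\,du$ over $u<w^2$ is $O(w^{\alpha+d-3})\to0$, while on the complement one may pass to the limit by dominated convergence after an $\varepsilon/3$ splitting of $[0,2]$; the mean value theorem then upgrades this to differentiability at $r=1$ with $\phi'$ continuous there.

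\emph{Main obstacle.} The only genuinely delicate step is the last one—continuity of $\phi'$ at $r=1$ in part (i). No $r$-independent integrable majorant for $\partial_rF(r,\cdot)$ exists (the term $w^2M^{-p-1}u^\beta$ has only the non-integrable majorant $w^2u^{\beta-p-1}$), so one must exploit the scale separation $M\approx w^2$ for $u\lesssim w^2$ versus $M\approx u$ for $u\gtrsim w^2$, which is the same splitting at $u=w^2$ used throughout. Beyond that, the proof is careful bookkeeping of one-dimensional singular integrals; all exponents close up because $2-d<\alpha<2$ forces $\beta>-1$ and $\beta+1-p>-1$, while $\beta-p$ crosses $-1$ exactly at $\alpha=3-d$, which is what produces the three regimes.
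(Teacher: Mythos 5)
Your proof is correct and is essentially the paper's argument: both reduce to a one--dimensional integral for $\phi'$ with a singularity only at $r=1$, establish the same three-regime pointwise bounds $|\phi'(1+h)|\lesssim 1$, $\log\frac{1}{|h|}$, or $|h|^{\alpha+d-3}$ by splitting the integral at the scale of $|r-1|$ (your cut at $u=w^2$ is the paper's cut at $\theta=h$, since $u=1-\cos\theta\approx\theta^2/2$), and then integrate these bounds to get the modulus of continuity. The only substantive inaccuracy is your ``main obstacle'': an $r$-independent integrable majorant for $\partial_r F(r,\cdot)$ \emph{does} exist when $\alpha>3-d$, since $M=(1-r)^2+2ru\ge u(2-u)$ (the analogue of the paper's $A(r,\theta)\ge|\sin\theta|$) gives $w^2M^{-p-1}u^\beta\le\frac{w^2}{w^2+2ru}M^{-p}u^\beta\lesssim u^{\beta-p}$, which is integrable precisely in that range; so part (i), including continuity of $\phi'$ across $r=1$, follows from dominated convergence directly, without your $\varepsilon/3$ splitting.
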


\begin{proof}
 In \cite[Lemma 4.4]{HJD}  it was proven that  $\phi$ is differentiable on $[0,1)\cup (1+\infty)$ and that, for $r \neq 1$:
\begin{gather} \label{gagi}
\phi' (r)= -C_{\alpha,d} \int_0^\pi \frac{r (\sin \theta)^d}{A(r,\theta)^{4-\alpha }}d \theta \\
\text{where } A(r, \theta) = (1 + r^2 -2 r \cos \theta)^{1/2}  \label{ding}  \\
\text{and }   C_{\alpha,d}=\frac{\omega_{d-2}(2- \alpha)(d+\alpha-2)}{\omega_{d-1} (d-1)}>0.
\end{gather}
Note first that for fixed $r$ the function $\theta \mapsto A(r, \theta)$ reaches its minimum at $\theta=0$. So $A(r,\theta) \ge \abs{r-1}$ and one can easily see from \eqref{gagi}   that for all the $\alpha$ considered, $\phi'(r)$ is bounded on $[0,1/2] \cup [3/2,+\infty)$. It is therefore enough to prove the statements of the Lemma only on the interval $(1/2,3/2)$.

We first prove (i).  Note that for fixed  $\theta$ the function $r \mapsto A(r, \theta)$ reaches its minimum at $r=\cos \theta$ and therefore $A(r,\theta) \ge \abs{\sin \theta} $. As a concequence we have the following estimate for the integrand in \eqref{gagi}:
\begin{equation}
 \frac{r (\sin \theta)^d}{A(r,\theta)^{4-\alpha }} \le \frac{3/2}{(\sin \theta)^{4-\alpha-d}} \qquad \text{ for all $r \in \left(\frac{1}{2},\frac{3}{2}\right)$ and $\theta \in [0,\pi]$.}\label{john}
\end{equation}
Since the right hand side of (\ref{john}) 
is integrable if $\alpha>3-d$ we obtain (i) by the dominated convergence theorem.

We now turn to the proof of (ii) and (iii).  We first derive the estimates
\begin{align}
\abs{\phi'(1+h)} \le  \frac{C}{\abs{h}^{3-\alpha-d}} &   \qquad \text{ if } \alpha \in (2-d,3-d) 
  \label{vovo}\\
\abs{\phi'(1+h)} \le - C \log \abs{h}  &  \qquad \text{ if } \alpha= 3-d \label{wowo}
\end{align}
for all $h \in (-1/2,1/2)$, $h \neq 0$. The constant $C>0$ depends on $\alpha$ but not on $h$.
We will  prove \eqref{vovo}  only for $h \in (0,1/2)$. The proof for  $h \in (-1/2,0)$ is precisely the same.  Write
\begin{multline*}
\abs{\phi' (1+h)}
= C_{\alpha,d} \left( \int_h^{\pi-h}\frac{ (1+h) (\sin \theta)^d}{A(1+h,\theta)^{4-\alpha }} d \theta   +
 \int_{[0,h] \cup [\pi-h, \pi] }\frac{ (1+h) (\sin \theta)^d}{A(1+h,\theta)^{4-\alpha }} d \theta \right)
\end{multline*}
and let $(I)$ be the first integral and $(II)$ the second one. Using the fact that $A(r,\theta) \ge \sin \theta$ we find that
\begin{multline*}
 (I) \le \frac{3}{2} \int_h^{\pi-h}\frac{ 1}{(\sin \theta)^{4-\alpha-d}}  d \theta 
 \le 3  \int_h^{\pi/2}\frac{ 1}{(\sin \theta)^{4-\alpha-d}}  d \theta \\
 \le 3  \int_h^{\pi/2}\frac{ 1}{( \frac{2}{\pi} \theta)^{4-\alpha-d}}  d \theta 
 \le \frac{C}{h^{3-\alpha-d}}
\end{multline*}
where we have used the symmetry of $\sin \theta$ around $\theta=\pi/2$ and the fact that $\sin \theta \ge (2/ \pi) \theta$ on the interval $[0, \pi/2]$. To estimate $(II)$ we use  the fact that $A(1+h, \theta) \ge h$:
\begin{multline*}
(II) \le \frac{3/2}{h^{4-\alpha}} \int_{[0,h] \cup [\pi-h, \pi] }  (\sin \theta)^d d\theta \le \frac{3}{h^{4-\alpha}} \int_0^h (\sin \theta)^d d\theta \\
\le \frac{3}{h^{4-\alpha}} \int_0^h  \theta^d d\theta   \le \frac{C}{h^{3-\alpha-d}}.
\end{multline*}
This concludes the proof of \eqref{vovo}. The proof of \eqref{wowo} is similar. Let 
\begin{equation}
\omega(r)= \begin{cases}
\frac{C}{\alpha-(2-d)}r^{\alpha-(2-d)} &  \text{ if }\alpha \in (2-d,3-d)\\
C r (1-\log r) &  \text{ if }\alpha =3-d
\end{cases}
\end{equation}
and note that $\omega$ is the antiderivative of the right hand side of \eqref{vovo} and \eqref{wowo}. Note also that $\omega$ is a nonnegative, increasing, concave function on $[0,1]$ which is equal to $0$ at $r=0$. 
To conclude the proof  of (ii) and (iii) we need to show that $\omega$ is the modulus of continuity of $\phi$ on the interval $(1/2,3/2)$, that is 
 \begin{equation} \label{modulus}
 \abs{\phi (r_1) -\phi(r_2)} \le \omega(\abs{r_1-r_2}) \qquad \text{for all $r_1,r_2 \in \left(\frac{1}{2},\frac{3}{2}\right) $}. 
 \end{equation}
Since $\phi'$ is negative, from  \eqref{vovo} and \eqref{wowo} we have that 
$
0\le -\phi'(1+h) \le \omega'(h). 
$ for $h>0$.
Integrating this inequality on $[h_1,h_2]$  and using the fact that $\omega(h_2)-\omega(h_1)\le \omega(h_2-h_1)-\omega(0)= \omega(h_2-h_1)$ due to the concavity of $\omega$, we obtain that
\begin{equation*} \label{vish1}
0 \le \phi(1+h_1)-\phi(1+h_2) \le \omega(h_2-h_1) \qquad  \text{ for all } 0 \le h_1 < h_2 < 1/2.
\end{equation*}
This prove that \eqref{modulus} holds for all  $r_1, r_2 \in [1,3/2)$. A similar proof leads to the same result on the interval
$(1/2,1]$. To obtain the result on the full interval $(1/2,3/2)$, let $h_1,h_2 \in (0,1/2)$ and write 
\begin{multline*}
0 \le \phi(1-h_1)-\phi(1+h_2)=  \phi(1-h_1)-\phi(1)+\phi(1)-\phi(1+h_2) \\
 \le \omega(h_1)+ \omega(h_2) \label{tata2}   \le  2 \omega(h_1+h_2).
\end{multline*}
 To obtain the last inequality
we have used the fact that $\omega$ is increasing.
  \end{proof}
\subsection{Regularity of the velocity field}

\

We now study the regularity of a the velocity field associated with a radially symmetric decreasing measure solution of the aggregation equation.
\noindent Recall that
\begin{equation}
A_\epsilon:= \{ x\in \real^d: \epsilon < |x| < 1\}.
\end{equation}  
Obviously $A_0= B(0,1)\bs \{0\}$.
\begin{proposition}  \label{regularity} Let  $\rho\in C_w([0,+\infty), \prob_{RD}(\real^d))$ and assume that $\text{supp}(\rho(t)) \subset B(0,1)$ for all $t\ge 0$. Let $\nabla K(x)=x {\abs{x}^{\alpha-2}}$, $\alpha \in (2-d,2)$.
 Then 
 the velocity field $v(x,t)$ defined by
 \begin{equation}
 v(x,t)=\begin{cases}-(\nabla K * \rho(t))(x),& x\neq 0\\
  0, & x=0
 \end{cases}
 \end{equation}
satisfies:
 \begin{enumerate}
\item[(P0)] $v(x,t)$ is continuous on $A_0 \times [0,+\infty)$ for all $t \ge 0$.
\item[(P1)] For every $t \ge 0$, the function $x\mapsto  v(x,t)$ is continuously differentiable on $A_0$. 
\item[(P2)] Given $\epsilon>0$ there exists $C>0$ such that $\abs{\nabla v(x,t)}< C$ for all $(x,t) \in A_\epsilon \times [0,+\infty)$.
\item[(P3)] Given    $\epsilon>0$ and $\eta>0$,  there exists $\delta>0$ such that
$$   \abs{\nabla v(y,t)-\nabla v(x,t)} \le \eta $$
for all $x,y \in A_\epsilon$ satifying $\abs{x-y}< \delta$ and for all $t \ge 0$.
\end{enumerate} 
\end{proposition}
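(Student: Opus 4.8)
The plan is to build everything on the representation formula \eqref{formula} of Lemma~\ref{dong}. Since $\operatorname{supp}\rho(t)\subset B(0,1)$, the pushed-forward measure $\hat\rho_t$ is supported in $[0,1]$, and writing $\rho(t)=m(t)\delta+g(t)$ with $g(t)$ radial, nonnegative and decreasing, it splits as $\hat\rho_t=m(t)\delta_0+h_t(r)\,dr$ with $h_t\ge0$ supported in $[0,1]$ and $\|h_t\|_{L^1}\le1$. Using $\phi(0)=1$, \eqref{formula} reads, for $x\neq0$,
\[ v(x,t)=-\,|x|^{\alpha-2}x\,F(x,t),\qquad F(x,t)=m(t)+\int_0^1\phi\Big(\tfrac{r}{|x|}\Big)h_t(r)\,dr . \]
I would also record at the outset the elementary fact that a nonnegative decreasing radial $L^1$ density is uniformly bounded away from the origin: with $g(t)=\tilde g_t(|x|)$, $\tilde g_t$ decreasing, $\tilde g_t(a)\,|B_a\setminus B_{a/2}|\le\|g(t)\|_{L^1}\le1$ forces $h_t(r)=\omega_{d-1}\tilde g_t(r)r^{d-1}\le C_d/r$, so $\|h_t\|_{L^\infty(\epsilon/2,1)}\le C_d/\epsilon$ \emph{uniformly in $t$}. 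All constants below depend only on $d,\alpha$ and, for (P2)--(P3), on $\epsilon$.

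(P0) follows because $x\mapsto|x|^{\alpha-2}x$ is continuous on $A_0$, because near any $x_0\neq0$ the argument $r/|x|$ stays in the compact set $[0,2/|x_0|]$ on which $\phi$ is uniformly continuous, giving continuity of $F(\cdot,t)$ at $x_0$ uniformly in $t$, and because $t\mapsto F(x,t)$ is continuous by \eqref{narrow} applied to the test function $r\mapsto\phi(r/|x|)\in C_b^0$; a triangle inequality in $(x,t)$ then gives joint continuity. For (P1): the atom contributes only the constant $m(t)$ to $F$, so one only has to differentiate $\int_0^1\phi(r/|x|)h_t(r)\,dr$ in $x$; this depends on $x$ just through $s=|x|$, so it suffices to show $I(s):=\int_0^1\phi(r/s)h_t(r)\,dr$ is $C^1$ on $(\epsilon,1)$ with $I'(s)=-\int_0^1\phi'(r/s)\,rs^{-2}h_t(r)\,dr$, $\phi'$ being the a.e.\ derivative of \eqref{gagi}. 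The structural input that makes this work is that $\phi$ is absolutely continuous on $(0,\infty)$ with $\phi'\in L^1_{\mathrm{loc}}$: its only singularity is at $1$, where Lemma~\ref{phi-is-holder} gives $|\phi'(u)|\lesssim|u-1|^{-(3-\alpha-d)}$ with exponent $3-\alpha-d<1$ since $\alpha>2-d$. I would split $\int_0^1=\int_0^{\epsilon/2}+\int_{\epsilon/2}^1$: on $[0,\epsilon/2]$ the argument $r/s\le1/2$ keeps $\phi'$ bounded and continuous, so one differentiates under the integral in the classical way; on $[\epsilon/2,1]$, where $h_t\le C_d/\epsilon$, I would use the identity $\phi(r/(s+k))-\phi(r/s)=\int_{r/s}^{r/(s+k)}\phi'(u)\,du$, divide by $k$, and apply Fubini to rewrite the difference quotient as an integral in $u$ whose integrand is dominated uniformly in $k$ by $|\phi'(u)|\,(C_d/\epsilon)\,u$ on a fixed bounded $u$-interval — an $L^1$ majorant precisely because $\phi'\in L^1_{\mathrm{loc}}$ — so that dominated convergence delivers the claimed formula for $I'$. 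Continuity of $I'$ I would obtain by writing $I'(s)=-s^{-1}\int_0^1\chi(r/s)h_t(r)\,dr$ with $\chi(u):=u\phi'(u)\in L^1_{\mathrm{loc}}$, approximating $\chi$ in $L^1$ by continuous $\chi_n$ (for which $s\mapsto\int\chi_n(r/s)h_t(r)\,dr$ is continuous by dominated convergence) and bounding the approximation error on $[\epsilon/2,1]$ by $s\,\|h_t\|_{L^\infty(\epsilon/2,1)}\|\chi-\chi_n\|_{L^1}$ — uniform for $s$ in a compact subinterval — the $[0,\epsilon/2]$ part being controlled by uniform continuity of $\chi$ on $[0,1/2]$; a uniform limit of continuous functions is continuous. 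Composing with the smooth prefactor proves (P1).

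For (P2) I would apply the product rule to $-|x|^{\alpha-2}x\,F(x,t)$: the prefactor and its derivative are bounded on $\overline{A_\epsilon}$, $|F|\le1$, and the remaining term is controlled by $\int_0^1|\phi'(r/|x|)|\,r|x|^{-2}h_t(r)\,dr\le C_\epsilon$, which one gets by the same split at $\epsilon/2$ as above (on $[\epsilon/2,1]$, $\le C_\epsilon\int_{\epsilon/2}^1|\phi'(r/|x|)|\,dr=C_\epsilon|x|\int_0^{1/|x|}|\phi'|\le C_\epsilon\|\phi'\|_{L^1(0,2/\epsilon)}$; on $[0,\epsilon/2]$, $\le C_\epsilon\|h_t\|_{L^1}$), all uniform in $t$. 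For (P3), split $\nabla v$ into prefactor-type terms and the singular integral $s\mapsto s^{-1}\int_0^1\chi(r/s)h_t(r)\,dr$. For the former, $|F(x,t)-F(y,t)|\le C_\epsilon|x-y|^{\alpha-(2-d)}$ uniformly in $t$ because $\phi$ is uniformly H\"older on $[0,2/\epsilon]$ (Lemma~\ref{phi-is-holder}) and $\hat\rho_t$ is a probability measure, which together with Lipschitz continuity of the smooth prefactor on $\overline{A_\epsilon}$ gives equicontinuity. For the singular integral, the $[0,\epsilon/2]$ part is handled by the modulus of continuity of $\chi$ on $[0,1/2]$ against $\|h_t\|_{L^1}\le1$; for the $[\epsilon/2,1]$ part, given $\eta$ I would choose $n$ with $(C_d/\epsilon)\|\chi-\chi_n\|_{L^1}<\eta/3$ so the approximation error is $<\eta/3$ uniformly in $t$ — this is where uniform boundedness of $h_t$ away from the origin enters — and then choose $\delta=\delta(\eta,\epsilon,n)$, independent of $t$, from the modulus of continuity of $s\mapsto\int\chi_n(r/s)h_t(r)\,dr$; this gives (P3).

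The step I expect to be the main obstacle is exactly the differentiation under the integral in (P1), and its uniform-in-$t$ refinement in (P3). Since $\phi'$ is genuinely unbounded at argument $1$ when $\alpha\le3-d$, and $h_t$ is only $L^1$, a naive pointwise estimate of the difference quotient fails (it blows up like a negative power of the increment), so one cannot differentiate by brute force. What makes it go through are two facts valid on the \emph{entire} range $2-d<\alpha<2$: first, the singularity of $\phi'$ is integrable — the inequality $3-\alpha-d<1$ — which is exactly what lets the identity $\phi(r/(s+k))-\phi(r/s)=\int\phi'$ combined with Fubini produce a $k$-uniform $L^1$ majorant; and second, the monotonicity of $g(t)$ forces $h_t$ to be uniformly (in $t$) bounded away from the origin, which is what keeps that majorant, and the $L^1$-approximation errors in the continuity arguments, uniform in $t$. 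The atom at the origin never causes trouble because $\phi(0)=1$ turns it into the additive constant $m(t)$ in $F$.
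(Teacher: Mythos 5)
Your proof is correct, but it follows a different route from the paper for the differentiability statements (P1)--(P3). The paper never differentiates the radial representation formula: it observes that $K_{x_ix_j}\in C(\real^d\setminus\{0\})\cap L^1_{loc}(\real^d)$ and invokes Corollary~\ref{cor:key-estimate}, whose engine (Lemma~\ref{keylemma}) splits the Cartesian convolution $\int\Phi(x-y)g(y)\,dy$ over $B_{\epsilon/2}$ (where $\Phi(x-\cdot)$ is bounded) and $A_{\epsilon/2}$ (where monotonicity forces $\sup_{A_{\epsilon/2}}g\le \|g\|_{L^1(B_{\epsilon/2})}/|B_{\epsilon/2}|$, so one pays $\|\Phi\|_{L^1}$). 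You instead stay entirely in the one-dimensional radial variable, differentiating $s\mapsto\int\phi(r/s)\,d\hat\rho_t(r)$ directly; this obliges you to justify differentiation under the integral against the singular kernel $\phi'$ via Fubini and a $k$-uniform $L^1$ majorant, and to run an $L^1$-approximation of $u\phi'(u)$ by continuous functions to get (equi)continuity of the derivative. The two arguments are morally parallel --- both rest on (i) local integrability plus continuity off the singularity of the second-derivative kernel (your exponent condition $3-\alpha-d<1$ is exactly what makes $K_{x_ix_j}\in L^1_{loc}$ in the paper's formulation) and (ii) the pointwise bound on the decreasing density away from the origin, your $h_t(r)\le C_d/r$ being the 1D avatar of the paper's volume estimate. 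The paper's route is shorter and modular (a single lemma covers any kernel with $D^2K\in C(\real^d\setminus\{0\})\cap L^1_{loc}$); yours is more computational but self-contained within Lemma~\ref{dong} and makes the quantitative role of $\alpha>2-d$ explicit. Only cosmetic slips: in your continuity-of-$I'$ estimate the prefactor $s^{-1}$ cancels the Jacobian $s$ from the substitution $u=r/s$, so the error bound should read $\|h_t\|_{L^\infty(\epsilon/2,1)}\|\chi-\chi_n\|_{L^1}$ rather than carrying an extra factor of $s$; this is harmless since $s\le 1$.
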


\

 The notation $\nabla v(x,t)$ stands for the derivative of $v$ with respect to $x$.
In order to prove this proposition,  we will need the following lemma and its corollary.  
\begin{lemma} \label{keylemma}
Suppose $g\in L^1(\real^d)$ is nonnegative, radially symmetric decreasing, and supported in $B(0,1)$.  Suppose also that  $\Phi \in C(\real^d \backslash  \{0\}) \cap L^1_{loc}(\real^d)$. Then $\Phi*g \in C(\real^d \backslash  \{0\}) \cap  L^1_{loc}(\real^d)$ and 
\begin{equation}
\label{bibi}
\|\Phi*g\|_{L^\infty(A_{\epsilon})}  \le \|g\|_{L^1(B_{\epsilon/2})}  \Big\{
\sup_{\epsilon/2<|y|<2}|\Phi(y)|+ \frac{\|\Phi\|_{L^1(B_2)}}{|B_{\epsilon/2}|}
\Big\}.
\end{equation}
\end{lemma}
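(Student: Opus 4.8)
The plan is to prove Lemma~\ref{keylemma} by a direct splitting of the convolution integral into a ``near'' part and a ``far'' part, exploiting the radial monotonicity of $g$ only through the elementary fact that the value of a radially decreasing $L^1$ function on a sphere of radius $s$ is controlled by its average over the ball of radius $s$.

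\textbf{Continuity and local integrability of $\Phi*g$.} First I would dispose of the easy structural claims. Since $\Phi \in L^1_{loc}(\real^d)$ and $g\in L^1(\real^d)$ with compact support, Young's inequality (in the local form) gives $\Phi*g\in L^1_{loc}(\real^d)$; indeed for any ball $B_R$, $\|\Phi*g\|_{L^1(B_R)}\le \|\Phi\|_{L^1(B_{R+1})}\|g\|_{L^1}$ because $g$ is supported in $B(0,1)$. For continuity on $\real^d\setminus\{0\}$, fix $x\neq 0$ and a small ball around $x$ avoiding the origin; split $g = g\mathbf{1}_{B(0,|x|/4)} + g\mathbf{1}_{\real^d\setminus B(0,|x|/4)}$. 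The convolution of $\Phi$ with the second (bounded-away-from-a-neighborhood-of-the-singularity) piece is continuous near $x$ because $\Phi$ is continuous and bounded on the relevant compact annular region and $g$ is $L^1$; the convolution with the first piece is continuous near $x$ because there $\Phi$ restricted to the relevant region is in $L^1_{loc}$ and translation is continuous in $L^1$. I expect this part to be routine.

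\textbf{The pointwise bound.} Fix $x\in A_\epsilon$, so $\epsilon<|x|<1$. Write
\[
(\Phi*g)(x) = \int_{\real^d} \Phi(x-y)\, g(y)\, dy = \underbrace{\int_{|x-y|<\epsilon/2}}_{(\mathrm{near})} + \underbrace{\int_{|x-y|\ge \epsilon/2}}_{(\mathrm{far})}.
\]
For the far part, when $|x-y|\ge\epsilon/2$ and $y\in B(0,1)$ (the only place $g$ lives) we have $\epsilon/2\le |x-y|\le |x|+|y|<2$, so $|\Phi(x-y)|\le \sup_{\epsilon/2<|z|<2}|\Phi(z)|$, and the far integral is bounded by $\|g\|_{L^1}\sup_{\epsilon/2<|z|<2}|\Phi|$. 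But one needs the stronger constant $\|g\|_{L^1(B_{\epsilon/2})}$ rather than $\|g\|_{L^1}$ — this is exactly where monotonicity enters, and I address it below. For the near part, $\int_{|x-y|<\epsilon/2}|\Phi(x-y)|g(y)\,dy \le \|\Phi\|_{L^1(B_{\epsilon/2})}\cdot \operatorname*{ess\,sup}_{|x-y|<\epsilon/2} g(y)$; and since $x\in A_\epsilon$, every such $y$ satisfies $|y|>|x|-\epsilon/2>\epsilon/2$, so $g(y)\le g$ on the sphere $\{|z|=\epsilon/2\}$... wait, $g$ is \emph{decreasing}, so on $\{|y|>\epsilon/2\}$ we have $g(y)\le \operatorname*{ess\,sup}_{|z|=\epsilon/2} g(z)$. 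The key monotonicity estimate is then: for a radially decreasing $g$, the value on the sphere of radius $\epsilon/2$ is at most the average over $B_{\epsilon/2}$, i.e.
\[
\operatorname*{ess\,sup}_{|z|\ge \epsilon/2} g(z) \;\le\; \frac{1}{|B_{\epsilon/2}|}\int_{B_{\epsilon/2}} g \;=\; \frac{\|g\|_{L^1(B_{\epsilon/2})}}{|B_{\epsilon/2}|},
\]
because $g$ restricted to $B_{\epsilon/2}$ is everywhere $\ge$ that sphere value. This handles the near part and gives the $\|\Phi\|_{L^1(B_2)}/|B_{\epsilon/2}|$ term (after enlarging $B_{\epsilon/2}$ to $B_2$ in the $\Phi$-norm, which only weakens the bound). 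For the far part, I similarly bound $g(y)$ on the support region $\{|y|>\epsilon/2\}$... no: here $g$ appears linearly under an $L^1$ integral against $|\Phi|$, so I instead split once more, $\{|x-y|\ge\epsilon/2,\ |y|\le \epsilon/2\}$ where I use $\sup_{\epsilon/2<|z|<2}|\Phi|$ against $\|g\|_{L^1(B_{\epsilon/2})}$ directly, and $\{|x-y|\ge\epsilon/2,\ |y|>\epsilon/2\}$ where I again bound $g(y)\le \|g\|_{L^1(B_{\epsilon/2})}/|B_{\epsilon/2}|$ pointwise and integrate $|\Phi|$ over $B_2$, absorbing into the second term. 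Summing the pieces yields exactly \eqref{bibi}.

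\textbf{Main obstacle.} The only non-mechanical point is getting the constant to be $\|g\|_{L^1(B_{\epsilon/2})}$ — the mass of $g$ in the \emph{small} ball — rather than the full mass $\|g\|_{L^1}$. This is precisely what the decreasing hypothesis buys: outside $B_{\epsilon/2}$, the density is pointwise dominated by its average over $B_{\epsilon/2}$, so any occurrence of $g(y)$ with $|y|>\epsilon/2$ can be replaced by $|B_{\epsilon/2}|^{-1}\|g\|_{L^1(B_{\epsilon/2})}$, and any occurrence of $g$ integrated over $B_{\epsilon/2}$ is literally $\|g\|_{L^1(B_{\epsilon/2})}$. I would make sure the bookkeeping of which region contributes the $\sup|\Phi|$ factor and which contributes the $\|\Phi\|_{L^1}$ factor is done cleanly so the two braces in \eqref{bibi} come out as stated; that organizational step, not any hard analysis, is where care is needed.
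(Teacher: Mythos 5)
Your proposal is correct and, after your sub-split of the far region, reduces to exactly the paper's argument: the paper splits directly on $|y|\lessgtr\epsilon/2$ (i.e.\ $y\in B_{\epsilon/2}$ versus $y\in A_{\epsilon/2}$), uses $\sup_{\epsilon/2<|z|<2}|\Phi|$ against $\|g\|_{L^1(B_{\epsilon/2})}$ on the first piece and $\|\Phi\|_{L^1(B_2)}$ against $\sup_{A_{\epsilon/2}}g$ on the second, and closes with the same key monotonicity observation that $\sup_{|z|\ge\epsilon/2}g\le \|g\|_{L^1(B_{\epsilon/2})}/|B_{\epsilon/2}|$. The only cosmetic difference is in the continuity step, where the paper simply applies the derived estimate to $\Phi(\cdot+h)-\Phi(\cdot)$ in place of $\Phi$ rather than re-splitting $g$.
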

\begin{proof}
 Since
 $\Phi$ belongs to $L^1_{loc}$, by Young's inequality  $\Phi*g$ is also in $L^1_{loc}$. We now prove estimate \eqref{bibi}.
Fix $x$ in $A_\epsilon$.
\begin{align}
\abs{(\Phi*g)(x)} &\le \int_{B_{\epsilon/2}} \abs{\Phi(x-y)}   g(y) dy + \int _{A_{\epsilon/2} } \abs{\Phi(x-y)}  g(y) dy\\
\label{xavier}
& \le     \Big( \sup_{\epsilon/2<|y|<1+\epsilon/2}|\Phi(y)|\Big) \; \|g\|_{L^1(B_{\epsilon/2})} 
\\ & \hspace{5cm}+ \Big( \sup_{y \in A_{\epsilon/2}} g(y) \Big) \|\Phi\|_{L^1(B_2)} \label{xavier2}.
\end{align}
Since $g$ is radially symmetric decreasing, we have that $g(z) \ge  \sup_{y \in A_{\epsilon/2}} g(y)$ for almost every $z\in B_{\epsilon/2}$. Therefore we obtain
$$
\| g\|_{L^1(B_{\epsilon/2})} \ge  |B_{\epsilon/2} | \;\;\sup_{y \in A_{\epsilon/2}} g(y)
$$
which, combined with \eqref{xavier2}, leads to the desired estimate. We now prove that $\Phi*g$ is continuous.  Reasoning as above we obtain that if $x\in A_\epsilon$ then
 \begin{multline} \label{coco}
\abs{(\Phi*g)(x+h)-(\Phi*g)(x)}   \le   \|g\|_{L^1(B_{\epsilon/2})}  \\   \Big\{
\sup_{\epsilon/2<|y|<2}|\Phi(y+h)-\Phi(y)|+ \frac{\|\Phi(\cdot +h ) - \Phi(\cdot) \|_{L^1(B_2)}}{|B_{\epsilon/2}|}
\Big\}.
\end{multline}
We conclude that $\Phi* g$ is continuous using
 the fact that $\Phi$ is uniformly continuous on compact sets which do not contain the origin and the continuity of the translation $h \to  \Phi(\cdot+h)$ from $\real^d$ to $L^1(B_2)$.
\end{proof}

Recall that $\prob_{RD}(\real^d)$ is the space of probability measure $\mu$ which can be written
$$
\mu = m \delta + g
$$
for some $m \ge 0$ and for some nonnegative,  radially symmetric decreasing function $g\in L^1(\real^d)$.  From the previous Lemma we directly obtain the following corollary: 
\begin{corollary} \label{cor:key-estimate}
Let $K: \real^d \to \real$ be a potential such that $K_{x_i x_j}\in C(\real^d \backslash  \{0\}) \cap L^1_{loc}(\real^d)$. Then the family of functions $\{K_{x_i x_j}*\mu : {\mu \in \prob_{RD}(\real^d)}\}$ is uniformly bounded and equicontinuous on every annulus  $A_\epsilon$, $\epsilon>0$. To be more precise we have: 
 \begin{equation}
\label{bibibi}
\|K_{x_i x_j}*\mu\|_{L^\infty(A_{\epsilon})} \le 
\sup_{\epsilon/2<|y|<2}|K_{x_i x_j}(y)|+ \frac{\|K_{x_i x_j}\|_{L^1(B_2)}}{|B_{\epsilon/2}|}
 \qquad \text{ for all $\mu \in  \prob_{RD}(\real^d)$ }
\end{equation}
And also: given $\epsilon>0$ and $\eta>0$, there exists $\delta>0$ such that 
\begin{equation} \label{bibibi2}
 \abs{(K_{x_i x_j}*\mu) (x) -  (K_{x_i x_j}*\mu) (y)} \le \eta 
 \end{equation}
for all $x,y \in A_\epsilon$ satisfying $\abs{x-y}< \delta$ and for all $\mu \in  \prob_{RD}(\real^d)$.
\end{corollary}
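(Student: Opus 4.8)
The plan is to split an arbitrary $\mu\in\prob_{RD}(\real^d)$ into its singular and absolutely continuous parts and estimate each separately, the absolutely continuous part being handled directly by Lemma \ref{keylemma}. Write $\mu=m\delta+g$ with $m\ge 0$, $g\in L^1(\real^d)$ nonnegative radially symmetric decreasing and supported in $B(0,1)$, and $m+\|g\|_{L^1(\real^d)}=1$. Since $K_{x_ix_j}*(m\delta)=m\,K_{x_ix_j}$ and $A_\epsilon\subset\{\,\epsilon/2<|y|<2\,\}$, on $A_\epsilon$ the singular part is pointwise bounded by $m\sup_{\epsilon/2<|y|<2}|K_{x_ix_j}(y)|$. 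For the absolutely continuous part, apply Lemma \ref{keylemma} with $\Phi=K_{x_ix_j}$ (the hypotheses $\Phi\in C(\real^d\setminus\{0\})\cap L^1_{loc}(\real^d)$ are exactly the standing assumptions of the Corollary); this gives on $A_\epsilon$ the bound $\|g\|_{L^1(B_{\epsilon/2})}\,(A+B)$, where $A:=\sup_{\epsilon/2<|y|<2}|K_{x_ix_j}(y)|$ and $B:=\|K_{x_ix_j}\|_{L^1(B_2)}/|B_{\epsilon/2}|$.

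The next step is to add the two contributions and invoke the normalization of $\mu$. Using $\|g\|_{L^1(B_{\epsilon/2})}\le\|g\|_{L^1(\real^d)}=1-m$, for $x\in A_\epsilon$ we get
$$|(K_{x_ix_j}*\mu)(x)|\le mA+(1-m)(A+B)=A+(1-m)B\le A+B,$$
which is exactly the claimed estimate \eqref{bibibi}. In particular this bound does not depend on $\mu$, so the family $\{K_{x_ix_j}*\mu:\mu\in\prob_{RD}(\real^d)\}$ is uniformly bounded on each annulus $A_\epsilon$.

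For equicontinuity, I would run the same decomposition on the increment $(K_{x_ix_j}*\mu)(x)-(K_{x_ix_j}*\mu)(y)$ for $x,y\in A_\epsilon$ with $|x-y|$ small. The singular part contributes $m\,|K_{x_ix_j}(x)-K_{x_ix_j}(y)|$, which is small because $K_{x_ix_j}$ is uniformly continuous on the compact annulus $\{\epsilon\le|z|\le1\}\supset\overline{A_\epsilon}$ and $m\le1$; the absolutely continuous part is controlled by estimate \eqref{coco} of Lemma \ref{keylemma} (again with $\|g\|_{L^1(B_{\epsilon/2})}\le1$), whose right-hand side tends to $0$ as $|x-y|\to0$ by uniform continuity of $K_{x_ix_j}$ on compact sets avoiding the origin together with continuity of translation in $L^1(B_2)$, exactly as in the proof of Lemma \ref{keylemma}. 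Taking $\delta<\epsilon/4$ keeps all shifted arguments bounded away from the origin, and the resulting $\delta$ depends only on $\epsilon$ and $\eta$, giving \eqref{bibibi2}.

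There is no genuine difficulty here: the corollary is Lemma \ref{keylemma} plus bookkeeping. The only point needing a little care is that Lemma \ref{keylemma} is stated only for an $L^1$ density, so the Dirac component $m\delta$ must be treated by hand, and then the two $\mu$-dependent weights $m$ and $\|g\|_{L^1}$ must be recombined through $m+\|g\|_{L^1}=1$ so that the final bound becomes independent of $\mu$.
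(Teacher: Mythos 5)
Your proof is correct and takes essentially the same route as the paper: decompose $\mu = m\delta + g$, bound the Dirac contribution $m\,K_{x_ix_j}$ directly on the annulus, apply Lemma \ref{keylemma} with $\Phi=K_{x_ix_j}$ to the $L^1$ part, and deduce equicontinuity from estimate \eqref{coco}. The only difference is that you spell out the recombination $mA+(1-m)(A+B)\le A+B$ via $m+\|g\|_{L^1}=1$, which the paper leaves as ``easy to conclude.''
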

\begin{proof}
Since $\mu \in  \prob_{RD}(\real^d)$ it can be written $\mu=m \delta +g$ where $m \in [0,1]$ and $g$ satisfies the hypothesis of  Lemma \ref{keylemma}. So $K_{x_i x_j}*\mu= m K_{x_i x_j} +K_{x_i x_j}*g$ and it is easy to conclude using   Lemma \ref{keylemma} with $\Phi=K_{x_ix_j}$.  The second statement is a consequence of \eqref{coco}. 
\end{proof}

We now prove Proposition  \ref{regularity}:

{\it Proof of Proposition \ref{regularity}.} Let us first check that $v(x,t)$ is continuous on $A_0 \times [0,+\infty)$. Continuity with respect to time comes from the fact that $t \mapsto \rho(t)$ is narrowly continuous together with the fact that the kernel $\phi$ appearing in formula \eqref{formula} belongs to $C^0_b([0,+\infty))$.  Continuity with respect to space comes from the H\"older continuity of $\phi$. 
 To prove (P1), (P2) and (P3), note first that if $K(x)=\abs{x}^{\alpha}$, $\alpha>2-\alpha$, then $K_{x_i x_j}\in C(\real^d \backslash  \{0\}) \cap L^1_{loc}(\real^d)$,  and then use Corollary \ref{cor:key-estimate}.  \qed 

\

\begin{definition}
$\vel$ is the space of velocity fields $v: B(0,1) \times [0,+\infty) \to \real^d$ which are radially symmetric and pointing inward (i.e. $v(0,t)=0$ and $v(x,t)=- \lambda (\abs{x},t) {x}$, $|x|>0$,  for some nonnegative function $\lambda: (0,1) \times [0,+\infty) \to \real$) and which satisfies (P0)--(P3). 
\end{definition}

Obviously a velocity field defined as in the statement of Proposition \ref{regularity} belongs to $\vel$ (the fact that it points inward comes from formula \eqref{formula} together with the positivity of the kernel $\phi$).
We now investigate properties of  flow maps generated by  velocity fields in $\vel$.
\begin{proposition}
 \label{focussing}
Suppose $v \in \vel$. Then there exists
a unique continuous function $\sigma: B(0,1) \times [0,+\infty) \mapsto B(0,1)$ satisfying
 \begin{equation}\label{ode33}
 \sigma(x,t)=x + \int_{0}^t v(\sigma(x,s),s) ds \qquad\text{ for all }(x,t) \in B(0,1) \times [0,+\infty).
\end{equation}
 Moreover if the point $(x_0,t_0) \in  B(0,1) \times[0,+\infty)$ is such that $\sigma(x_0,t_0) \neq 0$, then the mapping $ x \mapsto \sigma(x,t_0)$ is continuously differentiable at $x_0$ and we have
 \begin{equation}\label{ode555}
 \text{det }\nabla\sigma(x_0,t_0) =   \text{exp} \big( {\int_{0}^{t_0} (\text{div }v)( \sigma(x_0,s),s) ds} \big) >0 .
\end{equation}
In particular, if $R(t)$ is such that $(\sigma^t)^{-1}(\{0\})= \overline{B(0,R(t))}$, then $\sigma^t$ is a diffeomorphism from $B(0,1) \bs \overline{B(0,R(t))}$ to $\real^d \bs \{0\}$. 
\end{proposition}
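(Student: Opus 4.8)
Throughout, fix $v\in\calV$ and write $v(x,t)=-\lambda(\abs{x},t)x$; introduce the scalar radial velocity profile $V(\varrho,t):=-\lambda(\varrho,t)\varrho\le 0$, which by (P0) is continuous on $(0,1)\times[0,+\infty)$ and by (P1) is $C^1$ in $\varrho$ on $(0,1)$. The plan is to reduce the flow to this one–dimensional problem. First I would show that any continuous solution of \eqref{ode33} preserves rays: on any time interval where it avoids the origin it solves the linear equation $\tfrac{d}{dt}\sigma=-\lambda(\abs{\sigma},t)\sigma$ with a coefficient that is continuous, hence locally integrable in $t$, so a Gronwall argument pins $\sigma$ to the ray $\real_+x/\abs{x}$; and since $\tfrac{d}{dt}\abs{\sigma}=V(\abs{\sigma},t)\le 0$, once $\abs{\sigma}$ reaches $0$ it stays there. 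Consequently a solution, if it exists, must equal $\sigma(x,t)=\tfrac{\Psi(\abs{x},t)}{\abs{x}}\,x$ (with $\sigma(0,t)\equiv 0$), where $\Psi(r,\cdot)$ solves $\dot\varrho=V(\varrho,t)$, $\varrho(0)=r$, continued by $0$ past the first hitting time $t^\ast(r)\le+\infty$ of the origin. Uniqueness of $\Psi$ follows from local Lipschitz continuity of $V$ in $\varrho$ on $(0,1)$ together with $0$ being absorbing; and this $\sigma$ is an (absolutely continuous) solution since $\int_0^{t^\ast(r)}\abs{V(\Psi(r,s),s)}\,ds=r<\infty$. Along the way I would record: $\abs{\sigma(x,t)}=\Psi(\abs{x},t)$, with $0\le\Psi(r,t)\le r$; $\Psi(\cdot,t)$ nondecreasing (comparison for the scalar ODE, using that a smaller initial radius reaches the origin no later); and $\Psi(r,\cdot)$ nonincreasing.

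The hard part will be the joint continuity of $\sigma$, precisely across the set where $v$ is unbounded and characteristics hit the origin, and this is where I would invoke the ODE theory developed in the appendix. The self-contained argument: on $\{(r,t):\Psi(r,t)>0\}$ the right-hand side $V$ is Lipschitz, so classical continuous dependence on the initial radius and on time gives joint continuity of $\Psi$ there; near $r=0$ one uses $\Psi(r,t)\le r$; and the only remaining issue, continuity of $\Psi(\cdot,t)$ at $R(t):=\sup\{r:\Psi(r,t)=0\}$, I would settle by noting that a positive one–sided limit $c=\lim_{r\downarrow R(t)}\Psi(r,t)>0$ would, after running the scalar ODE backward from time $t$ inside the Lipschitz region $\{\varrho\ge c\}$, force $\Psi(R(t),t)=c>0$, a contradiction. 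Separate continuity together with monotonicity in each variable and the bound $\Psi(r,t)\le r$ then upgrades to joint continuity of $\Psi$, hence of $\sigma(x,t)=\tfrac{\Psi(\abs{x},t)}{\abs{x}}x$.

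For the differentiability and Jacobian formula, fix $(x_0,t_0)$ with $\sigma(x_0,t_0)\neq 0$ and set $r_0=\abs{x_0}$, $r_1=\Psi(r_0,t_0)>0$. Since $\Psi(r_0,\cdot)$ is nonincreasing, $\Psi(r_0,s)\ge r_1$ for all $s\in[0,t_0]$, so the trajectory stays in a compact annulus inside $A_0$, where $V$ is $C^1$ in $\varrho$ with bounded derivative and, by an Arzel\`a--Ascoli argument from (P0)--(P3) (equiboundedness and equicontinuity of $\nabla_x v(\cdot,t)$ in $x$ plus joint continuity of $v$ force $\nabla_x v$ to be jointly continuous), $\partial_\varrho V$ is jointly continuous there. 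The classical $C^1$-dependence theorem then makes $r\mapsto\Psi(r,t_0)$ of class $C^1$ near $r_0$, with $w(t):=\partial_r\Psi(r_0,t)$ solving $\dot w=\partial_\varrho V(\Psi(r_0,t),t)\,w$, $w(0)=1$, so $\partial_r\Psi(r_0,t_0)=\exp\big(\int_0^{t_0}\partial_\varrho V(\Psi(r_0,s),s)\,ds\big)>0$. Writing $\sigma(x,t_0)=g(\abs{x})\,x$ with $g(r)=\Psi(r,t_0)/r$ (smooth and nonzero near $r_0$), the map is $C^1$ near $x_0$, and $\nabla\sigma(x,t_0)=g(\abs{x})I+\tfrac{g'(\abs{x})}{\abs{x}}\,x\otimes x$ has eigenvalues $\tfrac{d}{dr}(rg(r))\big|_{\abs{x}}=\partial_r\Psi(\abs{x},t_0)$ (radial) and $\Psi(\abs{x},t_0)/\abs{x}$ (multiplicity $d-1$), whence
\[
\det\nabla\sigma(x_0,t_0)=\partial_r\Psi(r_0,t_0)\Big(\tfrac{\Psi(r_0,t_0)}{r_0}\Big)^{d-1}>0 .
\]
To identify this with \eqref{ode555}, I would check that $\dive v$ at a point of magnitude $\varrho$ equals $\partial_\varrho V(\varrho,t)+(d-1)V(\varrho,t)/\varrho$; the first term integrates along the trajectory to $\log\partial_r\Psi(r_0,t_0)$ by the variational equation, and, since $\partial_s\Psi(r_0,s)=V(\Psi(r_0,s),s)$, the second integrates to $(d-1)\int_0^{t_0}\tfrac{d}{ds}\log\Psi(r_0,s)\,ds=(d-1)\log\tfrac{\Psi(r_0,t_0)}{r_0}$; summing gives $\int_0^{t_0}(\dive v)(\sigma(x_0,s),s)\,ds=\log\det\nabla\sigma(x_0,t_0)$, which is \eqref{ode555}. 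All integrands are bounded because $\Psi(r_0,s)\ge r_1>0$ on $[0,t_0]$.

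Finally, for the diffeomorphism statement I would first note that $\{x:\sigma(x,t)=0\}=\{\abs{x}\le R(t)\}=\overline{B(0,R(t))}$, since $\Psi(\cdot,t)$ is continuous and nondecreasing; hence on $B(0,1)\setminus\overline{B(0,R(t))}=\{R(t)<\abs{x}<1\}$ one has $\sigma(x,t)\neq 0$, and by the previous step $\sigma^t$ is a $C^1$ local diffeomorphism there (inverse function theorem, $\det\nabla\sigma^t>0$). Injectivity on this set follows from radial symmetry plus backward uniqueness of the scalar ODE in $\{\varrho>0\}$: if $\sigma^t(x)=\sigma^t(x')$ then $\Psi(\abs{x},t)=\Psi(\abs{x'},t)>0$, and running both trajectories (which stay $\ge\Psi(\abs{x},t)>0$ on $[0,t]$) backward from time $t$ gives $\abs{x}=\abs{x'}$, whence $x=\tfrac{\abs{x}}{\Psi(\abs{x},t)}\sigma^t(x)=x'$. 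An injective $C^1$ local diffeomorphism is a diffeomorphism onto its open image, which here is $\sigma^t(B(0,1))\setminus\{0\}$ (equivalently, the punctured ball of radius $\lim_{r\to1^-}\Psi(r,t)$, since $\Psi(\cdot,t)$ is continuous and strictly increasing on $(R(t),1)$). I expect the joint continuity in the first part to be the genuine obstacle; once the appendix's ODE machinery — well-posedness of the absorbed flow, its joint continuity, and $C^1$-dependence with the variational equation under hypotheses (P0)--(P3) — is in hand, the remaining steps are the standard facts and short computations sketched above.
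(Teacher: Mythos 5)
Your proof is correct and takes a genuinely different, more explicit route than the paper's. The paper's proof is very terse: it notes that $v$ is Lipschitz away from the origin and inward-pointing, asserts the flow map exists, is unique, and (implicitly) is continuous, and then delegates the differentiability and the Jacobian formula to Theorem~\ref{chicone} in the appendix, which redoes Chicone's fiber-contraction proof of $C^1$-dependence under the weakened hypotheses (H0)--(H3). You instead reduce the problem to a scalar ODE $\dot\varrho=V(\varrho,t)$ on the half-line by first showing any solution preserves rays (so $\sigma(x,t)=\Psi(|x|,t)\,x/|x|$), which buys several things: the continuity of $\sigma$ across the hitting set $\{\sigma=0\}$ -- a point the paper does not explicitly address -- falls out of monotonicity of $\Psi$ in each variable plus separate continuity, and the Jacobian is obtained by a direct spherical-coordinates computation $\det\nabla\sigma=\partial_r\Psi\cdot(\Psi/r)^{d-1}$ rather than by Liouville's theorem. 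For the $C^1$-dependence you avoid reworking the fiber-contraction argument entirely by observing that (P0)--(P3) force joint continuity of $\nabla_x v$ on $A_0\times[0,\infty)$ via Arzel\`a--Ascoli (any sequence $\nabla v(\cdot,t_n)$ has a uniformly convergent subsequence whose limit must be $\nabla v(\cdot,t)$ since $v(\cdot,t_n)\to v(\cdot,t)$ uniformly), after which the classical smooth-dependence theorem applies directly; this is arguably a cleaner way to see that (P3) is exactly the right hypothesis. Two small remarks: (1) in your continuity argument at $r=R(t)$ you invoke ``$\Psi(R(t),t)=0$'' as the source of contradiction, but this is the very fact being established -- a slightly more careful statement is that $\lim_{r\downarrow R(t)}\Psi(r,t)=c>0$ together with continuous dependence would force $\Psi(r,t)\ge c/2>0$ for $r$ slightly below $R(t)$, contradicting $\Psi(r,t)=0$ there, after which $\Psi(R(t),t)=0$ follows by squeezing; (2) you correctly identify the image of $\sigma^t$ as the punctured ball $B(0,\lim_{r\to1^-}\Psi(r,t))\setminus\{0\}$ rather than $\real^d\setminus\{0\}$ as written in the proposition -- the statement in the paper is a harmless imprecision (Corollary~\ref{push-forward} only needs a diffeomorphism onto the image).
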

\begin{proof} The global existence and forward uniqueness of solution of the ODE $\dot{x}=v(x,t)$ simply come from the fact that  $v$ is Lipschitz continuous with respect to space away from the origin (because of (P2)) together with the fact that $v$ is pointing inward. 
Indeed, solutions  can be continued as long as they are in $B(0,1)\bs \{0\}$ and since  $v$ is pointing inward, the only way for a solution to escape this domain is to reach the origin. When a solution  reaches the origin, it stays there forever in accordance to the fact that $v(0,t)=0$ for all $t \ge 0$.

The differentiability of $\sigma^t$ on $B(0,1) \bs \overline{B(0,R(t))}$ and formula \eqref{ode555} are more delicate. In classical ODE textbooks, such results are obtained under the assumption that $v(x,t)$ is continuously differentiable  in both space and time. In our case  $v$ is continuously differentiable in space but only continuous in time. However, by revisiting classical proofs, one can easily check that assumption (P3) is enough to obtained differentiability of the flow map as well as formula \eqref{ode555}. This is done in section \ref{ode-appendix} of the appendix.
 \end{proof}


Since the flow map $\sigma^t$ generated by a velocity field in $\vel$ is a diffeomorphism from $B(0,1) \bs \overline{B(0,R(t))}$ to $B(0,1) \bs \{0\}$, we can use the change of variable formula to express the push forward of a measure in $\prob_{RD}(\real^d)$ by $\sigma^t$:

 \begin{corollary}
 \label{push-forward}
 Suppose $v \in \vel$ and let $\sigma$ be the associated flow map provided by Proposition \ref{focussing}.
Let $\mu=m_0 \delta + g_0 \in \prob_{RD}(\real^d)$ and assume that $\text{supp} \mu \in B(0,1)$. Then
$$
\sigma^t \# \mu = m(t) \delta + g(t) 
$$
 where $m(t) \in \real^+$ and $g(t) \in L^1(\real^d)$  satisfy :
\begin{gather} 
m(t)=m_0+\int_ {(\sigma^t)^{-1}(\{0\}) }g_0(x) \; dx  \label{bobo}\\
g(x,t) 
=  \p{\frac{g_{0}}{\text{det} \, \nabla \sigma^t} \circ (\sigma^t)^{-1}}(x) \quad \text{ for }x \neq 0\label{ohi}.
\end{gather}
\end{corollary}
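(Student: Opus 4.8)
The plan is to prove Corollary \ref{push-forward} as a direct consequence of Proposition \ref{focussing} together with the standard change-of-variables formula for push-forwards of measures, treating the atomic part and the absolutely continuous part of $\mu$ separately. Throughout, set $B=B(0,1)$, write $R=R(t)$, and let $U_t = B \setminus \overline{B(0,R(t))}$, so that by Proposition \ref{focussing} the map $\sigma^t$ restricts to a diffeomorphism from $U_t$ onto $B \setminus \{0\}$ with $\det \nabla \sigma^t > 0$ on $U_t$, and the preimage of the origin is exactly the closed ball $\overline{B(0,R(t))}$ (note also that $\sigma^t(x) \in B$ for all $x$, since $v$ points inward, so the support condition $\operatorname{supp}\mu \subset B$ is preserved).

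\textbf{Step 1: splitting the push-forward.} Since $\mu = m_0 \delta + g_0$ with $g_0 \in L^1(B)$, linearity of push-forward gives $\sigma^t \# \mu = m_0 (\sigma^t \# \delta) + \sigma^t \# g_0$. Because $\sigma(0,t)=0$ for all $t$ (the origin is a fixed point of the flow), $\sigma^t \# \delta = \delta$. It remains to analyze $\sigma^t \# g_0$, which I will show decomposes as $\big(\int_{\overline{B(0,R(t))}} g_0\big)\,\delta + g(\cdot,t)$ with $g(\cdot,t)$ given by \eqref{ohi}. Adding the $m_0 \delta$ term back then yields \eqref{bobo} and \eqref{ohi}.

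\textbf{Step 2: the atom at the origin.} For any Borel set $A$, $(\sigma^t \# g_0)(A) = \int_{(\sigma^t)^{-1}(A)} g_0(x)\,dx$. Taking $A = \{0\}$ gives $(\sigma^t \# g_0)(\{0\}) = \int_{(\sigma^t)^{-1}(\{0\})} g_0 = \int_{\overline{B(0,R(t))}} g_0(x)\,dx$, which is the claimed atom. For the non-atomic part, restrict attention to Borel sets $A \subset \real^d \setminus \{0\}$: then $(\sigma^t)^{-1}(A) \subset U_t$, and since $\sigma^t: U_t \to B\setminus\{0\}$ is a diffeomorphism with positive Jacobian, the change-of-variables formula gives
\begin{equation*}
(\sigma^t \# g_0)(A) = \int_{(\sigma^t)^{-1}(A)} g_0(x)\,dx = \int_{A} \frac{g_0\big((\sigma^t)^{-1}(y)\big)}{\det\nabla\sigma^t\big((\sigma^t)^{-1}(y)\big)}\,dy = \int_A g(y,t)\,dy,
\end{equation*}
with $g(\cdot,t)$ as in \eqref{ohi}. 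This shows $\sigma^t \# g_0$ agrees with the absolutely continuous measure $g(\cdot,t)\,dy$ on $\real^d \setminus \{0\}$; combined with Step 2's computation of its mass at the origin, we get $\sigma^t \# g_0 = \big(\int_{\overline{B(0,R(t))}} g_0\big)\delta + g(\cdot,t)$ as measures on all of $\real^d$. One should also check $g(\cdot,t) \in L^1$, which is immediate since it is a nonnegative measurable function whose integral over $\real^d \setminus \{0\}$ equals $\int_{U_t} g_0 \le \|g_0\|_{L^1} < \infty$, and that $\sigma^t\#\mu$ is radially symmetric decreasing so that it indeed lies in $\prob_{RD}$ — radial symmetry is clear since $\sigma^t$ is radial, and monotonicity follows because $\sigma^t$ is a radial map that is monotone in $|x|$ on $U_t$ (a consequence of the flow being radial and order-preserving on radii), though strictly speaking for the statement as written only the decomposition $m(t)\delta + g(t)$ is needed.

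The main obstacle is essentially bookkeeping rather than a deep difficulty: one must be careful that $(\sigma^t)^{-1}$ as used in \eqref{ohi} means the inverse of the restricted diffeomorphism $\sigma^t|_{U_t}$, and that the set $(\sigma^t)^{-1}(\{0\})$ being a full closed ball (rather than a single point) is exactly what allows $g_0$, an $L^1$ function with no atoms, to generate an honest Dirac mass under push-forward — this is the focusing effect. The only genuine input needed beyond the change-of-variables theorem is Proposition \ref{focussing}, which supplies both the diffeomorphism property away from the focusing ball and the positivity of the Jacobian; everything else is the additivity of $(\sigma^t\#g_0)(\cdot)$ over the disjoint decomposition $\real^d = \{0\} \sqcup (\real^d\setminus\{0\})$.
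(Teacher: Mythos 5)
Your proof is correct and is exactly the argument the paper intends: the paper justifies this corollary in one sentence by invoking the diffeomorphism property from Proposition \ref{focussing} and the change-of-variables formula, which is precisely what you carry out (splitting off the atom, pushing forward the Dirac mass at the fixed origin, computing the mass absorbed from $g_0$ on $(\sigma^t)^{-1}(\{0\})=\overline{B(0,R(t))}$, and applying change of variables on the complement). Your bookkeeping about the restricted inverse and the focusing set is accurate and fills in details the paper leaves implicit.
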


\subsection{Radially symmetric decreasing  profiles are preserved}

\

In this subsection we give a heuristic argument (which will be made rigorous in the next subsection) explaining why
radially symmetric decreasing profiles are preserved by the aggregation equation when $2-d<\alpha\le 2$. We recall here that it is observed numerically that when $\alpha>2$, radially symmetric decreasing profiles are not preserved.
A key ingredient in our argument is the known fact that the convolution of two radially symmetric decreasing functions is still radially symmetric decreasing (see \cite{Lieb83} for example). For completeness we give a quick proof of this fact:
\begin{lemma} \label{layercake}  Suppose  $g\in L^1(\real^d)$ with compact support and  $f\in L^1_{loc}(\real^d)$.  If $f$ and $g$ are nonnegative radially symmetric decreasing  functions, then $f*g$ is also a nonnegative radially symmetric decreasing function.
\end{lemma}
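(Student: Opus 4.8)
The plan is to prove Lemma~\ref{layercake} via the ``layer-cake'' (distribution function) representation, which is the standard route and the reason for the name of the lemma. First I would write each nonnegative radially symmetric decreasing function as a superposition of indicator functions of centered balls: since $f$ is radial and decreasing, the superlevel sets $\{f>s\}$ are (up to null sets) balls $B(0,r_f(s))$, and likewise $\{g>s\}=B(0,r_g(s))$, so
\[
f(x)=\int_0^\infty \1_{\{f>s\}}(x)\,ds=\int_0^\infty \1_{B(0,r_f(s))}(x)\,ds,\qquad
g(x)=\int_0^\infty \1_{B(0,r_g(s))}(x)\,ds.
\]
By Tonelli's theorem (everything nonnegative) the convolution becomes
\[
(f*g)(x)=\int_0^\infty\!\!\int_0^\infty \big(\1_{B(0,r_f(s))}*\1_{B(0,r_g(t))}\big)(x)\,ds\,dt,
\]
so it suffices to prove the lemma when $f=\1_{B(0,a)}$ and $g=\1_{B(0,b)}$, i.e.\ that the function $x\mapsto |B(0,a)\cap B(x,b)|$ is radially symmetric and nonincreasing in $|x|$. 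Radial symmetry is immediate from the rotational invariance of Lebesgue measure. For monotonicity I would show that if $|x_1|\le |x_2|$ then $|B(0,a)\cap B(x_1,b)|\ge |B(0,a)\cap B(x_2,b)|$; after a rotation we may assume $x_1,x_2$ lie on the positive first axis with $0\le x_1\le x_2$, and then translating $B(x_2,b)$ toward the origin only increases its overlap with the centered ball $B(0,a)$. This last ``moving the ball toward the center increases the overlap'' step is the one real point to nail down: I would argue it by reflection/rearrangement across the hyperplane bisecting the segment from $x_1$ to $x_2$ (a Riesz-type reflection argument), or alternatively by noting that $t\mapsto |B(0,a)\cap B(te_1,b)|$ is even in $t$ and that for $t\ge 0$ each one-dimensional slice $\{x_1=c\}$ of the intersection has length that is nonincreasing in $t$, so integrating over $c$ gives the claim.

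For the integrability bookkeeping: since $g$ has compact support and $f\in L^1_{loc}$, $f*g$ is well defined a.e.\ and locally integrable (e.g.\ by Young's inequality on a large ball containing $\mathrm{supp}\,g$, exactly as invoked after Lemma~\ref{keylemma}); in the reduction above one should truncate $f$ to $f\wedge N$ on $B(0,N)$ to make the double integral genuinely finite, prove the monotonicity for the truncation, and then pass to the limit $N\to\infty$ by monotone convergence, noting that a pointwise (a.e.) limit of radially symmetric decreasing functions is again radially symmetric decreasing. I would also remark that ``decreasing as a function of the radius'' is meant in the a.e.\ sense, which is all that is used later.

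The main obstacle is the elementary geometric fact that sliding a ball of fixed radius toward the center of another fixed ball does not decrease the volume of their intersection; everything else (layer-cake decomposition, Tonelli, the limiting argument) is routine. I would present that geometric step cleanly via the bisecting-hyperplane reflection: writing $H$ for the closed half-space containing the origin and bounded by the perpendicular bisector of $[x_1,x_2]$, reflection $R$ across $\partial H$ swaps $B(x_1,b)$ and $B(x_2,b)$ and fixes $B(0,a)$ setwise if we only need $B(0,a)\subset$ nothing—more carefully, one compares $|B(0,a)\cap B(x_1,b)|$ and $|B(0,a)\cap B(x_2,b)|$ by splitting $B(0,a)$ into $B(0,a)\cap H$ and its reflection, using $B(x_1,b)\cap H \supseteq R\big(B(x_2,b)\cap H\big)$ and $B(x_2,b)\setminus H \subseteq R\big(B(x_1,b)\setminus H\big)$. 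This gives $|B(0,a)\cap B(x_1,b)|\ge |B(0,a)\cap B(x_2,b)|$, completing the proof.
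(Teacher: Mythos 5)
Your overall route is the same as the paper's: a layer-cake decomposition combined with Tonelli. The paper is terser --- it decomposes only $g$ into indicators of centered balls and simply asserts that the convolution of $\chi_{B(0,r)}$ with a nonnegative radially decreasing $L^1_{loc}$ function is again radially decreasing --- whereas you decompose both factors and reduce to showing that $x\mapsto |B(0,a)\cap B(x,b)|$ is radially symmetric and nonincreasing in $|x|$, which is the honest kernel of the argument and worth writing out. Your ``alternative'' justification of that step is the right one: slice the intersection by lines parallel to the direction of translation; each such slice is the intersection of a symmetric interval with a translate of a symmetric interval, whose length is nonincreasing in the translation parameter for nonnegative translations, and integrating over the orthogonal variables gives the claim. (Note these slices should be one-dimensional lines in the $e_1$ direction, not the hyperplanes $\{x_1=c\}$ as written; a hyperplane slicing also works, but only after an additional layer-cake in the level parameter, since the hyperplane sections are concentric $(d-1)$-balls whose common volume must itself be compared.)

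The reflection argument you propose as the primary route does not work as stated. With $H$ the closed half-space containing the origin bounded by the perpendicular bisector of $[x_1,x_2]$, the reflection $R$ maps $H$ onto $\overline{H^c}$; hence $R\big(B(x_2,b)\cap H\big)=B(x_1,b)\cap \overline{H^c}$, which is not contained in $B(x_1,b)\cap H$, and the second displayed inclusion fails for the same reason. Moreover $B(0,a)$ is not $R$-symmetric, so it cannot be split into $B(0,a)\cap H$ ``and its reflection.'' Even the natural repair --- use the identity on $B(0,a)\cap B(x_2,b)\cap H$ (valid because $B(x_2,b)\cap H\subseteq B(x_1,b)$) and $R$ on $B(0,a)\cap B(x_2,b)\cap H^c$ (valid because $|Ry|\le |y|$ there) --- fails to produce an injection into $B(0,a)\cap B(x_1,b)$, since both pieces land in $H$: in one dimension with $a=b=10$, $x_1=0$, $x_2=1$, the points $0.4$ and $0.6$ are both mapped to $0.4$. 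So discard the reflection step and keep the slicing argument (or reduce all the way to intervals by a one-dimensional layer-cake and compute the overlap of two intervals directly). With that replacement, together with the truncation and monotone-convergence bookkeeping you describe, the proof is complete and matches the paper's in spirit while actually supplying the geometric fact the paper leaves unproved.
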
 
\begin{proof}
The mononicity of $g$ allows us to use a ``layer cake" decomposition of g, namely
 $$g(x) =\int_0^\infty\chi_{B(0,\tilde r(g))} (x) dg$$
 where $\tilde r(g)$ denotes the inverse function of $g(r)$ and $\chi_{B(0,s)}$ denotes the characteristic function of the ball of radius $s$.
 Thus 
 \begin{equation}
 f*g(x)  = \int_0^\infty f*\chi_{B(0,\tilde r(g))} (x) dg
 \label{conv}\end{equation}
 and we note that  the integrand of (\ref{conv}) is monotone decreasing
 because the characteristic function of a ball convolved with a nonnegative $L^1_{loc}$ monotone decreasing function is itself  monotone decreasing.  By integrating a monotone integrand with respect to g we obtain 
the monotonicity result for $f*g$.
\end{proof}

\

We now present the heuristic argument. Let us assume that $u(x,t)$ is a smooth solution of the aggregation equation. Clearly we have:
\begin{equation} \label{pde111}
\frac{\partial{u}}{{\partial t}}  + \nabla u \cdot {v}= (\Delta K * u) u. 
\end{equation}
Suppose that 
 $\Delta K$ is locally integrable, nonnegative  and radially symmetric decreasing.
When  $\nabla K(x)=x |x|^{\alpha-2}$, these hold true if and only if  $2-d<\alpha\le 2$.
We then use  Lemma \ref{layercake} to see that  if for some $t_0\ge 0$,  $u(\cdot,t_0)$ is radially symmetric decreasing  then the right hand side of \eqref{pde111} is also   radially symmetric decreasing at $t_0$.  This indicates that the rate of change along the characteristic is greater the closer we are to the origin.
Therefore the solution  is expected to remain radially symmetric decreasing for $t>t_0$.
For the special case of the Newtonian potential, $\Delta K*u=u$ and monotonicity is similarly preserved - this is discussed in more detail in Section~\ref{Newtonian}. 

\subsection{Proof of Theorem \ref{thm:rd}}

\

The  proof is inspired by the work in \cite{Canizo}, where global existence of measure solutions for some kinetics model was obtained by using a fixed point iteration in the space of probability measures endowed with the Wasserstein distance. 

Let $\nabla K(x)=x {\abs{x}^{\alpha-2}}$, $\alpha \in (2-d,2)$, and  let $\rho_{init} \in \prob_{RD}(\real^d)$ with $\text{supp}(\rho_{init}) \subset B(0,1)$. Define:
\begin{align*}
&\rho_0(t)=\rho_{init} \qquad \qquad \qquad \forall t \in [0,+\infty)  \\
& 
 v_0(x,t)=\begin{cases}-(\nabla K * \rho_0(t))(x),& \text{ if } x\neq 0\\
  0, & \text{ if } x=0
 \end{cases} 
  \qquad \forall t \in [0,+\infty) \\
&\sigma_0^t : \real^d \to \real^d := \text{flow map associated with } v_0&
\end{align*}
and for $n \ge 1$ define recursively 
\begin{align*}
&\rho_n(t)= \sigma^t_{n-1} \# \rho_{init} \qquad\qquad  \forall t \in [0,+\infty)  \\
& 
 v_n(x,t)=\begin{cases}-(\nabla K * \rho_n(t))(x),& \text{ if } x\neq 0\\
  0, & \text{ if } x=0
 \end{cases} 
  \qquad \forall t \in [0,+\infty) \\
&\sigma_n^t : \real^d \to \real^d := \text{flow map associated with } v_n.
\end{align*}

\begin{proposition}  \label{prop:it} 
\
\begin{enumerate}
\item[(i)]  For all $n\ge 0$, $\rho_n \in C([0,+\infty), \prob_{RD}(\real^d)) $  and $\text{supp}(\rho_n(t)) \subset B(0,1)$ for all $t\ge 0$.
\item[(ii)] 
Given $\epsilon>0$, there exists  $L_\epsilon > 0$  such that
$$
\abs{v_n(x,t)-v_n(y,t)} \le L_\epsilon \abs{x-y}
$$
for all $x,y \in A_\epsilon$, for all  $t \ge 0$, and for all $n \ge 0$. 

\item[(iii)] There exists a constant $\theta \in (0,1]$  depending only on $\alpha$ such that the following holds:  
 Given $\epsilon>0$, there exists $C_\epsilon>0$ and $\delta>0$ such that $\abs{t-s} < \delta$ implies
$$
\abs{v_n(x,t)-v_n(x,s)} \le C_\epsilon \abs{s-t}^{\theta}
$$
for all $x \in A_\epsilon$ and for all $n \ge 0$. 
\item[(iv)] $\rho_{n+1}(t) \succ \rho_n(t)$ for all $n\ge 0$ and all $t\in[0,+\infty)$. This implies  $|v_{n+1}(x,t)| \ge |v_n(x,t)|$ for all $(x,t) \in \real^d \times [0,+\infty)$ and for all $n\ge 0$. 
\end{enumerate}
\end{proposition}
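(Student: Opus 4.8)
# Proof Proposal for Proposition \ref{prop:it}

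The plan is to prove the four statements essentially simultaneously by induction on $n$, since they are interlocked: statement (i) for index $n$ is needed to apply Proposition \ref{regularity} and Corollary \ref{cor:key-estimate} to $v_n$, which then yields (ii) and (iii) for index $n$; and (i) for index $n+1$ requires knowing $v_n \in \vel$ so that Proposition \ref{focussing} and Corollary \ref{push-forward} apply to $\sigma_n^t$. The base case $n=0$ is immediate: $\rho_0(t) \equiv \rho_{init} \in \prob_{RD}(\real^d)$ with support in $B(0,1)$, so (i) holds, and $v_0$ is exactly a velocity field of the type considered in Proposition \ref{regularity}, hence belongs to $\vel$.

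For the inductive step, assume $\rho_n \in C([0,+\infty), \prob_{RD}(\real^d))$ with $\mathrm{supp}(\rho_n(t)) \subset B(0,1)$. Then by Proposition \ref{regularity} the field $v_n$ satisfies (P0)--(P3), and since formula \eqref{formula} together with positivity of $\phi$ shows $v_n$ points inward, we get $v_n \in \vel$. Statements (ii) and (iii) for index $n$ then follow: (ii) is exactly property (P2) integrated (Lipschitz continuity on $A_\epsilon$ with constant $L_\epsilon$ independent of $t$, and the constant is independent of $n$ because the bound \eqref{bibibi} in Corollary \ref{cor:key-estimate} is uniform over all $\mu \in \prob_{RD}(\real^d)$, hence over all $\rho_n(t)$). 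For (iii), I would estimate $|v_n(x,t) - v_n(x,s)|$ using formula \eqref{formula}: the difference is controlled by $|x|^{\alpha-1} \int_0^\infty \phi(r/|x|)\, d(\hat\rho_n(t) - \hat\rho_n(s))(r)$, and since $\hat\rho_n(\cdot) = \widehat{\sigma_{n-1}^t \# \rho_{init}}$ is pushed forward along a flow with velocity bounded by $|x|^{\alpha-1} \le 1$ on $B(0,1)$ (using \eqref{comp33}), mass moves at finite speed; combined with the Hölder modulus of continuity of $\phi$ from Lemma \ref{phi-is-holder}, this gives the Hölder-in-time bound with exponent $\theta = \min(1, \alpha-(2-d))$ or similar, uniform in $n$. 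Then $v_n \in \vel$ lets me apply Proposition \ref{focussing} to get $\sigma_n^t$ a diffeomorphism off a central ball, and Corollary \ref{push-forward} gives $\rho_{n+1}(t) = \sigma_n^t \# \rho_{init} = m_{n+1}(t)\delta + g_{n+1}(t)$ with $g_{n+1}(t)$ radially symmetric; that $g_{n+1}(t)$ is \emph{decreasing} and that $t \mapsto \rho_{n+1}(t)$ is $W_2$-continuous are the remaining pieces of (i), which I address below.

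For statement (iv), I would argue by induction as well. For $n=0$: $\rho_1(t) = \sigma_0^t \# \rho_{init}$ where $\sigma_0^t$ moves every point toward the origin (the flow of an inward-pointing field with $|\sigma_0(x,t)| \le |x|$), so $\widehat{\rho_1(t)} = T \# \widehat{\rho_{init}}$ with $T(r) = |\sigma_0(x,t)|$ for $|x|=r$ satisfying $T(r) \le r$; hence $\rho_1(t) \succ \rho_0(t) = \rho_{init}$ by Definition \ref{def:concentration}. Actually I need $T$ well-defined as a function of $r$, which holds because $v_0$ is radial so $\sigma_0^t$ preserves spheres (mapping $\{|x|=r\}$ into $\{|x| = T(r,t)\}$). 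For the step $\rho_{n+1}(t) \succ \rho_n(t)$: by the inductive hypothesis $\rho_n(t) \succ \rho_{n-1}(t)$, so by \eqref{comp2} we have $|v_n(x,t)| \ge |v_{n-1}(x,t)|$ pointwise; since both fields are radial and inward-pointing, the faster field $v_n$ drives characteristics at least as far toward the origin, so the radial flow maps satisfy $|\sigma_n^t(x)| \le |\sigma_{n-1}^t(x)|$ for each $x$ (this is a scalar ODE comparison on the radial coordinate: $\dot r = -\lambda_n(r,t) r \le -\lambda_{n-1}(r,t) r$). Composing: $\widehat{\rho_{n+1}(t)} = (\sigma_n^t)\widehat{\phantom{t}} \# \widehat{\rho_{init}}$ and $\widehat{\rho_n(t)} = (\sigma_{n-1}^t)\widehat{\phantom{t}} \# \widehat{\rho_{init}}$ with the former's transport map dominated pointwise by the latter's, which gives $\rho_{n+1}(t) \succ \rho_n(t)$, and then \eqref{comp2} again yields $|v_{n+1}| \ge |v_n|$.

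The main obstacle I anticipate is not any single estimate but the bookkeeping of the scalar radial ODE comparison and, more subtly, justifying that $g_{n+1}(t)$ is \emph{monotone decreasing} rather than merely radial. The change-of-variables formula \eqref{ohi} expresses $g_{n+1}$ in terms of $g_0/\det\nabla\sigma_n^t$ composed with $(\sigma_n^t)^{-1}$; to conclude monotonicity I would want to use the heuristic of the previous subsection made rigorous — namely that the radial flow of an inward monotone field is "focusing" in a way that preserves the decreasing rearrangement structure. I expect the cleanest route is to track, for the radial profile, the quantity $\rho_{n+1}(t)(\overline{B(0,r)})$ as a function of $r$ and show it is concave-like / that its density is decreasing, possibly by differentiating \eqref{bobo}-type mass functions in $r$; alternatively one can show the map $r \mapsto |\sigma_n^t|^{-1}$ has convex-type behavior forcing the Jacobian quotient to be decreasing. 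This regularity-of-the-profile argument, uniform in $n$, is where the real work lies; the Lipschitz and Hölder bounds (ii)--(iii) are comparatively routine consequences of Lemma \ref{dong}, Lemma \ref{phi-is-holder}, and Corollary \ref{cor:key-estimate}.
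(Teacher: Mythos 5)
Your skeleton matches the paper's: an induction in which (i) at step $n$ gives $v_n\in\vel$ via Proposition \ref{regularity}, (ii) follows from Corollary \ref{cor:key-estimate}, (iii) from formula \eqref{formula} combined with the H\"older continuity of $\phi$ (Lemma \ref{phi-is-holder}) and of $t\mapsto\sigma_{n-1}^t(x)$, and (iv) is exactly the content of Lemma \ref{vel-comp} (your radial ODE comparison is how that lemma is proved). However, you have left the decisive step of (i) open: you acknowledge that you cannot show $g_{n+1}(\cdot,t)$ is \emph{decreasing}, and the alternatives you sketch (concavity of $r\mapsto\rho_{n+1}(t)(\overline{B(0,r)})$, ``convex-type behavior'' of the inverse flow) are speculation rather than arguments. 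This is a genuine gap, and it is the crux of the proposition: without it the iterates do not stay in $\prob_{RD}(\real^d)$ and Corollary \ref{cor:key-estimate} cannot be reapplied at the next step.

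The paper closes the gap with the combination you gesture at but do not execute. By Corollary \ref{push-forward}, $g_{n+1}(x,t)=\bigl(g_0/\det\nabla\sigma_n^t\bigr)\circ(\sigma_n^t)^{-1}(x)$, and by \eqref{ode555}, $1/\det\nabla\sigma_n^t(x)=\exp\bigl(-\int_0^t(\mathrm{div}\,v_n)(\sigma_n^s(x),s)\,ds\bigr)$. Now $-\mathrm{div}\,v_n(\cdot,s)=\Delta K*\rho_n(s)$, and for $2-d<\alpha<2$ the function $\Delta K$ is nonnegative, locally integrable and radially decreasing, so Lemma \ref{layercake} together with the inductive hypothesis $\rho_n(s)\in\prob_{RD}(\real^d)$ makes $-\mathrm{div}\,v_n(\cdot,s)$ nonnegative and radially decreasing. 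Since $|x|\le|y|$ implies $|\sigma_n^s(x)|\le|\sigma_n^s(y)|$, the map $x\mapsto 1/\det\nabla\sigma_n^t(x)$ is nonnegative, radial and decreasing; the product of the two decreasing profiles $g_0$ and $1/\det\nabla\sigma_n^t$ composed with the radially increasing map $(\sigma_n^t)^{-1}$ is then decreasing, which is what you needed. You also promise but never supply the $W_2$-continuity of $t\mapsto\rho_{n+1}(t)$; the paper obtains it from the bound $|v_n(x,t)|\le|x|^{\alpha-1}$, Lemma \ref{holderlem} (H\"older or Lipschitz continuity of $t\mapsto\sigma_n^t(x)$ according to whether $\alpha<1$ or $\alpha\ge1$) and Lemma \ref{canizo}. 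The rest of your proposal, parts (ii)--(iv), is essentially correct and follows the paper's route.
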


\

Before we prove  this proposition let us explain how it will be used in the proof of  Theorem \ref{thm:rd}.  Because of statements (ii), (iii), (iv) and  the bound $\abs{v_n(x,t)}\le \abs{x}^{\alpha-1}$ (see \eqref{comp33}), we can use the Arzela-Ascoli theorem to conclude   that the $v_n$'s converge uniformly on $A_\epsilon \times [0,+\infty)$ to some function $v$  which is Lipschitz continuous in space and H\"older continuous in time, with same constants $L_\epsilon$ and $C_\epsilon$. Since $\epsilon$ can be chosen as small as we want, $v(x,t)$ is well define on $B(0,1)\bs \{0\} \times [0,+\infty)$. Let $v(0,t)=0$ so that $v$ is now well defined on $B(0,1) \times [0,+\infty)$. This velocity field $v(x,t)$ generates a flow map $\sigma^t: B(0,1) \to B(0,1)$
 and from this flow map we can construct $\rho(t)=\sigma^t\# \rho_{init}$.  In Proposition \ref{prop:unif-conv} it will be shown that $\sigma_n$ converges uniformly to $\sigma$ on $B(0,1) \times [0,\infty)$. This implies in particular that for a given $t$, $\rho_n(t)$ converges narrowly to $\rho(t)$. The narrow convergence preserves the monotonicity (see Proposition \ref{preserve} of the Appendix), and therefore $\rho(t)$ is radially symmetric 
 decreasing. In order to prove  that the radially symmetric 
decreasing function $\rho(t)$ and the flow map $\sigma^t$ obtained by the above limiting process satisfy \eqref{sol1aa} and \eqref{sol2aa} we just need to show that $v(x,t)=-(\nabla K *\rho(t))(x)$ for $x\neq 0$, and this fact will follow  easily from passing to the limit in the relationship $v_n(x,t)=-(\nabla K *\rho_n(t))(x)$ for $x \neq 0$.

{ \it Proof of Proposition \ref{prop:it}.}
Let us first prove (i). The initial iterate $\rho_0(t) \equiv \rho_{init}$ obviously belongs to $C([0,+\infty), \prob_{RD}(\real^d))$ with $\text{supp}(\rho_0(t)) \subset B(0,1)$ for all $t\ge 0$.
Assume that  $\rho_n \in C([0,+\infty), \prob_{RD}(\real^d))$ with $\text{supp}(\rho_n(t)) \subset B(0,1)$ for all $t\ge 0$. From Proposition \ref{regularity} $v_n \in \vel$ and from Corollary \ref{push-forward}
\begin{align} \label{oh}
&\rho_{n+1}(t)  = m_{n+1}(t) \delta + g_{n+1}(t) \\
&m_{n+1}(t)=m_0+\int_ {(\sigma_n^t)^{-1}(\{0\}) }g_0(x) \; dx \\
& \label{baba}g_{n+1}(x,t) 
=  \p{\frac{g_{0}}{\text{det} \, \nabla \sigma_n^t} \circ (\sigma_n^t)^{-1}}(x) \quad \text{ for }x \neq 0.
\end{align}
Here $m_0$ and $g_0$ are such that $\rho_{init}=m_0 \delta + g_0$.
Also from Proposition \ref{focussing} we know that $\text{det} \nabla \sigma_n^t$ satisfies
\begin{equation} \label{bibifoc}
\text{det} \nabla\sigma_n^t(x) = \text{exp} {\int_0^t (\text{div } {v_n}) (\sigma_n^s(x),s) ds }
\end{equation}
for all $(x,t)$ such that $\sigma_n^t(x) \neq 0$.
Since we have assumed that $\rho_n(t)$ is in $\prob_{RD}(\real^d)$ with compact support, and since for $\alpha \in (2-d,2)$ $\Delta K$ is nonnegative, radially symmetric decreasing and locally integrable, we know from Lemma \ref{layercake} that the function 
$
x \mapsto - \text{div} v_n(x,t) = [\Delta K * \rho_n(t)](x)
$
is nonnegative,  radially symmetric and decreasing. 
Since $|x|\le |y|$ implies  $|\sigma_n^s(x)| \le |\sigma_n^s(y)|$  one can easily see from \eqref{bibifoc} that
$$
x\mapsto \frac{1}{\text{det} \nabla \sigma_n^t(x)} \text{ is nonnegative,  radially symmetric and decreasing.}
$$
Then we easily see from \eqref{baba} that, since $g_0$ is radially symmetric and decreasing, so is $x \mapsto g_{n+1}(x,t)$.

Let us now remark  that the estimate $\abs{v_n(x,t)}\le \abs{x}^{\alpha-1}$ together with Lemma \ref{holderlem} of the Appendix lead to the following: if  $\alpha \in (2-d,1)$ then
\begin{equation} \label{holderrr}
\abs{\sigma_n^t(x)-\sigma_n^s(x)} \le C_\alpha  \abs{t-s}^{\frac{1}{2-\alpha}}
\end{equation}
for all $x\in B(0,1)$ and for all $t,s \ge0$. Here $C_\alpha:= (2-\alpha)^{\frac{1}{2-\alpha}}$. If $\alpha \in [1,2)$ then $v(x,t)\le 1$ on
$B(0,1) \times [0,+\infty)$ and therefore we get
\begin{equation} \label{qwerty}
\abs{\sigma_n^t(x)-\sigma_n^s(x)} \le  \abs{t-s}
\end{equation} for all $x\in B(0,1)$ and for all $t,s \ge0$.
 Using Lemma~\ref{canizo} from the Appendix, together with \eqref{holderrr}
   we  obtain that,  if $\alpha \in (2-d,1)$ then
 $$W_2(\rho_{n+1}(t),\rho_{n+1}(s))\leq \|\sigma_n^t-\sigma_n^s\|_{L^\infty(B(0,1))}\leq C_\alpha |t-s|^{\frac{1}{2-\alpha}}.$$
 This prove that $t \mapsto \rho_{n+1}(t)$ is H\"older continuous with respect to $W_2$ when $\alpha \in (2-d,1)$. If $\alpha \in [1,2)$, we obtain from \eqref{qwerty} that $t \mapsto \rho_{n+1}(t)$ is Lipschitz continuous with respect to $W_2$.
 
 \
 
 Statement (ii) is a direct consequence of 
 Corollary \ref{cor:key-estimate}.   We now prove  (iii). Suppose $2-d< \alpha <1$.  Recall from Lemma \ref{phi-is-holder} that $\phi$ is  $\gamma$-H\"older continuous  for some $\gamma \in (0,1]$. 
Choose $\delta$ such that $\abs{t-s}< \delta$ implies $C_\alpha  \abs{t-s}^{\frac{1}{2-\alpha}} \le \epsilon/2$.
 Using Lemma \ref{phi-is-holder} and  estimate \eqref{holderrr} we obtain that $\abs{t-s}< \delta$ and $x \in A_\epsilon$ implies that
\begin{align}
\abs{v_n(x,t)-v_n(x,s)} &=  \abs{x}^{\alpha-1} \int_0^{+\infty} \abs{ \phi\p{\frac{\sigma^t_{n-1}(r)}{\abs{x}}}-\phi\p{\frac{\sigma^s_{n-1}(r)}{\abs{x}}}} d \hat{\rho}_{init}(r) \label{a1a}\\
& \le   \abs{x}^{\alpha-1-\gamma} \int_0^{+\infty} c\abs{ \sigma^t_{n-1}(r)-\sigma^s_{n-1}(r)}^\gamma d \hat{\rho}_{init}(r) \label{b2b}\\
& \le  c  \;C_\alpha  \abs{x}^{\alpha-1-\gamma} \abs{t-s}^{\frac{\gamma}{2-\alpha}} \label{c3c}.
\end{align}
The first equality is a simple consequence of formula  \eqref{formula}, the fact that $\rho_n(t)= \sigma^t_{n-1}\# \rho_{init}$ and the definition of the push forward.  
Note that $C_\alpha  \abs{t-s}^{\frac{1}{2-\alpha}} \le \epsilon/2$ and  \eqref{holderrr}  imply that  $\abs{ \frac{\sigma^t_{n-1}(r)}{\abs{x}}-\frac{\sigma^s_{n-1}(r)}{\abs{x}}} \le 1/2$ for $x \in A_\epsilon$. This allowed us to use  Lemma \ref{phi-is-holder} in order to go from \eqref{a1a} to \eqref{b2b}. The case $\alpha \in [1,2)$ is dealt with similarly.

 \

We finally prove (iv). Obviously  $\rho_1(t) \succ \rho_0(t)\equiv \rho_{init}$ for all $t\ge0$. Assume that for a given $n$,  $\rho_n(t) \succ \rho_{n-1}(t)$ for all $t \ge 0$. Then  \eqref{comp2} implies $|v_n(x,t)|\ge |v_{n-1}(x,t)|$. Lemma \ref{vel-comp}, which is proven in the next section, implies then that $\rho_{n+1}(t) \succ \rho_{n}(t)$ for all $t \ge 0$. 
\qed

 As already mentioned, (ii) (iii) and (iv) imply that the sequence $\{v_n\}$ converges uniformly on $A_\epsilon \times [0,+\infty)$ to some function $v$ (which is Lipschitz continuous in space away from the origin). Setting $v(0,t)=0$ we obtain a velocity field well defined on $B(0,1) \times [0,+\infty)$. This velocity field $v(x,t)$ generates a flow map $\sigma^t: B(0,1) \to B(0,1)$. 

\vspace{.4cm}

\begin{proposition} \label{prop:unif-conv}
 $\sigma_n(x,t)$ converges uniformly to $\sigma(x,t)$ on $B(0,1) \times [0,+\infty)$.
\end{proposition}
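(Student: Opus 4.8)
The plan is to exploit the radial, inward-pointing structure of the $v_n$ to turn the claim into a scalar statement, and then to marry a Gronwall estimate on a bounded time window away from the origin with the trivial bound valid near the origin. Since each $v_n$, and hence the limit $v$, lies in $\vel$ (so is radially symmetric and points inward), uniqueness of the flow from Proposition~\ref{focussing} forces $\sigma_n^t$ to preserve rays: for $x\neq0$ one has $\sigma_n(x,t)=\psi_n(|x|,t)\,x/|x|$ with $\psi_n(\cdot,0)=\mathrm{id}$ and $\dot\psi_n=-|v_n(\psi_n e_1,t)|\le0$, so $t\mapsto|\sigma_n(x,t)|$ is nonincreasing and $|\sigma_n(x,t)|\le|x|$, and likewise for $\sigma$. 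Because $\sigma_n(x,t)$ and $\sigma(x,t)$ sit on the same ray, $|\sigma_n(x,t)-\sigma(x,t)|=\big|\,|\sigma_n(x,t)|-|\sigma(x,t)|\,\big|\le\max\!\big(|\sigma_n(x,t)|,|\sigma(x,t)|\big)$. In particular on $\{|x|\le2\epsilon\}$ this is $\le2\epsilon$ for every $n$ and $t$, so it remains to handle $|x|>2\epsilon$.

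The engine for the region $|x|>2\epsilon$ is a \emph{uniform exit time} from the annulus. By Proposition~\ref{prop:it}(iv) and transitivity of $\succ$ we have $\rho_n(t)\succ\rho_{init}$; hence \eqref{comp2} together with the strict positivity and monotonicity of $\phi$ from Lemma~\ref{dong} yields a constant $c_\epsilon>0$, independent of $n$ and $t$, with $|v_n(y,t)|\ge|\nabla K*\rho_{init}|(y)\ge c_\epsilon$ whenever $\epsilon\le|y|<1$; the same lower bound passes to $v$. Thus while a trajectory is in $A_\epsilon$ its radius decreases at rate at least $c_\epsilon$, and — the radius being nonincreasing, it cannot re-enter — it leaves $A_\epsilon$ by time $T_\epsilon:=(1-\epsilon)/c_\epsilon$, uniformly in $n$ and $x$. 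On $[0,\tau_n(x)]$, where $\tau_n(x)\le T_\epsilon$ is the first time either $\sigma_n(x,\cdot)$ or $\sigma(x,\cdot)$ reaches radius $\epsilon$, both trajectories stay in $A_{\epsilon/2}$, on which every $v_n$ is Lipschitz in $x$ with the common constant $L_{\epsilon/2}$ (Proposition~\ref{prop:it}(ii), which also passes to $v$) and $v_n\to v$ uniformly on $A_{\epsilon/2}\times[0,\infty)$. A routine Gronwall argument then gives
$$|\sigma_n(x,t)-\sigma(x,t)|\le \|v_n-v\|_{L^\infty(A_{\epsilon/2}\times[0,\infty))}\;T_\epsilon\;e^{L_{\epsilon/2}T_\epsilon}=:\delta_n \qquad\text{on }[0,\tau_n(x)],$$
with $\delta_n\to0$ as $n\to\infty$ since $\epsilon$ (hence $T_\epsilon,L_{\epsilon/2}$) is fixed.

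To close the argument, at $t=\tau_n(x)$ the two radii differ by at most $\delta_n$ and one of them equals $\epsilon$, so both are $\le\epsilon+\delta_n$; as radii are nonincreasing, for all $t\ge\tau_n(x)$ we still have $|\sigma_n(x,t)|,|\sigma(x,t)|\le\epsilon+\delta_n$, whence $|\sigma_n(x,t)-\sigma(x,t)|\le\epsilon+\delta_n$. Combining the regimes, $\|\sigma_n-\sigma\|_{L^\infty(B(0,1)\times[0,\infty))}\le\max(2\epsilon,\ \epsilon+\delta_n)$; given a target accuracy one fixes $\epsilon$ small and then $n$ large, which proves uniform convergence on $B(0,1)\times[0,\infty)$.

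The step I expect to be the real obstacle is obtaining uniformity over the \emph{whole} half-line $t\in[0,\infty)$ rather than over compact time intervals: a naked Gronwall bound carries an $e^{L_\epsilon t}$ factor that is worthless as $t\to\infty$, and a trajectory can drift from ``bounded away from the origin'' into ``near the origin.'' Both difficulties are dissolved by the uniform exit time $T_\epsilon$, which confines the only delicate estimate to a fixed finite window and which itself rests on the concentration comparison $\rho_n(t)\succ\rho_{init}$ of Proposition~\ref{prop:it}(iv) together with the quantitative positivity of $\phi$ in Lemma~\ref{dong}; once a trajectory has entered $B(0,\epsilon)$ the inward, ray-preserving structure makes the remaining estimate trivial. (On each fixed interval $[0,T]$ one could instead invoke Dini's theorem, since (iv) and a scalar ODE comparison show $\psi_n(r,t)\downarrow\psi(r,t)$ with continuous limit; but the stated global uniformity still needs the tail estimate above.)
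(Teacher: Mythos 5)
Your proof is correct and follows essentially the same strategy as the paper's: a uniform-in-$n$ exit time $T_\epsilon$ from the annulus (obtained, as in the paper, from the time-independent lower bound $|v_n|\ge|\nabla K*\rho_{init}|$), a Gronwall estimate confined to the finite window $[0,T_\epsilon]$ while the trajectories stay away from the origin, and the observation that once a trajectory enters $B(0,\epsilon)$ the inward, ray-preserving flow makes the comparison trivial. The only cosmetic difference is that you control the annulus regime with a stopping time for either trajectory, whereas the paper uses the ordering $|\sigma_n^s(x)|\ge|\sigma^s(x)|$ coming from $|v_n|\le|v|$; both are valid.
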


\begin{proof}
Let $\epsilon>0$ be fixed. From formula \eqref{formula} it is clear that $\abs{v_0}$ is strictly positive away from the origin. Since $\abs{v_{n+1}} \ge \abs{v_n}$ we have that $\abs{v}$ is also strictly positive away from the origin. Therefore there exists a time
 $T_\epsilon>0$   such that $\sigma^{T_\epsilon}(B(0,1)) \subset B(0,\epsilon)$.
 Choose $N$ so that  $n \ge N$ implies $\norm{v-v_n}_{L^\infty(A_\epsilon \times [0,T_\epsilon])} \le  \epsilon / (T_\epsilon e^{L_\epsilon T_\epsilon})$. 

Case 1: Assume first that $(x,t) \in B(0,1) \times [0,+\infty)$ is such that $\abs{\sigma^t(x)} \ge \epsilon$. Note that  such a $t$ is necessarily smaller than $T_\epsilon$. For all $\tau \le t$ and for all $n \ge 0$ we  have
\begin{align*}
\abs{\sigma^\tau(x)-\sigma^\tau_n(x)} &\le \int_0^\tau \abs{v(\sigma^s(x),s)-v_n(\sigma_n^s(x),s)} ds\\
&  \le \int_0^\tau \abs{v(\sigma^s(x),s)-v_n(\sigma^s(x),s)}  + \abs{v_n(\sigma^s(x),s)-v_n(\sigma_n^s(x),s)} ds\\
&  \le  \tau \norm{v-v_n}_{L^\infty(A_\epsilon \times [0,\tau])}  + L_\epsilon \int_0^\tau\abs{\sigma^s(x) -\sigma_n^s(x)} ds. 
\end{align*}
We have use the fact that  $\abs{\sigma^t(x)} \ge \epsilon$ implies that $\abs{\sigma^s(x)} \ge \epsilon$ for all $s \le \tau \le t$. We have also use the fact that, since $\abs{v} \ge \abs{v_n}$,  $\abs{\sigma_n^s(x)} \ge \abs{\sigma^s(x)} \ge \epsilon$ for all $s \le \tau \le t$ and for all $n\ge 0$.
Using  Gronwall's lemma and the fact that $t \le T_\epsilon$ we obtain that for $n\ge N$:
\begin{equation} \label{mario1}
\abs{\sigma^t(x)-\sigma^t_n(x)} \;  \le \;
T_\epsilon  \norm{v-v_n}_{L^\infty( A_\epsilon \times [0,T_\epsilon])} \;  e^{L_\epsilon T_\epsilon} \le \epsilon.
\end{equation}

Case 2: Assume that $(x,t) \in B(0,1) \times [0,+\infty)$ is such that $|x| < \epsilon$. 
Since the velocity fields $v$ and $v_n$ are focussing we clearly have that $\abs{\sigma^t(x)-\sigma^t_n(x)}  < 2 \epsilon  $  for all $n$.

Case 3: Assume finally that $(x,t) \in B(0,1) \times [0,+\infty)$ is such that $\abs{\sigma^t(x)} < \epsilon$ and $|x| \ge \epsilon$.
Since $\tau \mapsto \sigma^{\tau}(x)$ is continuous there exists a time $s \in [0, t]$ such that  
$|\sigma^s(x)|= \epsilon$. So from case 1 we get $\abs{\sigma^s(x)-\sigma^s_n(x)}\le \epsilon$ for $n \ge N$. Since  $|\sigma^s(x)|= \epsilon$ we have that $\abs{\sigma^s_n(x)}\le 2\epsilon$ for $n \ge N$. Since $s\le t$ we have $\abs{\sigma^t_n(x)} \le \abs{\sigma^s_n(x)}\le 2\epsilon$ for $n \ge N$. Therefore $\abs{\sigma^t(x)-\sigma^t_n(x)}  < 3 \epsilon  $  for all $n \ge N$.
\end{proof}

\

We are now ready to prove the Theorem \ref{thm:rd}: 

{ \it Proof of  Theorem \ref{thm:rd}.} 
Define $\rho(t):=\sigma^t \# \rho_{init}$. Recall that from Lemma~\ref{canizo} of the Appendix
$$
\calW_2(\rho,\rho_n):= \sup_{t\in [0,\infty)}  W_2(\rho(t),\rho_n(t)) \le \norm{\sigma-\sigma_n}_{L^\infty( B_1 \times [0,+\infty))}.$$ 
So from Proposition \ref{prop:unif-conv} we get that
$\calW_2(\rho_n-\rho)\to 0$. This implies in particular that for every $t \in [0,+\infty)$, $\rho_n(t)$ converges narrowly to $\rho(t)$. Since narrow convergence preserves the monotonicity (Lemma \ref{preserve} of the Appendix), we know that $\rho(t)$ is radially symmetric decreasing.

We are now going to prove  that $\rho$ and $\sigma$ satisfy \eqref{sol1aa} and \eqref{sol2aa}.
Since $\rho$ is defined by $\rho(t)=\sigma^t \# \rho_{init}$ where $\sigma^t: \real^d \to \real^d$ is the flow map associated to the velocity field $v(x,t)$, we just need to prove that 
$v(x,t)= -(\nabla K * \rho(t))(x) $ for $x \neq 0$. This is obtain by passing to the limit in the relation  $v_n(x,t)= -(\nabla K * \rho_n(t))(x)$ for $x \neq 0$.  Indeed $v_n$ converges pointwise to $v$ in $A_0 \times [0,+\infty)$. And since for fixed $t$, $\rho_n(t)$ converges narrowly to $\rho(t)$, we obtain from (\ref{formula}) that $\nabla K * \rho_n$  converges pointwise to $\nabla K * \rho$  in $A_0 \times [0,+\infty)$.
\qed

\section{Instantaneous mass concentration}
\label{instant}

\

 The recent work of \cite{BLR,HJD} concerns local well-posedness of the problem with initial data in $L^p$. One can prove
a sharp condition on $p$ for local well-posedness by considering a family of initial data that behave as a powerlaw near the origin. Such initial conditions satisfy the monotonicity assumptions considered in this paper.
In this section, using existence results from the prior section
and a bootstrap argument, we
prove results about the behavior of these solutions 
as measure solutions that concentrate mass.  Such results are not discussed in the prior literature for the singular power law potential $K(x)=|x|^\alpha$, $\alpha<1$. 
%

More specifically in \cite{BLR} it was proven that the aggregation equation with potential  $\nabla K(x)= x\abs{x}^{\alpha-2}$, $2-d<  \alpha<2$, is locally well posed in any $L^p$-space with $p>\frac{d}{d + \alpha -2}$.   Note that  given $\beta \in (\frac{d+\alpha-2}{d},1)$ the function  
 $$h(x)=
 \begin{cases}
 \frac{c}{|x|^{{d + \alpha-2}}}\;  \frac{1}{(-\log\abs{x})^{\beta}}& \text{ if } \abs{x} \le 1\\
 0 & \text{ otherwise}
 \end{cases}
 $$
  belongs to the critical space $L^{\frac{d} {d + \alpha -2}}(\real^d)$ but does not belong to any $L^p$ space with $p>\frac{d}{d + \alpha -2}$.
 In \cite{HJD} it was proved that if the initial data is exactly equal to $h(x)$ then a solution of the aggregation equation instantaneously leaves the space $L^{\frac{d} {d + \alpha -2}}$. 
  In this section we go a
little further and show that the solution not only leaves $L^{{{d} \over {d
+ \alpha -2}}}$ but also instantaneously concentrates some point mass at the
origin.  Our results make use of the existence theory from the previous section.
 Also compared to the  work in \cite{BLR} and \cite{HJD},  our argument here is local in essence and holds  for any radially symmetric decreasing initial data which is locally more singular than $h(x)$ at the origin. The main theorem of the section is the following:
\begin{theorem} \label{main-theorem} Let $\nabla K(x)=x \abs{x}^{\alpha-2}$, $2-d<  \alpha<2$.
Suppose $\rho_{init} \in \prob_{RD}(\real^d)$  is compactly supported and absolutely  continuous with respect to the Lebesgue measure.
 Suppose that there exists $c>0$, $r_0>0$ and $\beta \in (\frac{d+\alpha-2}{d},1)$ such that the
 density $u_{init}$ of $\rho_{init}$ satisfies
\begin{equation} 
u_{init}(x) \ge \frac{c}{|x|^{d + \alpha-2}}\;  \frac{1}{(-\log\abs{x})^{\beta}} \quad \text{ for all } |x|<r_0.
\end{equation}
Suppose finally that $\rho \in C([0,+\infty), \prob_{RD}(\real^d))$ satisfies the Lagrangian formulation \eqref{sol1aa}-\eqref{sol2aa} of the aggregation equation. Then  $\rho(t)(\{0\})>0$ for all $t>0$. 
\end{theorem}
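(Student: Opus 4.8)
The plan is to show that the ball $\overline{B(0,R(t))}$ that collapses to the origin under the flow map $\sigma^t$ acquires positive radius for every $t>0$, and that it contains enough initial mass. By Corollary~\ref{push-forward}, $\rho(t)(\{0\}) = m(t) = m_0 + \int_{(\sigma^t)^{-1}(\{0\})} u_{init}(x)\,dx$, and since $(\sigma^t)^{-1}(\{0\}) = \overline{B(0,R(t))}$, it suffices to prove $R(t)>0$ for all $t>0$. The lower bound on $u_{init}$ guarantees the integral is then automatically positive.

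First I would reduce to a scalar ODE for the inner characteristic. By radial symmetry and the focusing property, a characteristic starting at radius $r$ stays on the sphere of some radius $\ell(r,t)$, and $\ell$ solves $\dot\ell = -\lambda(\ell,t)\ell$ where $-\lambda(\ell,t)\ell$ is the radial component of $v$. Using Lemma~\ref{dong} (formula \eqref{formula}) together with the comparison \eqref{comp2}, I would estimate $|v(x,t)|$ from below by isolating the mass of $\rho(t)$ already inside the ball of radius $|x|$: since $\phi$ is non-increasing and positive, $|v(x,t)| = |x|^{\alpha-1}\int_0^\infty \phi(r/|x|)\,d\hat\rho(t)(r) \ge |x|^{\alpha-1}\,\phi(1)\,\hat\rho(t)([0,|x|]) = |x|^{\alpha-1}\,\rho(t)(\overline{B(0,|x|)})$. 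Because $\rho(t) = \sigma^t\#\rho_{init}$ and the flow is focusing, $\rho(t)(\overline{B(0,\ell(r,t))}) \ge \rho_{init}(\overline{B(0,r)})$ — mass only moves inward. Now $\rho_{init}(\overline{B(0,r)}) \ge \int_{B(0,r)} u_{init} \ge c' \int_0^r s^{d-1} s^{-(d+\alpha-2)}(-\log s)^{-\beta}\,ds$, which behaves like $r^{2-\alpha}(-\log r)^{-\beta}$ as $r\to 0$. Combining, the inner radius $R(t)$ (the supremum of initial radii whose characteristic has already reached $0$ by time $t$) is controlled by: a characteristic starting at small radius $r$ satisfies $\dot\ell \le -c'' \ell^{\alpha-1}\cdot \ell^{2-\alpha}(-\log \ell)^{-\beta} = -c''\,\ell\,(-\log\ell)^{-\beta}$ (using that $\ell\le r$ so the mass inside $\ell$'s sphere is at least the initial mass inside radius $r$ — wait, more carefully, I use $\rho(t)(\overline{B(0,\ell)})\ge \rho_{init}(\overline{B(0,r)})\gtrsim r^{2-\alpha}(-\log r)^{-\beta}$ with $r$ the \emph{starting} radius, giving $\dot\ell \le -c''\ell^{\alpha-1} r^{2-\alpha}(-\log r)^{-\beta}$).

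Then I would integrate this differential inequality. Setting $w = \ell^{2-\alpha}$, we get $\dot w = (2-\alpha)\ell^{1-\alpha}\dot\ell \le -(2-\alpha)c''\, r^{2-\alpha}(-\log r)^{-\beta}$, so $w$ reaches $0$ — i.e. the characteristic hits the origin — in time at most $\frac{w(0)}{(2-\alpha)c'' r^{2-\alpha}(-\log r)^{-\beta}} = \frac{r^{2-\alpha}}{(2-\alpha)c'' r^{2-\alpha}(-\log r)^{-\beta}} = \frac{(-\log r)^{\beta}}{(2-\alpha)c''}$. Since $\beta<1$, this bound $\to 0$ \emph{slower} than any power but the point is simply that it is \emph{finite} for each fixed $r>0$, and — crucially — by choosing $r$ small it does \emph{not} tend to $0$; rather $(-\log r)^\beta\to\infty$. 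That is the wrong direction, so instead I run the argument the other way: fix $t>0$; I must find $r>0$ with characteristic-hitting-time $\le t$. The bound says hitting time $\le C(-\log r)^\beta$ is useless for small $r$; the correct move is to note that once \emph{any} mass is at the origin the velocity near the origin is enhanced, and iterate — but cleaner is to observe that $\beta<1$ makes $\int_0 \frac{(-\log s)^{\beta}}{s}\,ds$ divergent while the relevant time integral $\int \frac{ds}{s(-\log s)^{-\beta}}\cdot(\text{something})$ converges; concretely, rewrite: along the characteristic, $\frac{d}{dt}(-\log\ell) = -\dot\ell/\ell \ge c''(-\log\ell)^{-\beta}$ — here I bound the inside-mass at radius $\ell$ directly by $\gtrsim \ell^{2-\alpha}(-\log\ell)^{-\beta}$, giving $|v|\gtrsim \ell^{\alpha-1}\cdot\ell^{2-\alpha}(-\log\ell)^{-\beta}=\ell(-\log\ell)^{-\beta}$. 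Then with $\psi = (-\log\ell)^{1+\beta}$ one has $\dot\psi \ge (1+\beta)c''>0$, so $\psi\to\infty$, i.e. $\ell\to 0$, in finite time; and the hitting time starting from radius $r_0$ is at most $\frac{(-\log r_0)^{1+\beta}}{(1+\beta)c''}$, which is some finite $T_0$. For $t\ge T_0$ this already gives $R(t)\ge r_0>0$. For $0<t<T_0$ I repeat the estimate but now the lower bound on the in-ball mass holds for all radii up to $r_0$ (not just asymptotically), so the characteristic from radius $r$ reaches the origin in time $\le \frac{(-\log r)^{1+\beta} - (-\log\text{(current radius)})^{1+\beta}}{(1+\beta)c''}\le\frac{(-\log r)^{1+\beta}}{(1+\beta)c''}$; inverting, for any $t>0$ the characteristic starting from $r = \exp\big(-((1+\beta)c''t)^{1/(1+\beta)}\big)>0$ reaches the origin by time $t$, so $R(t)\ge r>0$.

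The main obstacle is making the comparison "mass only moves inward" fully rigorous at the level of the constructed solution and bounding $\rho(t)(\overline{B(0,\ell)})$ from below by $\rho_{init}(\overline{B(0,r)})$ uniformly along characteristics — this requires knowing that the map $r\mapsto \ell(r,t)$ is monotone (from the focusing structure in $\vel$ and Proposition~\ref{focussing}) and that $\sigma^t\#\rho_{init}$ restricted to $\overline{B(0,\ell(r,t))}$ has mass at least that of $\rho_{init}$ on $\overline{B(0,r)}$, i.e. $(\sigma^t)^{-1}(\overline{B(0,\ell(r,t))})\supseteq\overline{B(0,r)}$. I would also need to handle the degenerate possibility that a positive-radius core has \emph{already} collapsed (so $\ell\equiv 0$ on an interval), which only helps. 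The differential-inequality bookkeeping with the logarithmic correction is routine once the geometric mass-monotonicity input is in place.
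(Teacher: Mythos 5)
There is a fatal gap in the ODE integration at the heart of your argument. From the bound $\abs{v}\gtrsim \ell\,(-\log\ell)^{-\beta}$ you set $\psi=(-\log\ell)^{1+\beta}$ and correctly obtain $\dot\psi\ge(1+\beta)c''>0$, but you then conclude that ``$\psi\to\infty$, i.e.\ $\ell\to 0$, in finite time.'' That is false: a linear lower bound on $\dot\psi$ only gives $\psi(t)\ge\psi(0)+(1+\beta)c''t$, so $\psi$ reaches $+\infty$ only as $t\to\infty$, and the differential inequality yields $\ell(t)\le\exp\bigl(-(\psi(0)+(1+\beta)c''t)^{1/(1+\beta)}\bigr)$, which is strictly positive for every finite $t$. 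Your ``hitting time'' $(-\log r)^{1+\beta}/((1+\beta)c'')$ is the time for $\psi$ to increase by a finite amount, not the time to reach the origin (which requires $\psi=\infty$). So no characteristic collapses, $R(t)=0$, and the direct argument proves nothing. This is not a repairable bookkeeping slip: even the paper's \emph{stronger} velocity bound $\abs{v}\ge C\abs{x}(-\ln\abs{x})^{1-\beta}$ (Proposition~\ref{vel-est}(iii), which uses the dominant far-field contribution $r>\abs{x}$ in \eqref{formula} that your inside-mass estimate discards) produces the flow \eqref{c}, whose trajectories only approach the origin as $t\to\infty$. The paper states this obstruction explicitly: the strategy of collapsing characteristics in finite time works for $f_{\epsilon,r_0}$ but fails for $g_{r_0}$ and $h_{\beta,r_0}$.

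The missing idea is the bootstrap. Although the flow generated by $h_{\beta,r_0}$ does not collapse anything in finite time, it instantaneously deforms the density into something more singular: for any $t_1>0$ one shows $\rho(t_1)\triangleright c_1 g_{r_1}$ via the explicit change of variables for $\sigma_3^t\#\hat h$; then the stronger field generated by $g_{r_1}$ upgrades the singularity again to $\rho(t_2)\triangleright c_2 f_{\epsilon,r_2}$; and only the field $\abs{v}\ge C\abs{x}^{1-\epsilon}$ generated by $f_{\epsilon,r_2}$ collapses the whole ball $B(0,(\epsilon Ct)^{1/\epsilon})$ to the origin in time $t$, producing the point mass. You in fact mention the possibility of iterating (``once any mass is at the origin the velocity near the origin is enhanced, and iterate'') before discarding it for the flawed direct computation; that iteration, made quantitative through the comparison machinery of Lemma~\ref{comp-lemma} and the push-forward formulas \eqref{push-formula}, is exactly the proof. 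Your framework for the comparison principles (mass only moves inward, $\rho(t)(\overline{B(0,\ell(r,t))})\ge\rho_{init}(\overline{B(0,r)})$) is sound and matches the paper's, but without the two intermediate singularity-upgrading steps the conclusion does not follow.
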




%

%

\subsection{Comparison principles}

\

In this subsection we derive a few comparison principles which will be necessary in order to make the arguments local.

\begin{lemma} \label{vel-comp}
Suppose $v_1,v_2 \in \vel$ and  $\abs{v_1} \ge \abs{v_2}$. Then 
$$
\sigma_1^t  \# \mu \succ \sigma_2^t \# \mu \qquad \text{ for all  } \mu \in \prob_R(\real^d) \text{ and } t \ge 0
$$
where  $\sigma_1$ and $\sigma_2$ are the  flow maps associated to $v_1$ and $v_2$ respectively.  

\end{lemma}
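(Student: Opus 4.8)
\emph{Proof sketch.} The plan is to exploit the radial, inward‑pointing structure of $v_1,v_2$ to reduce the statement to a one–dimensional comparison. Since $\sigma_i^t$ maps $B(0,1)$ into itself, $\mu$ is supported in $B(0,1)$ and $\hat\mu$ in $[0,1)$. Because $v_i\in\vel$, we have $\sigma_i^t(0)=0$ and, for $x\ne0$, $\sigma_i^t(x)=\rho_i(|x|,t)\,x/|x|$, where for each fixed $r\in[0,1)$ the function $t\mapsto\rho_i(r,t)$ solves the scalar ODE $\dot\rho=-f_i(\rho,t)$, $\rho(r,0)=r$, with $f_i(s,t):=\lambda_i(s,t)\,s\ge0$; by (P2) each $f_i$ is locally Lipschitz in $s$ on $(0,1)$, uniformly in $t$. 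The hypothesis $|v_1|\ge|v_2|$ is exactly $f_1\ge f_2$ on $(0,1)\times[0,+\infty)$.

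First I would prove the scalar comparison $\rho_1(r,t)\le\rho_2(r,t)$ for all $r\in[0,1)$, $t\ge0$. Both functions are non‑increasing in $t$ with values in $[0,r]$; once $\rho_1(r,\cdot)$ reaches $0$ it stays there, since $v_i(0,t)=0$ and forward uniqueness holds (Proposition \ref{focussing}), so $0=\rho_1\le\rho_2$ from then on, while on any time interval where $\rho_1(r,\cdot)>0$ a Gronwall estimate at the first putative crossing time, using $f_1\ge f_2$ together with the local Lipschitz bound for $f_1$ near the (positive) crossing value, rules out $\rho_1>\rho_2$. I would also record that $r\mapsto\rho_2(r,t)$ is non‑decreasing (this is already used in the proof of Proposition \ref{prop:it}), continuous (continuity of $\sigma_2$), equal to $0$ exactly on $[0,R_2(t)]$ with $R_2(t)$ as in Proposition \ref{focussing}, and \emph{strictly} increasing on $(R_2(t),1)$ — strictness coming from backward uniqueness of $\dot\rho=-f_2(\rho,t)$ away from the origin, since two characteristics agreeing at a positive value at time $t$ would agree for all earlier times, hence at $t=0$.

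Next I would radialize the push‑forward: unwinding Definition \ref{definition:radial} and the definition of push‑forward, for every Borel $I\subset[0,+\infty)$,
\[
\widehat{\sigma_i^t\#\mu}(I)=\mu\big(\{y:\rho_i(|y|,t)\in I\}\big)=\hat\mu\big(R_{i,t}^{-1}(I)\big)=(R_{i,t}\#\hat\mu)(I),
\]
where $R_{i,t}(r):=\rho_i(r,t)$, so $\widehat{\sigma_i^t\#\mu}=R_{i,t}\#\hat\mu$. I would then define $T:[0,+\infty)\to[0,+\infty)$ by $T(0)=0$, $T(s):=\rho_1\big((R_{2,t})^{-1}(s),t\big)$ for $s$ in the (open interval) range of the continuous strictly increasing map $R_{2,t}|_{(R_2(t),1)}$, and $T(s):=s$ otherwise. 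By the previous step $T$ is a Borel map with $T(s)\le s$ for all $s$, and $T\circ R_{2,t}=R_{1,t}$ pointwise on $[0,1)$: for $r>R_2(t)$ this is the definition of $T$, and for $r\le R_2(t)$ both sides are $0$ since $\rho_1\le\rho_2=0$ there. Therefore
\[
\widehat{\sigma_1^t\#\mu}=R_{1,t}\#\hat\mu=(T\circ R_{2,t})\#\hat\mu=T\#\big(R_{2,t}\#\hat\mu\big)=T\#\widehat{\sigma_2^t\#\mu},
\]
which is precisely the assertion $\sigma_1^t\#\mu\succ\sigma_2^t\#\mu$ of Definition \ref{def:concentration}.

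The point I expect to be most delicate is the strict monotonicity and invertibility of $R_{2,t}$ away from the origin: the Lipschitz constant of $f_2$ degenerates as $\rho\to0$, and the whole ball $\overline{B(0,R_2(t))}$ is collapsed onto the origin, which is exactly why $T$ must be built separately on the range of $R_{2,t}|_{(R_2(t),1)}$ and on its complement, and why a little care is needed to see that $T$ is a genuine Borel \emph{map} rather than merely a transport plan.
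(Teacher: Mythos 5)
Your proposal is correct and is essentially the same argument as the paper's: the paper sets $\Lambda_2^t:=(\sigma_2^t)^{-1}$ (extended by $\Lambda_2^t(0)=0$), observes that $(\sigma_1^t\circ\Lambda_2^t)\#(\sigma_2^t\#\mu)=\sigma_1^t\#\mu$, and asserts $\abs{(\sigma_1^t\circ\Lambda_2^t)(x)}\le\abs{x}$ from $\abs{v_1}\ge\abs{v_2}$; your $T$ on radii is exactly the radial reduction of $\sigma_1^t\circ\Lambda_2^t$, and your Gronwall/scalar-comparison step supplies the justification for that last inequality, which the paper leaves implicit.
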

\begin{proof} Since $v_2 \in  \vel$ the flow map $\sigma_2^t$ is invertible away from the origin.   Define $\Lambda^t_2(x)=(\sigma_2^t)^{-1}(x)$ if $x \neq 0$ and $\Lambda_2^t(0)=0$. One can then easily check that
$$(\sigma_1^t \circ \Lambda_2^t) \# (\sigma_2^t\# \mu)= \sigma_1^t\# \mu$$
Moreover since $\abs{v_1} \ge \abs{v_2}$ we have that   $\abs{(\sigma_1^t \circ \Lambda_2^t)(x)} \le \abs{x}$, which concludes the proof.
\end{proof}


\begin{lemma}
Suppose $v \in \vel$. Suppose also that  $\mu, \nu \in \prob_R(\real^d)$ and  $\mu \succ \nu$. Then 
$$
\sigma^t  \# \mu \succ \sigma^t \# \nu \qquad \text{ for all } t \ge 0
$$
where  $\sigma$ is the  flow maps associated to $v$. 
\end{lemma}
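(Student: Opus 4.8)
The statement asserts that the flow map associated with a fixed velocity field $v \in \vel$ preserves the concentration ordering: if $\mu \succ \nu$ then $\sigma^t \# \mu \succ \sigma^t \# \nu$. Since both $\mu$ and $\nu$ are radially symmetric, the entire question reduces to the corresponding statement about the pushed-forward one-dimensional measures $\widehat{\sigma^t\#\mu}$ and $\widehat{\sigma^t\#\nu}$ on $[0,+\infty)$, using Definition~\ref{def:concentration}. The key structural fact is that $v$ is radially symmetric and pointing inward, so the flow map acts radially: there is a scalar flow $s \mapsto \Sigma(s,t)$ on $[0,+\infty)$ (the radial component of $\sigma^t$) such that $\widehat{\sigma^t\#\mu} = \Sigma^t \# \hat\mu$ for every $\mu \in \prob_R(\real^d)$, where $\Sigma^t(s) = \Sigma(s,t)$.

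\textbf{Main steps.} First I would record the radial reduction: because $v(x,t) = -\lambda(|x|,t)\,x$, the ODE $\dot x = v(x,t)$ preserves rays through the origin, so $|\sigma^t(x)|$ depends only on $|x|$ and defines $\Sigma^t$; moreover the focusing property (solutions only leave $B(0,1)\setminus\{0\}$ by reaching the origin, where they stay) shows $\Sigma^t : [0,+\infty) \to [0,+\infty)$ is well-defined and pushes $\hat\mu$ to $\widehat{\sigma^t\#\mu}$. Second, the crucial monotonicity property of the scalar flow: \emph{$\Sigma^t$ is non-decreasing in $s$}, i.e. $s_1 \le s_2 \implies \Sigma^t(s_1) \le \Sigma^t(s_2)$. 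This is exactly the statement ``$|x|\le|y|$ implies $|\sigma^s(x)|\le|\sigma^s(y)|$'' that was already invoked in the proof of Proposition~\ref{prop:it} (it follows from uniqueness of the ODE away from the origin, since distinct radial solutions cannot cross, together with the sticky behavior at $0$). Third, unwind the hypothesis $\mu \succ \nu$: by definition $\hat\mu = T\#\hat\nu$ for some Borel $T : [0,+\infty)\to[0,+\infty)$ with $T(r)\le r$. Then
$$
\widehat{\sigma^t\#\mu} = \Sigma^t\#\hat\mu = \Sigma^t\# (T\#\hat\nu) = (\Sigma^t\circ T)\#\hat\nu,
$$
while $\widehat{\sigma^t\#\nu} = \Sigma^t\#\hat\nu$. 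So I need a Borel map $S$ with $S \le \mathrm{id}$ and $\widehat{\sigma^t\#\mu} = S\#\widehat{\sigma^t\#\nu}$. The natural candidate is $S = \Sigma^t \circ T \circ (\Sigma^t)^{-1}$ on the range of $\Sigma^t$ away from $0$ (and $S(0)=0$), using that $\Sigma^t$ restricted to $[R(t),+\infty)$ is a strictly increasing bijection onto $(0,+\infty)$ by Proposition~\ref{focussing}. For $r$ in the image of $\Sigma^t$: writing $r = \Sigma^t(q)$ with $q \ge R(t)$, we get $S(r) = \Sigma^t(T(q)) \le \Sigma^t(q) = r$, using $T(q)\le q$ and the monotonicity of $\Sigma^t$; thus $S \le \mathrm{id}$. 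A short check with the push-forward identity confirms $(\Sigma^t\circ T)\#\hat\nu = S\#(\Sigma^t\#\hat\nu)$, which is the desired $\sigma^t\#\mu \succ \sigma^t\#\nu$. (One must also verify the mass sitting at the origin behaves correctly, but since both $T$ and $\Sigma^t$ send $0$ to a point mapped into $\{0\}$ or collapsed to $0$, this is routine.)

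\textbf{Main obstacle.} The genuinely delicate point is handling the non-invertibility of $\Sigma^t$ at the origin: $\Sigma^t$ collapses the whole interval $[0,R(t)]$ to $0$, so $(\Sigma^t)^{-1}$ is only defined as a genuine map on $(0,+\infty)$, and I must check that the mass of $\hat\mu$ (resp. $\hat\nu$) lying in $[0,R(t)]$ — which all ends up as an atom at $0$ after pushing forward — is consistently accounted for by $S$. Concretely, since $T(r)\le r$ the preimage $T^{-1}([0,R(t)])$ contains $[0,R(t)]$, so the atom of $\widehat{\sigma^t\#\mu}$ at $0$ is at least that of $\widehat{\sigma^t\#\nu}$, and defining $S(0)=0$ (sending the atom to the atom) closes the argument. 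This is essentially the same bookkeeping as in Corollary~\ref{push-forward}, so once the radial reduction is in place the proof is short; alternatively, one could bypass the map-level construction entirely and argue via the cumulative-distribution characterization \eqref{equivalence} — monotonicity and continuity of $\Sigma^t$ immediately give $\widehat{\sigma^t\#\mu}([0,r]) \ge \widehat{\sigma^t\#\nu}([0,r])$ for all $r$ — but since the paper works with transport maps in Definition~\ref{def:concentration}, I would present the map-based version above.
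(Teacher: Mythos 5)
Your proof is correct and follows essentially the same route as the paper: the paper conjugates the given transport map by the flow, defining $\Lambda^t=(\sigma^t)^{-1}$ away from the origin (with $\Lambda^t(0)=0$) and taking $\sigma^t\circ P\circ\Lambda^t$ as the new transport map, then using that $\sigma^t$ preserves the radial ordering to verify $|\sigma^t(P(\Lambda^t(x)))|\le|x|$ — exactly your $S=\Sigma^t\circ T\circ(\Sigma^t)^{-1}$ written in $\real^d$ rather than in the radial variable. Your extra bookkeeping at the collapsed region $[0,R(t)]$ is the content the paper leaves to "one can easily check," and it is handled correctly.
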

\begin{proof}
Since $\mu \succ \nu$ there is a map $P$  satisfying   $\abs{P(x)} \le \abs{x}$ such that $\mu=P \#\nu$. As in the previous lemma,  define $\Lambda^t(x)=(\sigma^t)^{-1}(x)$ if $x \neq 0$ and $\Lambda^t(0)=0$.  One can then easily check that
$$(\sigma^t \circ P \circ \Lambda^t) \#  (\sigma^t \# \mu)=\sigma^t \# \nu$$
and $\abs{(\sigma^t \circ P \circ \Lambda^t)(x)} \le \abs{x}$ which conclude the proof.
\end{proof}



The following definition will be needed in order to compare two measures of different mass.
\begin{definition} Suppose $\rho \in \prob_R(\real^d)$ and $\mu \in \mes_R(\real^d)$,  with $\mu(\real^d) \le 1$. We write $\rho \triangleright \mu$ if  there exists a measure  $\nu \in \prob_R(\real^d)$ 
such that
$$
\rho \succ \nu \quad \text{and} \quad \nu(A) \ge \mu(A) \;\; \forall A \in \calB(\real^d).
$$
\end{definition}
In view of \eqref{comp1} and \eqref{comp2} it is clear that:
\begin{equation}  \label{comp3}
\rho \triangleright \mu  \quad  \Longrightarrow  \quad   \abs{\nabla K * \rho} \ge \abs{\nabla K * \mu } 
\end{equation}
The following Lemma will be useful in order to make localized comparisons.  
\begin{lemma} \label{comp-lemma}
Suppose $v_1, v_2 \in \vel$ and $\abs{v_1} \ge \abs{v_2}$ in  $B(0,2R) \times [0,+\infty)$ .
  Suppose also that $\rho \in \prob_R(\real^d)$, $\mu \in \mes_R(\real^d)$ and  $\rho \triangleright \mu$.  Then
   $$\sigma^t_1 \# \rho \;\;  \triangleright \; \; \sigma^t_2  \# (\mu \, \chi_{B(0,R)}) \qquad \text{ for all } t \ge 0,  $$   
   where  $\sigma_1$ and $\sigma_2$ are the  flow maps associated to $v_1$ and $v_2$ respectively, and $\chi_{B(0,R)}$ is the indicator function of the set $B(0,R)$. 
\end{lemma}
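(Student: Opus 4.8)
The idea is to reduce the localized statement to the already-established comparison lemmas by splitting the measure $\mu$ into its piece inside $B(0,R)$ and the rest, and then tracking where the inside piece goes under the two flows. Since $\rho \triangleright \mu$, there is $\nu \in \prob_R(\real^d)$ with $\rho \succ \nu$ and $\nu \ge \mu$ as measures. First I would apply the first comparison lemma (the one asserting $v \in \vel$, $\abs{v_1}\ge\abs{v_2}$ imply $\sigma_1^t\#\lambda \succ \sigma_2^t\#\lambda$) — but here the hypothesis $\abs{v_1}\ge\abs{v_2}$ only holds on $B(0,2R)$, so I must be careful: the composition argument $(\sigma_1^t\circ\Lambda_2^t)(x)$ with $\abs{(\sigma_1^t\circ\Lambda_2^t)(x)}\le\abs{x}$ requires the pointwise velocity inequality only along trajectories that remain in $B(0,2R)$. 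Because both fields are focusing (pointing inward), a trajectory of $v_2$ started in $B(0,R)$ stays in $B(0,R)\subset B(0,2R)$ for all time; hence the comparison $\abs{v_1}\ge\abs{v_2}$ is available everywhere it is needed to compare $\sigma_1^t$ and $\sigma_2^t$ on the support of $\mu\,\chi_{B(0,R)}$.

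Concretely, the plan is: (1) Restrict attention to $\mu_R := \mu\,\chi_{B(0,R)}$, which is supported in $\overline{B(0,R)}$ with $\mu_R(\real^d)\le\mu(\real^d)\le 1$, so $\nu \ge \mu_R$ still holds; thus $\rho \triangleright \mu_R$ with the same intermediate $\nu$. (2) Since $v_2$ is focusing, $\sigma_2^t(\overline{B(0,R)})\subset \overline{B(0,R)}$, so $\sigma_2^t\#\mu_R$ is supported in $\overline{B(0,R)}\subset B(0,2R)$, and throughout the relevant region $\abs{v_1}\ge\abs{v_2}$. Define $\Lambda_2^t$ as the inverse of $\sigma_2^t$ away from the origin (with $\Lambda_2^t(0)=0$), exactly as in Lemma \ref{vel-comp}; then $(\sigma_1^t\circ\Lambda_2^t)\#(\sigma_2^t\#\mu_R)=\sigma_1^t\#\mu_R$ and, because the velocity inequality holds along the trajectories involved, $\abs{(\sigma_1^t\circ\Lambda_2^t)(x)}\le\abs{x}$. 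This gives $\sigma_1^t\#\mu_R \succ \sigma_2^t\#\mu_R$ — but I actually need a domination of measures, not just concentration. (3) To close the argument, note $\rho\triangleright\mu_R$ means $\rho\succ\nu\ge\mu_R$; apply the second comparison lemma to get $\sigma_1^t\#\rho \succ \sigma_1^t\#\nu$, and observe that $\sigma_1^t\#\nu \ge \sigma_1^t\#\mu_R$ as measures (push-forward is monotone under the order of measures since $\nu\ge\mu_R$ and both are pushed by the same map $\sigma_1^t$). Combining with step (2): set $\tilde\nu := \sigma_1^t\#\nu$; then $\sigma_1^t\#\rho \succ \tilde\nu$ and $\tilde\nu = \sigma_1^t\#\nu \ge \sigma_1^t\#\mu_R \succ \sigma_2^t\#\mu_R$. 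A final small step chains "$\ge$ then $\succ$" into a single "$\triangleright$": since $\sigma_1^t\#\mu_R \succ \sigma_2^t\#\mu_R$, there is a map $Q$ with $\abs{Q}\le\abs{\cdot}$ and $Q\#(\sigma_1^t\#\mu_R)=\sigma_2^t\#\mu_R$; but $\tilde\nu \ge \sigma_1^t\#\mu_R$, so pushing $\tilde\nu$ by $Q$ gives a probability measure $\succ$-below $\sigma_1^t\#\rho$ and $\ge \sigma_2^t\#\mu_R$, which is precisely $\sigma_1^t\#\rho \triangleright \sigma_2^t\#\mu_R$.

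The main obstacle I anticipate is the bookkeeping in step (2): verifying that the pointwise inequality $\abs{v_1}\ge\abs{v_2}$ holds along exactly the trajectories needed for $\abs{(\sigma_1^t\circ\Lambda_2^t)(x)}\le\abs{x}$, given that we only know the inequality on $B(0,2R)$ rather than globally. This rests on the focusing property: a point $x\in\mathrm{supp}(\sigma_2^t\#\mu_R)$ satisfies $x\in\overline{B(0,R)}$, its $v_2$-preimage $\Lambda_2^t(x)$ lies in $\overline{B(0,R)}$ as well, and the $v_1$-trajectory from $\Lambda_2^t(x)$, being focusing, stays in $\overline{B(0,R)}\subset B(0,2R)$ — so the comparison of radial speeds is valid throughout, and the standard Gronwall-type argument that radii under $v_1$ shrink at least as fast as under $v_2$ applies verbatim. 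The other steps are immediate consequences of the monotonicity of push-forward and the two preceding comparison lemmas, so no further subtlety is expected.
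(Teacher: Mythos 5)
Your steps (1)--(3) up to the last ``small step'' are sound: restricting to $\mu_R=\mu\chi_{B(0,R)}$, using the focusing property to justify the flow comparison $\sigma_1^t\#\mu_R\succ\sigma_2^t\#\mu_R$ on trajectories that never leave $\overline{B(0,R)}$, and obtaining $\sigma_1^t\#\rho\succ\sigma_1^t\#\nu\ge\sigma_1^t\#\mu_R$ are all correct. The gap is precisely the step you dismiss as small: converting the chain $\sigma_1^t\#\rho\succ\tilde\nu\ge\sigma_1^t\#\mu_R\succ\sigma_2^t\#\mu_R$ (which has the order ``$\succ$ then $\ge$ then $\succ$'') into the required ``$\succ$ then $\ge$''. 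First, your map $Q$ goes the wrong way: by Definition \ref{def:concentration}, $\sigma_1^t\#\mu_R\succ\sigma_2^t\#\mu_R$ provides a radius-decreasing map pushing $\sigma_2^t\#\mu_R$ \emph{onto} $\sigma_1^t\#\mu_R$ (in the proof of Lemma \ref{vel-comp} it is $\sigma_1^t\circ\Lambda_2^t$); a map with $|Q(x)|\le|x|$ satisfying $Q\#(\sigma_1^t\#\mu_R)=\sigma_2^t\#\mu_R$ would assert the reverse concentration and does not exist in general. Second, even with the correctly oriented $Q$, the measure $Q\#\tilde\nu$ is \emph{more} concentrated than $\tilde\nu$, so from $\sigma_1^t\#\rho\succ\tilde\nu$ you cannot deduce $\sigma_1^t\#\rho\succ Q\#\tilde\nu$ (both sit below $\tilde\nu$ in the $\succ$ order and are incomparable); nor does $Q\#\tilde\nu\ge\sigma_2^t\#\mu_R$ follow. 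A concrete failure: if $\mu_R$ is a uniform measure on a sphere, $\sigma_1^t\#\mu_R$ and $\sigma_2^t\#\mu_R$ are uniform measures on two different spheres, and neither dominates the other as a measure, so no candidate built by pushing $\sigma_1^t$-images around will dominate $\sigma_2^t\#\mu_R$ unless the domination is arranged \emph{before} the concentration comparison.

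The paper avoids this by never comparing $\sigma_1^t\#(\cdot)$ with $\sigma_2^t\#(\cdot)$ directly on the sub-probability piece. Instead it introduces the cutoff field $v_3=v_2\,\xi$ with $\xi\equiv1$ on $B(0,R)$ and $\xi\equiv0$ outside $B(0,2R)$, so that $|v_3|\le|v_1|$ holds \emph{globally}. Then all $\succ$ comparisons are performed on the probability measures $\rho$ and $\nu$: $\sigma_1^t\#\rho\succ\sigma_3^t\#\rho\succ\sigma_3^t\#\nu$, and only at the end does one use $\nu\ge\mu\ge\mu_R$ to get $\sigma_3^t\#\nu\ge\sigma_3^t\#\mu_R=\sigma_2^t\#\mu_R$ (the last equality because $v_3=v_2$ on $B(0,R)$ and $v_2$ is focusing). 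The intermediate probability measure witnessing $\triangleright$ is $\sigma_3^t\#\nu$, and the $\ge$ appears only after all the $\succ$ steps. If you want to keep your structure, the fix is to borrow this cutoff: replace your comparison of $\sigma_1^t\#\mu_R$ with $\sigma_2^t\#\mu_R$ by a comparison of $\sigma_1^t\#\nu$ with $\sigma_3^t\#\nu$, which is exactly where the global inequality $|v_3|\le|v_1|$ is needed.
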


\begin{proof}
Since $\rho \triangleright \mu$ there exists a probability measure $\nu$ such that 
$\rho \succ \nu \ge \mu$.
Let $\xi(x)$ be a smooth radially symmetric function which satisfies $\xi(x)=1$ if $\abs{x} \le R$, $ \xi(x)=0$ if $\abs{x} \ge 2R$ and $\chi(x) \le 1$ for all $x \in \real^d$. The velocity field $v_3(x,t):=v_2(x,t) \xi(x)$ is still in $\vel$. Moreover we have $\abs{v_3} \le \abs{v_1}$ for all $x \in \real^d$ and $t \ge 0$. We can therefore use the two previous Lemmas to obtain that
$$
\sigma_1^t \# \rho \succ \sigma_3^t\# \rho \succ  \sigma_3^t \#\nu \ge \sigma_3^t\# \mu \ge
\sigma^t_3  \# (\mu \, \chi_{B[0,R]})
$$
The last two inequalities are a simple consequence of the definition of the push-forward together with the fact that $\nu \ge \mu \ge \mu \, \chi_{B[0,R]}$. Finally, note that since $v_3=v_2$ on $B(0,R) \times [0,+\infty)$, then $\sigma^t_3  \# (\mu \, \chi_{B[0,R]})=\sigma^t_2  \# (\mu \, \chi_{B[0,R]})$.
\end{proof}

\subsection{Proof of Theorem \ref{main-theorem} by bootstrap argument}

\

Fix $\alpha \in (2-d, 2)$ and define the functions
\begin{align}
&f_{\epsilon,r_0}(x) = \frac{1}{\abs{x}^{d+\alpha -2 +\epsilon}}  \;\;  \chi_{B(0,r_0)}(x) \qquad  \text{for }\epsilon \in (0,1)\\
&g_{r_0}(x) = \frac{1}{\abs{x}^{d + \alpha -2}}  \;\;   \chi_{B(0,r_0)}(x)  \\
&h_{\beta, r_0}(x)= \frac{1}{\abs{x}^{d + \alpha -2}}
 \frac{1}{(- \ln\abs{x})^{\beta}}  \;\; 
  \chi_{B(0,r_0)}(x)  \qquad \text{for }\beta \in (\frac{d+\alpha -2}{d},1).
\end{align}
Note that  at the origin $f_{\epsilon,r_0}$ is more singular than $g_{r_0}$ which itself is more singular than
  $h_{\beta, r_0}$.
  In \cite{BLR} it was proved
 that  if $\alpha =1$ and the initial data is exactly
 equal to $Cf_{\epsilon,r_0}(x)$ ($C$ is a normalizing constant) then a
Dirac delta function  appears instantaneously in the
 solution. The proof relied on the fact that solutions of
 the ODE $\dot{x}=-(\nabla K * f_{\epsilon,r_0})(x)$ reach
 the origin in finite time. However this strategy does not
 work with $g_{r_0}$ and $f_{\epsilon,r_0}$,
because  solutions of  $\dot{x}=-(\nabla K * g_{r_0})(x)$
 and $\dot{x}=-(\nabla K * h_{\beta,r_0})(x)$ do not reach
 the origin in finite time.
For that reason  we will use a bootstrap argument
 to prove that a delta function appears instantaneously
 when the  initial data is equal to
or  more singular than
 $h_{\beta,r_0} \in L^{\frac{d}{d + \alpha -2}}(\real^d)$.
 Roughly speaking, we
 will show that the velocity field $-\nabla K * h_{\beta,r_0}$
 instantaneously deforms $h_{\beta,r_0}$ into a
 function more singular than $g_{r_0}$, then we will
 show that the velocity field $-\nabla K * g_{r_0}$
 instantaneously  deforms $g_{r_0}$ into a function
 more singular than $f_{\epsilon,r_0}$, and finally
 we will use  the argument from \cite{BLR} to show that the
 velocity field $-\nabla K * f_{\epsilon,r_0}$
 deforms $f_{\epsilon,r_0}$ in such a way that a
 delta function  appears instantly.

The following definition is consistent
 with Definition \ref{definition:radial}:
\begin{definition}
Given a radially symmetric, non-negative
 function $u\in L^1(\real^d)$, we define $\hat{u} \in L^1((0,+\infty))$ to
 be the unique function satisfying
 $$ \int_{r_1}^{r_2} \hat{u}(r) dr=\int_{r_1<\abs{x}<r_2} u(x) dx \qquad \text{for all } r_1,r_2 \ge 0. $$
 In other words, $
\hat{u}(r)=u(r) \omega_d r^{d-1}.
$
 \end{definition}

With this notation we have:
 \begin{align} \label{vr}
&\hat{f}_{\epsilon,r_0}(r)=  \omega_d\;\;   \frac{1}{r^{\alpha -1 + \epsilon}}\;\;  \chi_{[0,r_0]}(r) \\ \label{vrr}
&\hat{g}_{r_0}(r) = \omega_d \;\; \frac{1}{r^{\alpha -1}} \chi_{[0,r_0]}(r) \\
&\hat{h}_{\beta,r_0}(r)= \omega_d \;\;
 \frac{1}{r^{\alpha -1}} \frac{1}{(- \ln r)^{\beta}}  \;\;  \chi_{[0,r_0]}(r)\label{vrrr}.
\end{align}
 We remind the reader that by Lagrangian solution we mean a function $\rho(x,t)$ that satisfies the Lagrangian formulation \eqref{sol1aa}-\eqref{sol2aa} of the aggregation equation. 

\begin{proposition} \label{vel-est} Let $\rho \in C([0,+\infty), \prob_{RD}(\real^d))$ be a  Lagrangian
  solution of the aggregation
 equation with compactly supported initial data $\rho_{init}$ and
 potential $K$ satisfying  $\nabla K(x)=x \abs{x}^{\alpha-2}$, $2-d < \alpha <2$. Let $v(x,t)= -(\nabla K * \rho(t))(x)$ be the associated velocity field.
\begin{enumerate}
 \item[(i)] If $\rho_{init} \triangleright  c f_{\epsilon,r_0} $ for some $c,r_0>0$ and $\epsilon \in (0,1)$,  then there exist $R,C>0$ such that
  $$
\abs{v(x,t)} \ge C \abs{x}^{1-\epsilon} \qquad \text{for all } (x,t)\in B(0,R)\times [0+\infty).
$$
\item[(ii)] If $\rho_{init} \triangleright  c\; g_{r_0} $ for some $c,r_0>0$,  then there exist $R,C>0$ such that
$$
\abs{v(x,t)} \ge C \; \abs{x} (-\ln\abs{x} ) \qquad \text{for all } (x,t)\in B(0,R)\times [0+\infty).
$$
\item[(iii)] If $\rho_{init} \triangleright  c h_{\beta,r_0}$  for
 some $c,r_0>0$ and $\beta \in (\frac{d + \alpha -2}{d},1)$, then there exist $R,C>0$ such that
$$
\abs{v(x,t)} \ge C \; \abs{x} (-\ln\abs{x} )^{1-\beta} \qquad \text{for all } (x,t)\in B(0,R)\times [0+\infty).
$$
\end{enumerate}
\end{proposition}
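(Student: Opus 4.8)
The plan is to make the estimate entirely time‑independent. The key observation is that the velocity field of any Lagrangian solution points strictly inward: by formula \eqref{formula} together with the strict positivity of $\phi$ (Lemma \ref{dong}), $v(x,t)=-\lambda(\abs{x},t)\,x$ with $\lambda\ge0$, so $v\in\vel$ and its flow $\sigma^t$ is focusing, $\frac{d}{dt}\abs{\sigma^t(x)}^2\le0$, whence $\abs{\sigma^t(x)}\le\abs{x}$. By radial symmetry this says $\widehat{\rho(t)}=T_t\#\widehat{\rho_{init}}$ for the map $T_t(r)=\abs{\sigma^t(x)}$, $\abs{x}=r$, which satisfies $T_t(r)\le r$; that is, $\rho(t)\succ\rho_{init}$ for every $t\ge0$. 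Now unwind the hypothesis: $\rho_{init}\triangleright c\,\mu$ (where $\mu$ is $f_{\epsilon,r_0}$, $g_{r_0}$ or $h_{\beta,r_0}$) means there is $\nu\in\prob_R(\real^d)$ with $\rho_{init}\succ\nu$ and $\nu\ge c\,\mu$; since $\succ$ is transitive we get $\rho(t)\succ\nu\ge c\,\mu$, i.e. $\rho(t)\triangleright c\,\mu$ for all $t\ge0$. Applying the comparison principle \eqref{comp3} then yields, uniformly in $t$,
$$\abs{v(x,t)}=\abs{(\nabla K*\rho(t))(x)}\ \ge\ c\,\abs{(\nabla K*\mu)(x)},$$
so everything reduces to a lower bound on $\abs{\nabla K*\mu}$ for the three explicit profiles.

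For that I would use \eqref{formula} directly. Each profile has radial density of the form $\hat\mu(r)=\omega_d\,r^{-(\alpha-1)}\psi(r)\,\chi_{[0,r_0]}(r)$ with $\psi\equiv1$ for $g_{r_0}$, $\psi(r)=r^{-\epsilon}$ for $f_{\epsilon,r_0}$ and $\psi(r)=(-\ln r)^{-\beta}$ for $h_{\beta,r_0}$, so the change of variables $r=\abs{x}u$ in \eqref{formula} gives the uniform expression $\abs{(\nabla K*\mu)(x)}=\omega_d\,\abs{x}\int_0^{r_0/\abs{x}}\phi(u)\,u^{-(\alpha-1)}\,\psi(\abs{x}u)\,du$. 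In case (i) this is $\omega_d\abs{x}^{1-\epsilon}\int_0^{r_0/\abs{x}}\phi(u)\,u^{-(\alpha-1+\epsilon)}\,du$; the hypothesis forces $f_{\epsilon,r_0}\in L^1(\real^d)$, hence $\alpha-1+\epsilon<1$, and since $\phi$ is non‑increasing with $\phi(1)>0$ we have $\int_0^{r_0/\abs{x}}\ge\int_0^1\ge\phi(1)/(2-\alpha-\epsilon)$ as soon as $\abs{x}<r_0$, which is (i) with $R=r_0$. In cases (ii) and (iii) the decisive contribution comes from the tail: by Lemma \ref{dong}, $\phi(u)\,u^{2-\alpha}\to(d+\alpha-2)/d>0$, so $\phi(u)\,u^{-(\alpha-1)}\ge c_0\,u^{-1}$ for $u\ge u_0$; then $\int_{u_0}^{r_0/\abs{x}}c_0\,u^{-1}\,du=c_0\big(-\ln\abs{x}+\ln(r_0/u_0)\big)\ge\tfrac{c_0}{2}(-\ln\abs{x})$ for $\abs{x}$ small, which is (ii), while for (iii), after replacing $r_0$ by any smaller value $<1$ (which only weakens the hypothesis and can be absorbed into $c$, and guarantees $-\ln(\abs{x}u)>0$ throughout the integration range), the substitution $s=-\ln\abs{x}-\ln u$ turns $\int_{u_0}^{r_0/\abs{x}}c_0\,u^{-1}\,(-\ln\abs{x}-\ln u)^{-\beta}\,du$ into
$$c_0\int_{-\ln r_0}^{\,-\ln\abs{x}-\ln u_0}s^{-\beta}\,ds=\frac{c_0}{1-\beta}\Big[(-\ln\abs{x}-\ln u_0)^{1-\beta}-(-\ln r_0)^{1-\beta}\Big]\ \ge\ C\,(-\ln\abs{x})^{1-\beta}$$
for $\abs{x}$ small. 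Multiplying through by the constant $c$ of the hypothesis gives all three claims, with $R$ chosen small enough for the asymptotic inequalities to hold.

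The hard part is purely technical and lies in case (iii): one must track the two logarithmic scales $-\ln\abs{x}$ and $\ln u$ simultaneously, which is precisely why one shrinks $r_0$ below $1$ (so that the inner logarithm stays positive over the entire range $u\in[u_0,r_0/\abs{x}]$), and one must check that the subtracted boundary term $(-\ln r_0)^{1-\beta}$ is negligible against $(-\ln\abs{x})^{1-\beta}$ as $\abs{x}\to0$; cases (i) and (ii) are then just the power‑law and constant specializations of the same integral. Everything else — transitivity of $\succ$, the focusing of $\sigma^t$, the change of variables — is routine.

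Finally I would record, for the application in Theorem \ref{main-theorem}, that the pointwise bound $u_{init}\ge c\,h_{\beta,r_0}$ assumed there gives $\rho_{init}\ge c\,h_{\beta,r_0}\,dx$ as measures and hence, taking $\nu=\rho_{init}$, $\rho_{init}\triangleright c\,h_{\beta,r_0}$; thus part (iii) applies and supplies the first velocity estimate of the bootstrap, the later ones being fed in turn by parts (ii) and (i) via Lemma \ref{comp-lemma}.
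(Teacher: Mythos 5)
Your proof is correct and follows essentially the same route as the paper's: first reduce to a time-independent estimate using the inward-pointing velocity field together with the comparison principles \eqref{comp2}--\eqref{comp3}, then lower-bound $\abs{\nabla K*\mu}$ for the three explicit profiles via formula \eqref{formula} and the asymptotics of $\phi$ from Lemma \ref{dong}. The differences are cosmetic (you propagate the relation $\triangleright$ forward in time rather than comparing $\abs{v(x,t)}$ with $\abs{v(x,0)}$, and you integrate in the rescaled variable $u=r/\abs{x}$ instead of $r$), and your explicit verification of the integrability condition $2-\alpha-\epsilon>0$ in case (i) is, if anything, slightly more careful than the paper's computation.
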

\begin{proof}
Let us prove (i). On one hand, from \eqref{comp3}  we see
 that $\abs{v(x,0)} \ge c \abs{(\nabla K*  f_{\epsilon,r_0})(x)}$
 for all $x\in \real^d$. On the other hand, since the velocity field is always pointing inward (this is due to the positivity of $\phi$), we have that  $\rho(t) \succ \rho(0)$ for all $t \ge 0$, and therefore from
 \eqref{comp2}  we get 
 that $\abs{v(x,t)} \ge \abs{v(x,0)}$
for all $x\in \real^d$ and $t\ge 0$. So
 we only  need to show that
 $\abs{(\nabla K*  f_{\epsilon,r_0})(x)} \ge C \abs{x}^{1-\epsilon}$
 in some neighborhood of the origin, and this
  estimate follows easily from 
Lemma \ref{dong}.  Indeed, by Lemma \ref{dong} we have for $|x|\le r_0$
\begin{align}
\abs{\nabla K * f_{\epsilon, r_0} (x)} &=  \omega_d 
\int_0^{|x|} \phi\p{\frac{r}{\abs{x}}}  \p{\frac{\abs{x}}{r}}^{\alpha-1} r^{- \epsilon} dr \\ & \qquad \qquad \qquad+  \omega_d  |x| \int_{|x|}^{r_0} 
 \phi\p{\frac{r}{\abs{x}}}  \p{\frac{r}{\abs{x}}}^{2-\alpha} r^{-1-\epsilon} dr \\
&\geq\omega_d  C_1  
\int_0^{|x|}r^{- \epsilon} dr + \omega_d  C_2  |x| \int_{|x|}^{r_0} 
r^{-1-\epsilon} dr \label{vivi} \\
& \ge \omega_d  C_1 \frac{|x|^{1-\epsilon}}{1-\epsilon}
\end{align}
where $C_1= \inf _{[0,1]}\phi=\phi(1)$ and $C_2= \inf_{(1,+\infty)} \phi(r)r^{2-\alpha}>0$.

 Let us now prove (ii). Reasoning as
 above we see that  it is enough  to
 show that $\abs{(\nabla K*  g_{r_0})(x)} \ge C \; \abs{x} (-\ln\abs{x} ) $
 in some neighborhood of the origin.
 Then the argument is similar.  From \eqref{vivi} with $\epsilon=0$ we get
 $$\abs{\nabla K * g_{r_0} (x)} \ge 
 \omega_d  C_2 \abs{x} \ln\p{\frac{r_0}{\abs{x}}}
 $$
 which yields to the desired estimate.

 To prove (iii) it is enough to
 show $\abs{(\nabla K*  h_{\beta ,r_0})(x)}
 \ge C \; \abs{x} (-\ln\abs{x} )^{1-\beta}$  in some neighborhood
 of the origin, and the argument is similar.  In this case we have
$$|\nabla K  * u_0(x)| \ge  \omega_d  C_2|x| \int_{|x|}^{r_0} \frac{1}{|\log r|^{\beta}}
{\frac{dr}{r}},$$
\noindent which yields to the desired estimate.
 This last  estimate was derived independently  in \cite{HJD}.
\end{proof}
           
The ODE's $$\dot{r}=-C r^{1-\epsilon}, \quad \dot{r}=-C r (-\ln r ) \quad
 \text{ and } \quad \dot{r}=-C r (-\ln r )^{1-\beta}$$ suggested by the previous  proposition have explicit solutions and their flow maps  are respectively:
\begin{align}
\label{a}
&\sigma_1^t(r) = {\sigma_1}(r,t)=
\begin{cases}
(r^\epsilon-\epsilon Ct)^{1/\epsilon}& \text{ if } r>(\epsilon Ct)^{1/\epsilon}\\
0 &\text{ if } r\le (\epsilon Ct)^{1/\epsilon}
\end{cases} \\
\label{b}
&\sigma_2^t(r) = {\sigma_2}(r,t)=r^{e^{Ct}} 
\\
\label{c}
&\sigma_3^t(r) = {\sigma_3}(r,t)=e^{-\p{C\beta t +
 \p{\ln{\frac{1}{r}}}^\beta}^{1/\beta}}  
\end{align}
Solutions of the first ODE reach the origin in finite time but solutions of the other two ODE's only approach  the origin as $t \to \infty$.
 Corresponding to the flow maps  $\sigma_i: [0,+\infty) \times [0,+\infty) \to [0,+\infty)$ there are  flow maps $S_i: \real^d \times [0,+\infty) \to \real^d$ defined by $S_i(x,t)=\sigma_i(\abs{x},t)\frac{x}{\abs{x}}$. 
 The $S_i$ are the flow maps associated to the
 velocity fields $w_1(x)=-C \abs{x}^{1-\epsilon} \frac{x}{\abs{x}}$, $w_2(x)=-C \abs{x} (-\ln\abs{x} ) \frac{x}{\abs{x}}$,
 and $w_3(x)=-C \abs{x} (-\ln\abs{x} )^{1-\beta}\frac{x}{\abs{x}}$. Let  $u\in L^1(\real^d)$ be a radially symmetric, non-negative function. It is clear from \eqref{a} that $S^t_1 \# u$ has a point mass at the origin if $u$ has non-zero mass in $B(0,(\epsilon Ct)^{1/\epsilon})$. On the other hand, because $S^t_2$ and  $S^t_3$ are smooth invertible maps, $S^t_2 \# u$ and $S^t_3 \# u$
are continuous with respect to the Lebesgue measure, and by the change of variable formula, we have
\begin{gather} \label{push-formula}
(S_i^t \# u)\sphat \;(r) =  
(\sigma_i^t \# \hat{u})(r)  =  \hat{u}(\tau_i^t(r)) \;\;  \der{\tau_i^t (r)}{r}  \qquad i=2,3 \\
\text{ where }\tau_i^t(r)= (\sigma_i^t)^{-1}(r)
\end{gather}

\begin{proposition}[Bootstrap] 
Let $\rho \in C([0,+\infty), \prob_{RD}(\real^d))$ be a Lagrangian
  solution of the aggregation
 equation with compactly supported initial data $\rho_{init}$ and
 potential $K$ satisfying $\nabla K(x)=x \abs{x}^{\alpha-2}$, $2-d < \alpha <2$. 
\begin{enumerate}
\item[(i)] If $\rho_{init} \triangleright   c f_{\epsilon, r_0} $  for some $c,r_0>0$ and $\epsilon \in (0,1)$,  then $\rho(t)(\{0\})>0$ for all $t >0$.
\item[(ii)] If $\rho_{init} \triangleright  c\; g_{r_0} $  for some $c,r_0>0$, then for any $t>0$ there exist constants $c_1,r_1>0$ and $\epsilon \in (0,1)$, such that $\rho(t) \triangleright  c_1\, f_{\epsilon,r_1} $. 
\item[(iii)] If $\rho_{init} \triangleright c \;h_{\beta,r_0}$, for
 some $c,r_0>0$ and $\beta \in (\frac{d +\alpha -2}{d},1)$, then
 for any $t>0$ there exists 
 constants $c_1,r_1>0$ such that $\rho(t) \triangleright   c_1\; g_{r_1} $. 
\end{enumerate}
\end{proposition}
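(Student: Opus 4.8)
The plan is to prove the three assertions in turn, each by the same two-step argument: first invoke the corresponding lower bound on the true velocity field $v=-(\nabla K*\rho(t))$ from Proposition \ref{vel-est}, and then transport the concentration through the explicit model flows $S_1,S_2,S_3$ of \eqref{a}--\eqref{c} by means of the comparison Lemma \ref{comp-lemma}. Concretely, in case (i) (resp.\ (ii), (iii)) Proposition \ref{vel-est} gives $R,C>0$ with $|v(x,t)|\ge|w_1(x)|$ (resp.\ $|w_2(x)|$, $|w_3(x)|$) on $B(0,2R)\times[0,+\infty)$, where $w_1,w_2,w_3$ are the radial fields listed after that proposition. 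Each $w_i$ belongs to $\vel$ — it is radial, points inward on $B(0,1)$ because $-\ln|x|>0$ there, and satisfies (P0)--(P3) — and its flow is exactly $S_i$. Hence Lemma \ref{comp-lemma} applied with $v_1=v$, $v_2=w_i$, $\rho=\rho_{init}$ and $\mu$ the profile in question, together with the observation that $\mu\,\chi_{B(0,R)}$ is again that same profile with radius $r'=\min(r_0,R)$, gives $\rho(t)=\sigma^t\#\rho_{init}\ \triangleright\ S_i^t\#(c\,\mu_{r'})$ for all $t\ge0$, where $\mu_{r'}$ is $f_{\epsilon,r'}$, $g_{r'}$, or $h_{\beta,r'}$ in cases (i), (ii), (iii). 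Everything is thus reduced to understanding these one-variable push-forwards.

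Case (i) is then immediate: $c\,f_{\epsilon,r'}$ is radial, nonnegative, and has strictly positive mass in every ball about the origin, in particular in $B(0,(\epsilon C t)^{1/\epsilon})$, and by \eqref{a} the map $S_1^t$ collapses all of that mass to $0$, so $S_1^t\#(c\,f_{\epsilon,r'})$ carries a positive Dirac mass at the origin for every $t>0$. Since $\rho(t)\triangleright S_1^t\#(c\,f_{\epsilon,r'})$, the definition of $\triangleright$ yields $\nu\in\prob_R(\real^d)$ with $\rho(t)\succ\nu\ge S_1^t\#(c\,f_{\epsilon,r'})$, so $\nu(\{0\})>0$; and the relation $\rho(t)\succ\nu$ cannot decrease the mass at the origin, because the transport map $T$ of Definition \ref{def:concentration} satisfies $T(0)=0$. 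Hence $\rho(t)(\{0\})\ge\nu(\{0\})>0$.

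For (ii) and (iii) I would compute the radial push-forwards using \eqref{push-formula}. In (ii), $(\sigma_2^t)^{-1}(r)=r^{e^{-Ct}}$, so a short computation with \eqref{vrr} gives $(S_2^t\#g_{r'})\sphat(r)=\mathrm{const}\cdot r^{\,e^{-Ct}(2-\alpha)-1}$ for $0<r<(r')^{e^{Ct}}$; this exponent is $\le-(\alpha-1+\epsilon)$, the exponent of $\hat f_{\epsilon,r_1}$ in \eqref{vr}, exactly when $\epsilon\le(2-\alpha)(1-e^{-Ct})$, which is positive for $t>0$, and since $r<1$ the inequality of exponents forces $(S_2^t\#g_{r'})\sphat\ge c_1\hat f_{\epsilon,r_1}$ on $[0,r_1]$ for suitable $c_1>0$, $r_1>0$. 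In (iii), writing $L=-\ln r$ and $M=(L^\beta-C\beta t)^{1/\beta}$, one gets $(\sigma_3^t)^{-1}(r)=e^{-M}$, and after differentiating,
\[ (S_3^t\#h_{\beta,r'})\sphat(r)=\mathrm{const}\cdot r^{-1}\,e^{(\alpha-2)M}\,L^{\beta-1}\,M^{1-2\beta} \]
for $r$ small enough that $L^\beta>C\beta t$; since $M=L-CtL^{1-\beta}+O(L^{1-2\beta})$ this equals $\mathrm{const}\cdot r^{-(\alpha-1)}\,e^{(2-\alpha)CtL^{1-\beta}}\,L^{\beta-1}M^{1-2\beta}(1+o(1))$, and because $0<\beta<1$ and $t>0$ the exponential factor $e^{(2-\alpha)CtL^{1-\beta}}$ tends to $+\infty$ as $r\to0$ and overwhelms the powers of $L$, so $(S_3^t\#h_{\beta,r'})\sphat(r)\ge c_1\hat g_{r_1}(r)$ for $r<r_1$. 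In either case the pointwise bound on radial densities lifts to an inequality $S_i^t\#(c\,\mu_{r'})\ge c_1\,\nu_{r_1}$ between measures on $\real^d$, with $\nu_{r_1}=f_{\epsilon,r_1}$ in case (ii) and $\nu_{r_1}=g_{r_1}$ in case (iii), and since $\triangleright$ is preserved when the dominated measure is made smaller, this gives $\rho(t)\triangleright c_1 f_{\epsilon,r_1}$ in case (ii) and $\rho(t)\triangleright c_1 g_{r_1}$ in case (iii).

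Theorem \ref{main-theorem} then follows by chaining the three: the hypothesis gives $u_{init}\ge c\,h_{\beta,r_0}$ pointwise, hence $\rho_{init}\triangleright c\,h_{\beta,r_0}$ (take $\nu=\rho_{init}$ in the definition of $\triangleright$), and for $t>0$ one applies (iii) over $[0,t/3]$, then — using that the equation is autonomous, so $s\mapsto\rho(t/3+s)$ is again a Lagrangian solution with compactly supported initial datum $\rho(t/3)$ — (ii) over $[0,t/3]$, then (i) over $[0,t/3]$, to conclude $\rho(t)(\{0\})>0$. The main obstacle is the asymptotic analysis in case (iii): one must verify that pushing the critical profile $h_\beta$ forward by the super-linear but non-collapsing flow $S_3^t$ produces precisely the growth factor $e^{c't(-\ln r)^{1-\beta}}$ and that it genuinely dominates the logarithmic Jacobian corrections $L^{\beta-1}M^{1-2\beta}$, with the error in $M=(L^\beta-C\beta t)^{1/\beta}$ controlled uniformly as $r\to0$; cases (i) and (ii), by contrast, are essentially bookkeeping with $\succ$, $\triangleright$, and the explicit formulas \eqref{a}--\eqref{b}.
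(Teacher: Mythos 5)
Your proposal follows the paper's own argument almost verbatim: Proposition \ref{vel-est} gives the lower bound on $|v|$, Lemma \ref{comp-lemma} transfers it to the model flows $S_i$ (with the profile truncated to $B(0,R)$, which leaves it in the same family with a smaller radius), and \eqref{push-formula} reduces (ii) and (iii) to the same one-variable exponent and asymptotic computations that appear in the paper — your case (ii) exponent $e^{-Ct}(2-\alpha)-1$ and your case (iii) identity $r^{\alpha-1}(\sigma_3^t\#\hat h)(r)=(\tau/r)^{2-\alpha}M^{1-2\beta}L^{\beta-1}$ both match the paper exactly after rearrangement. Two small remarks: in case (i) you are actually a touch more careful than the paper, explicitly observing that $\rho(t)\succ\nu$ preserves the mass at the origin because any transport map in Definition \ref{def:concentration} fixes $0$; in case (iii) the multiplicative ``$(1+o(1))$'' is strictly correct only when $\beta>1/2$ (for $\beta\le1/2$ the correction $O(L^{1-2\beta})$ in $L-M$ is not $o(1)$ in the exponent), but this does not affect the conclusion since all one needs is that $L-M\sim CtL^{1-\beta}\to+\infty$ and that $M^{1-2\beta}L^{\beta-1}$ is only polynomial in $L$; the paper sidesteps this by proving one-sided bounds $(\tau/r)^{2-\alpha}\ge e^{\frac12(2-\alpha)ctL^{1-\beta}}$ for $r$ small, which is cleaner.
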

\begin{proof}  Let us prove (i).  Let $S_1^t(x)=S_1(x,t)$ be the flow map generated by the velocity field $w_1(x)=-C \abs{x}^{1-\epsilon} \frac{x}{\abs{x}}$ suggested by  Proposition  \ref{vel-est}.  From Lemma \ref{comp-lemma} and Proposition  \ref{vel-est} we then obtain  that
\begin{equation} \label{ugh}
\rho(t) \triangleright S_1^t \# c f_{\epsilon, r_1} \end{equation}
 for $r_1$ small enough and for all $t \ge 0$. 
 Let us fix a $t>0$. Since $f_{\epsilon, r_1}$ has non-zero mass in  $B(0,(\epsilon Ct)^{1/\epsilon})$, it is clear  from  \eqref{a} that 
  the measure $S_1^t \# c f_{\epsilon, r_1}$ has a point mass at the origin. Then by \eqref{ugh}  we conclude that $\rho(t)$ also has a point mass at the origin. 

Let us now prove (ii). 
 Again Lemma \ref{comp-lemma} and Proposition \ref{vel-est} imply that 
$\rho(t) \triangleright S_2^t \# c g_{ r_1}$  for $r_1$ small enough. As already mentioned $S_2^t \# c g_{ r_1}$ is continuous with respect to Lebesgue measure. We are going to show that given any $t>0$,  $S_2^t \#  g_{ r_1} \ge  c_2 f_{r_2,\epsilon}$ for some constant $c_2,r_2>0$  and $\epsilon \in (0,1)$ which will conclude the proof of (ii).
Let 
$
\tau_2^t(r)= (\sigma_2^t)^{-1}(r)=r^{e^{-ct}}
$
where $\sigma_2^t(r)$ is defined by \eqref{b}. Using the change of variable formula, we get that
\begin{align*}
(\sigma_2 \# \hat{g}_{r_1})(r)&=\hat{g}_{r_1}(\tau_2^t(r)) \;\; \der{\tau_2^t}{r}(r)\\
&= \frac{\omega_d}{\p{r^{e^{-ct}}}^{\alpha-1}} e^{-ct} r^{e^{-ct}-1} \quad \text{for $r$ small enough}\\
&= \frac{\omega_d}{r^{\alpha-1 + (2-\alpha)(1-e^{-ct})}}  e^{-ct}
\end{align*}
Since $2-\alpha>0$ it is clear that $(\sigma_2 \# \hat{g}_{r_1})(r) \ge  c_2 \hat{f}_{r_2,\epsilon}(r)$ for $r$ small enough.

Let us now prove (iii).  Once more Lemma \ref{comp-lemma} and Proposition \ref{vel-est} imply that 
$\rho(t) \;  \triangleright \;  S_3^t \# c h_{\beta, r_1}$  for $r_1$ small
 enough. Let us fix $t>0$ and  show
 that $S_3^t \# c h_{\beta, r_1} \ge c_2 g_{r_2}$ for
 $r_2$ small enough. In view of \eqref{vrr} it is enough
 to prove that 
 \begin{equation} \label{vruum}
 \lim_{r \to 0} r^{\alpha-1} \p{\sigma_3^t \#  \hat{h}_{\beta, r_1}(r)}>0.
\end{equation}
Let $\tau_3^t(r)=(\sigma_3^t)^{-1}(r)$ and note that
\begin{equation} \label{vroum}
\ln \frac{1}{\tau_3^t(r)}= \p{-c \beta t+\p{\ln \frac{1}{r}}^\beta }^{1/\beta}
\end{equation}
From now on we drop the lower subscript.  From the change of variable formula we have
\begin{align}
\sigma^t \# \hat{h}(r)&= \hat{h}(\tau^t(r)) \; \der{\tau^t}{r}(r)\\
&= \frac{\tau^t(r)^{1-\alpha}}{\p{\ln \frac{1}{\tau^t(r)}}^\beta}  \der{\tau^t}{r}(r) \label{ff1}\\
&= \frac{\tau^t(r)^{2-\alpha}}{-c \beta t+\p{\ln \frac{1}{r}}^\beta} \frac{ \der{\tau^t}{r}(r)}{\tau^t(r)} \label{ff2}
\end{align}
where we have used \eqref{vroum} to go from \eqref{ff1} to \eqref{ff2}. Then note that using \eqref{vroum} again we get
$$
 \frac{ \der{\tau^t}{r}(r)}{\tau^t(r)}= 
 - \der{}{r} \ln\p{\frac{1}{\tau^t(r)}}=
 \p{-c \beta t + \p{\ln \frac{1}{r}}^\beta}^{\frac{1}{\beta}-1} \p{\ln \frac{1}{r}}^{\beta-1} \frac{1}{r}
$$
which combined with \eqref{ff2} gives
\begin{align}
r^{\alpha-1} \p{\sigma^t\# \hat{h}(r)}&= \p{\frac{\tau^t(r)}{r}}^{2-\alpha}  \p{-c \beta t + \p{\ln \frac{1}{r}}^\beta}^{\frac{1}{\beta}-2} \p{\ln \frac{1}{r}}^{\beta-1}\\
&= \p{\frac{\tau^t(r)}{r}}^{2-\alpha}   \p{1- \frac{c \beta t}{ \p{\ln \frac{1}{r}}^\beta}}^{\frac{1}{\beta}-2} \p{\ln \frac{1}{r}}^{-\beta}\\
& \ge \frac{1}{2}\p{\frac{\tau^t(r)}{r}}^{2-\alpha}   \p{\ln \frac{1}{r}}^{-\beta}  \quad \text{for $r$ small enough} \label{vrim}
\end{align}
Using \eqref{vroum} and doing a Taylor expansion we find that
\begin{align}
\ln \p{\frac{\tau^t(r)}{r}}&= 
\ln \p{\frac{1}{r}} - \ln \p{\frac{1}{r}} \p{1- \frac{c \beta t}{ \p{\ln \frac{1}{r}}^\beta}}\\
&= ct \p{\ln \frac{1}{r}}^{1-\beta} \p{ 1 + o\p{\frac{c \beta t}{ \p{\ln \frac{1}{r}}^\beta}}} \\
& \ge \frac{1}{2}ct \p{\ln \frac{1}{r}}^{1-\beta}  \quad \text{for $r$ small enough} \label{vram}
\end{align}
Combining \eqref{vrim} and \eqref{vram} we get 
$$
\ln\p{r^{\alpha-1} \p{\sigma^t \# \hat{h}(r)}} \ge 
\ln (1/2) + \frac{1}{2}(2-\alpha)ct \p{\ln \frac{1}{r}}^{1-\beta} - \beta \ln \ln \frac{1}{r} 
$$
for $r$ small enough. Since $2-\alpha>0$ it is clear that $\lim_{r \to \infty}\ln\p{r^{\alpha-1} \p{\sigma^t \# \hat{h}(r)}}=+\infty$ which implies \eqref{vruum}. 
\end{proof}

We now prove Theorem \ref{main-theorem}:

{ \it Proof of Theorem \ref{main-theorem}.}
If $\rho_{init} \triangleright c \;h_{\beta,r_0}$ for some
 for some $c,r_0>0$ and $\beta  \in (\frac{d + \alpha -2}{d},1)$, we can apply the previous proposition to get that   for any $t_1>0$, $\rho(t_1) \triangleright   c\; g_{r_0}$ for some different constants $c,r_0>0$. Applying the proposition again, we get  that   for any $t_2>t_1$, $\rho(t_2) \triangleright   c\; f_{\epsilon,r_0}$ for some other constants $c,r_0>0$ and for some $\epsilon \in (0,1)$. Applying the proposition one last time we get that   for any $t_3>t_2$,  $\rho(t_3)(\{0\})>0$. Since $t_1<t_2< t_3$ can be chosen arbitrarily small, this conclude the proof.
\qed


\section{Newtonian potential case}
\label{Newtonian}

\

An even more singular case is that of the Newtonian potential, $\alpha = 2-d$ in general, with $K(x) = \log|x|$ in the special case of 2D.
Without loss of generality we use the normalization for $K$ that yields $\Delta (K*\rho)= \rho$, i.e. the fundamental solution of the Poisson equation.  This simple fact localizes the dynamics as compared to the nonlocal case studied in previous sections.  In Eulerian coordinates, for smooth densities, we have 
\begin{equation}\rho_t + v\cdot \nabla \rho = \rho^2. \label{rhoeq}
\end{equation}

Recall  that for radially symmetric problems, the Laplace operator is
$$\Delta f = \frac{1}{r^{d-1}} \frac{\partial}{\partial r}\bigl( r^{d-1} \frac{\partial f}{\partial r}\bigr).$$
Likewise we have the following formulae for the gradient and divergence operators:

$$\nabla f = \frac{\partial f}{\partial r} \vec r,$$ where $\vec r$ is the unit
outward pointing radial vector and 
$$div\  v =  \frac{1}{r^{d-1}} \frac{\partial}{\partial r} r^{d-1} v.$$

Using the latter formula we can rewrite $v$ above in terms of $\rho$ simply by inverting
$div v = -\rho$:
\begin{equation}v(r) = -\frac{1}{r^{d-1}} \int_0^r s^{d-1} \rho(s) ds : = - \frac{m(r)}{r^{d-1}},\label{radialomega}
\end{equation}
where $m(r)$ is proportional to the mass contained inside a ball of radius $r$.
Thus it makes sense to rewrite the evolution equation (\ref{rhoeq}) in mass coordinates - 
in general regardless of the kernel it is
\begin{equation}
m_t + vm_r = 0.
\label{mass}
\end{equation}
 However this greatly simplifies in the special Newtonian case.  Formula (\ref{radialomega}) gives
$$m_t - \frac{m m_r}{r^{d-1}} = 0. $$   By changing variables to $z$ coordinates, where $z=\frac{r^d}{d}$, we have the inviscid Burgers equation,
\begin{equation}
m_t -m m_z=0.\label{burger}
\end{equation}

The transformation to equation (\ref{mass}) is well-known; the transformation to the $z$ variable appeared in \cite{Biler} in the context
of a viscous version of our problem arising in astrophysics.  
Here we use the classical conservation law theory for the inviscid (purely transport) problem to prove that monotonicity is preserved
by the flow in all dimensions.
In one dimension it is known that $K=|x|$ can be transformed to the inviscid Burgers problem see e.g. \cite{BV}.
The connection to Burgers equation allows us to prove quite a lot about radially symmetric solutions of the aggregation equation with Newtonian potential, by directly connecting to the classical theory of conservation laws.  We consider three cases: (a) monotone decreasing radial densities for which we have a unique forward time solution; (b) general radial densities for which we have existence of solutions but uniqueness requires
the specification of a jump condition (akin to choosing a particular entropy-flux pair for the definition of distribution solution, and; (c) the case of radially symmetric signed measures for which one requires an additional entropy condition in order to have a unique solution.  All of these cases can be distinguished by the known properties of the inviscid Burgers equation  \cite{Bardos,Lax,Evans}.

\subsection{Case 1: $\rho\in\prob_{RD} (\R^d)$  - existence of unique classical solutions}

\

In the case of radially symmetric monotone decreasing probability measures, we have unique classical solutions by virtue of the fact that the corresponding flow field $v$ is Lipschitz for $r>0$.
Monotonicity is preserved by virtue of the localization of the equations as described above.
More specifically, we have the heuristic that $\rho$ satisfies $\rho_t = \rho^2$ along characteristics so the initial ordering of the density is preserved provided that the characteristics remain well ordered and are well defined.  We can prove this to be the case by going to the mass coordinate formulation above.  The condition that the characteristics remain well defined is akin to proving that shocks will not form from any initial data satisfying the monotonicity condition.
If a shock forms - which we define as a singularity in $m_z$ in the mass equation (\ref{burger}), 
the first time of formation will occur at $t_{shock} = 1/ \sup_z \{m_{init}'(z)\}$.  So we need the characteristic to reach the origin before this time occurs.  Denote by $z_s$ the location at time zero
of this characteristic.  Then our condition on the shock occurring after the characteristic crosses zero is 
$$\frac{z_s}{m_{init}(z_s)} < \frac{1}{m_{init}'(z_s)}\iff m_{init}(z_s)>z_s m_{init}'(z_s)$$ since both $m_{init}$ and $m_{init}'$ are nonnegative.  Using the definition of the mass $m$ and converting back to regular radial coordinates, the above is equivalent to the following condition on the density $\rho$:
\begin{equation}  \int_{B(0,R)}  \rho_{init}(R) dx \leq \int_{B(0,R)} \rho_{init} (x) dx 
\label{equalssign}
\end{equation}
for all $R$, which is true for monotone decreasing initial data $\rho_{init}$.  
The special case of the equals sign in (\ref{equalssign}) corresponds to the shock happening right when the characteristic reaches the origin.
There are exact solutions that satisfy this - corresponding to a density that is the characteristic function of a collapsing
ball.  
The corresponding solution in $(m,z)$ coordinates is 
the well-known Burgers solution
of the form $-z/(1-t)$ that forms a shock in finite time in which all of the characteristics on an interval collapse at the origin simultaneously. This example is the most singular case of the general class of solutions considered
in this subsection. Since the only shocks that form occur at the origin, which is a boundary of the domain,
this results in a global-in-time classical solution of (\ref{burger}) for any initial condition $m_{init}(z)$ arising from a probability density $\rho_{init} \in \prob_{RD}(\R^d)$.  The classical solution 
of the inviscid Burgers equation easily gives us a unique solution of the Lagrangian formulation of the problem as well.  We state these results below:
\begin{theorem}
Given compactly supported initial data $\rho_{init}\in\prob_{RD} (\R^d)$, define $m_{init} = \int_0^r s^{d-1}  d\rho$.  
Then there exists a unique classical solution to equation (\ref{burger}) on the half space $(x,t) \in (0,\infty)\times [0,\infty)$ and a corresponding unique solution of the Lagrangian mapping formulation of the density transport problem.  The solution retains its monotonicity property for all time.
\end{theorem}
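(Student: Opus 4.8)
The plan is to solve the reduced scalar conservation law \eqref{burger} explicitly by the method of characteristics and to recognise the monotone decreasing hypothesis on $\rho_{init}$ as exactly the condition that prevents characteristics from intersecting inside the open half space $\{z>0\}$; existence, regularity, uniqueness and preservation of monotonicity then all follow from elementary manipulations of the characteristic formula, the mass that crosses the boundary $z=0$ being accounted for as a growing Dirac mass at the origin.

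\textbf{Step 1 (reduction and structure of the data).} Working in the variable $z=r^d/d$, I first collect properties of $m_{init}(z)=\int_0^r s^{d-1}\,d\rho_{init}$: it is nonnegative, nondecreasing, bounded above by the total mass, and $m_{init}(0^+)=0$. A possible atom of $\rho_{init}$ at the origin is invisible to $m_{init}$ because the weight $s^{d-1}$ vanishes there, and it is stationary since $v(0,t)=0$; it may be carried along trivially and reinserted at the end. Because $z\mapsto r$ is an increasing change of variables, the radial monotonicity of $\rho_{init}$ translates into the density $m_z$ being nonincreasing in $z$, hence (by averaging) into $z\mapsto m_{init}(z)/z$ being nonincreasing; this last property is a reformulation of \eqref{equalssign}. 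Finally, on every interval $\{z\ge\delta\}$, $\delta>0$, the datum $m_{init}$ is Lipschitz, since a radially decreasing integrable density is bounded away from the origin. Along $\dot z=-m$ the value of $m$ is conserved, so the characteristic issued from $(z_0,0)$ is the line $z(t;z_0)=z_0-m_{init}(z_0)t$ carrying the constant value $m=m_{init}(z_0)$, and it reaches $z=0$ at the finite time $t_0(z_0)=z_0/m_{init}(z_0)$.

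\textbf{Step 2 (no shock for $z>0$; existence and uniqueness for \eqref{burger}).} The core of the proof is that distinct characteristics never meet at a point with $z>0$. For $0<z_0<z_1$, the lines $z(\cdot;z_0)$ and $z(\cdot;z_1)$ meet, if at all, at $t^{\ast}=\tfrac{z_1-z_0}{m_{init}(z_1)-m_{init}(z_0)}$, and the inequality $m_{init}(z_0)/z_0\ge m_{init}(z_1)/z_1$ rearranges precisely into $t_0(z_0)\le t^{\ast}$; hence by time $t^{\ast}$ the inner characteristic has already reached $z=0$, and the intersection point has $z$-coordinate $z_0-m_{init}(z_0)t^{\ast}\le 0$, outside the domain. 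Consequently, for each fixed $t\ge0$ the map $z_0\mapsto z(t;z_0)$ is strictly increasing and injective on $\{z_0>0:\ z(t;z_0)>0\}$, and it maps onto all of $(0,\infty)$ (as $z_0\to0^+$ one has $z(t;z_0)\to0$ since $m_{init}(0^+)=0$, and $z(t;z_0)\to\infty$ as $z_0\to\infty$). This defines a unique foot $z_0(z,t)$ for every $(z,t)$ with $z>0$, and I set $m(z,t):=m_{init}\big(z_0(z,t)\big)$. On each strip $\{z\ge\delta\}$ the relation $z=z_0-m_{init}(z_0)t$ has positive $z_0$-derivative $1-m_{init}'(z_0)t>0$ for $t<t_0(z_0)$ — positivity being once more \eqref{equalssign} — so $z_0(\cdot,\cdot)$ is locally Lipschitz there, $m$ is a Lipschitz solution of \eqref{burger} in the open half space, and it is genuinely $C^1$ when $\rho_{init}\in C^1$. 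Uniqueness is immediate: any classical solution of \eqref{burger} must be constant along the lines $z(\cdot;z_0)$, which are themselves uniquely determined, hence coincides with the solution just built.

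\textbf{Step 3 (monotonicity, the Lagrangian solution, and the main difficulty).} Differentiating the characteristic relation (equivalently, integrating $\rho_t+v\rho_r=\rho^2$ along characteristics) gives, with $\rho$ denoting the density in the $z$ variable, the formula $\rho(z,t)=\rho_{init}(z_0)/(1-\rho_{init}(z_0)t)$ where $z_0=z_0(z,t)$, valid for $t<t_0(z_0)$. Since $z\mapsto z_0(z,t)$ is nondecreasing, $\rho_{init}$ is nonincreasing in $z$, and $x\mapsto x/(1-xt)$ is increasing on $[0,1/t)$, the map $z\mapsto\rho(z,t)$ is nonincreasing for every $t>0$; the mass carried by characteristics that have already crossed $z=0$ accumulates as an atom at the origin, so $\rho(t)\in\prob_{RD}(\R^d)$ for all $t\ge0$. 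Transferring back to $r$, the velocity $v(r,t)=-m(r,t)/r^{d-1}$ is Lipschitz in $r$ on every annulus $\{r\ge\delta\}$ and points inward, so the ODE $\dot r=v(r,t)$ has a unique forward flow $\sigma$ satisfying, by construction, $\rho(t)=\sigma^t\#\rho_{init}$; this is the unique solution of the Lagrangian formulation \eqref{sol1aa}--\eqref{sol2aa}. I expect the main obstacle to be the bookkeeping at the boundary $z=0$: one must verify carefully that the ``mass leaving through $z=0$'' is exactly the growing Dirac mass at the origin, and that this identification is simultaneously consistent with \eqref{burger}, with the Eulerian equation \eqref{rhoeq}, and with the push-forward formulation --- in particular in the degenerate collapsing-ball case, where a whole interval of characteristics reaches $z=0$ at a single instant and the atom gains mass discontinuously in time.
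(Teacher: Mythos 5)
Your proposal is correct and follows essentially the same route as the paper: pass to mass coordinates, observe that radial monotonicity of $\rho_{init}$ is equivalent to $m_{init}(z)/z$ being nonincreasing (the paper's condition (\ref{equalssign})), deduce that characteristics of (\ref{burger}) can only cross at the boundary $z=0$, and hence obtain a global classical (Lipschitz) solution, uniqueness, and preservation of monotonicity via the Riccati formula along characteristics. Your pairwise non-crossing argument in Step 2 is in fact a slightly more careful version of the paper's shock-time computation (which implicitly assumes $m_{init}$ differentiable), and your boundary bookkeeping at $z=0$ is handled in the paper by the push-forward formula of Corollary \ref{push-forward}.
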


 As we show below the situation is much more complicated for general radially solutions without the monotonicity condition.
Some observations can be immediately made using classical results from conservation laws.  Moreover these results connect directly to related
problems in fluid dynamics such as vortex sheet solutions of the 2D Euler equations.  The next two subsections provide a discussion of
these observations.

\subsection{Case 2: $\rho\in\prob_{R} (\R^d)$  - existence of unique solutions with jump condition}

\

In the case of general radially symmetric probability densities, we no longer have classical solutions.  Let us consider the simplest example of data that violates the monotonicity condition - that of a uniform delta-concentration on the boundary of the ball of radius $R_*$.
Following the mass coordinates, we see that this example has a jump discontinuity in $m(z)$ at
$z^*= (R^*)^d/d$.  Since $m$ is the characteristic speed, this results in a jump in the velocity across the delta-ring.  One way to define the solution is to consider a distribution
solution of (\ref{burger}) in which case the speed of the shock (velocity of the delta-ring) is
defined, in $z$ coordinates as $s_{z-shock} = (m_1+m_2)/2$, i.e. the Rankine-Hugoniot condition associated with equation (\ref{burger}).  
As is well-known for scalar conservation laws, we could transform equation (\ref{burger}) by multiplying by any function of $m$, 
\begin{equation}
(F(m))_t -(G(m))_z=0, \quad F'(m) = f(m), \quad G'(m) = m f(m)\label{burgerf}
\end{equation}
for some function $f$, 
yieldling a different jump condition in the weak-distribution form of (\ref{burgerf}), 
$$s_{z-shock} = \frac{[F(m)]}{[G(m)]}, $$where $[\phantom{F}]$ denotes the jump across the shock.

By virtue of well-known results for scalar conservation laws, we obtain families of weak solutions for the general radially symmetric problem.  For a given formulation of the form (\ref{burgerf}) there exists a unique distribution solution.  Uniqueness for inviscid Burgers often requires an additional entropy condition.  In the case of formulation
(\ref{burger}), the entropy condition is automatically satisfied by the monotonicity of $m$, which is guaranteed for any radial probability density, not necessarily monotone.  The full entropy condition would only be required in the case of non-monotone $m$ such as would arise in the case of a signed measure $\rho$. 

 It would be interesting to know whether there is an optimal choice of shock speeds for these under-determined problems.  For example one might also consider an optimal transport framework in which the best choice of shock speed would be one in which
the interaction energy is most quickly dissipated.  For the aggregation problem this would result
in the fastest speed possible for the delta-ring which would satisfy a Lagrangian formulation
of the problem but perhaps not a classical distribution solution in Eulerian variables - even in the $m-z$ framework described above. 
We note that the entropy solution discussed above, in which the speed of the shock is chosen to be the average of the speeds on either side, is a natural generalization of the choice conventionally made for 2D vortex sheets, in which
$v=\nabla^{\perp} K*\rho$ rather than $v=\nabla K*\rho$, and $\rho$ is the vorticity. For that problem
one ascribes a velocity to the sheet that is the arithmetic average of the
speeds on either side \cite{MB}. The frozen time calculation can be made by analogy
to the incompressible flow problem, however the ensuing dynamics is quite different.  For the vortex sheet problem
the flow is tangential to the sheet so the issue of shocks does not arise.  For the aggregation problem the flow
is normal to the sheet and affects the solution on either side of it, because the speed of the shock determines
the rate at which characteristics on either side of the discontinuity are absorbed and the rate at which information is lost in the discontinuity.
 To summarize, if we define a solution as satisfying
an equation of the form (\ref{burgerf}) in the sense of distributions, then we expect a unique solution, however, in the case of jump discontinuities in $m$, the shock speed will depend on the choice of entropy-flux pair as discussed above.  Moreover we believe even more general examples may exist that could satisfy an optimality condition associated with dissipation of the interaction energy.  We finally briefly mention the case of signed measures below.

\subsection{Case 3: signed measures}

\

The case of signed measures introduces yet another source of nonuniqueness of solutions, which we briefly discuss.
A signed measure corresponds to a non-montone (but $L^\infty$) solution of the inviscid Burgers problem.  This general formulation introduces the need for something like an entropy condition to achieve unique distribution solutions.  For example, in the case of a negative delta-ring measure, we have a decreasing jump in $m$ which introduces the possibility of a  rarefaction solution going forward in time.  In the classical weak solution formulation of Burgers equation, the entropy condition would select the rarefaction as the unique forward-time solution.  Nevertheless there exist other solutions, such as the outward-moving shock, that are bonafide distribution solutions, albeit ones that violate Lax's entropy condition whereby the speed of the shock should be faster than the characteristic speed ahead of it, and slower than the characteristic speed behind it.  

\section{Conclusions}

\

\label{conclusions}
We have considered existence of radially symmetric, monotone decreasing solutions to the aggregation equation in the case of more singular potentials $|x|^\alpha/\alpha$ for $\alpha$ in the range $2-d\leq \alpha <1$.
We remind the reader that the problem with $\alpha\geq1$ is known to be globally well-posed for measure data including the case without radial symmetry and monotonicity \cite{CDFLS}. 
For $2-d\leq \alpha <2$ we find that monotonicity is preserved, a feature that is not true for $\alpha > 2$.  Our results provide a rigorous framework for monotonicity behavior observed
in numerical simulations of finite time blowup \cite{HB, HB1} for radially symmetric 
data.  The results also provide
an understanding of the continuation of the solution after blowup.  That understanding
includes the result that one obtains instantaneous mass concentration for certain classes
of $L^1$ initial data including those observed as the asymptotic form of the blowup
profile in numerical simulations \cite{HB, HB1}.
 The special case of the Newtonian potential results in a localization of the problem, reducing to a form of the inviscid Burgers equation on the half line.  
In particular for radially symmetric decreasing data, there is a unique classical solution of the Burgers problem for all time, resulting in a unique solution of the original density problem.
This solution also retains its monotonicity.

In contrast to the Newtonian potential, for the case $1>\alpha>2-d$ the ensuing velocity field is at best H\"older continuous in time and our results are less precise. 
For example, uniqueness of solutions is still an open problem in this range, as is existence in the case of non-monotone, radially symmetric data.  The existence problem is complicated by the fact that the velocity field is at best H\"older continuous which makes it difficult to get convergence estimates for the flow map - something we use to prove existence of solutions in the case of 
monotone data.  It is somewhat ironic that the more singular case of the Newtonian potential
can be more easily solved - the velocity field is more singular, with a jump discontinuity.  However the localization of the dynamics results in a better understanding of the problem.  
For the general nonlocal problem in the range $2-d< \alpha <2$, the monotonicity assumption allows for smoother estimates on the velocity field, namely Lipschitz estimates, which allow
us to prove convergence of approximations and hence existence of a Lagrangian solution.
In addition to the above problems for data with symmetry, the general problem of measure solutions with non-radially symmetric data is wide open. 
Some insights can be gained from recent work on special families of weak solutions.
In the case of the Newtonian potential there exists a class of `patch solutions' that are the time-dependent characteristic functions of of a domain in $\real^d$.
These solutions have recently been observed \cite{patch} to converge in finite time to a measure supported on a set of codimension one.
Other works considers the analogue of vortex sheets for general aggregation equations with potentials that include both attraction and repulsion \cite{James,Hui,PRE}.


 \section*{Acknowledgments}  We thank the referees for many helpful comments.

\section{Appendix}
\subsection{Some general ODE results} \label{ode-appendix}

\

In standard ODE textbooks such as \cite{chicone}, it is proven that the flow map associated to a velocity field which is continuously differentiable in both space and time is itself differentiable. In our case of interest the velocity field is continuously differentiable in space but only continuous in time. We show here that the hypothesis of continuous differentiability in time can be replaced by
a weaker assumption that holds true in our case. Only very minor modifications are needed compared to standard proofs found in ODE textbooks such as \cite{chicone}. We will refer to \cite{chicone} and we will indicate the necessary modifications to be made in the proof there.

Let $\Omega\subset \real^d$ and $J \subset (-\infty,+\infty) $ be two open sets.  Suppose the function $v: \Omega \times J \to \real^d$ satisfies the following:
\begin{enumerate}
\item[(H0)] $v$ is continuous on $\Omega \times J$.
\item[(H1)] For every $t \in J$, the function $x\mapsto  v(x,t)$ is continuously differentiable on $\Omega$. 
\item[(H2)]  Given compact sets  $\bar{\Omega}_1 \subset \subset \Omega$ and $\bar{J}_1 \subset \subset J$, there exists $C>0$ such that
$$
\abs{\nabla v(x,t)} \le C
$$
for all $(x,t) \in \bar{\Omega}_1 \times \bar{J}_1$.
\item[(H3)] Given compact sets  $\bar{\Omega}_1 \subset \subset \Omega$ and $\bar{J}_1 \subset \subset J$, and given  $\epsilon>0$,  there exists $\delta>0$ such that
$$   \abs{\nabla v(y,t)-\nabla v(x,t)} \le \epsilon  $$
for all $x,y \in \bar{\Omega}_1$ satifying $\abs{x-y}< \delta$ and for all $t \in \bar{J}_1$.
\end{enumerate}
 In all the above $\nabla v(x,t)$ always stands for the derivative of $v$ with respect to $x$.
\begin{theorem} \label{chicone}
Under the above hypothesis, given $(x_0,t_0)\in \Omega \times J$ there exists open sets $\Omega_0 \subset \Omega$ and $J_0 \subset J$ such that $(x_0,t_0) \in \Omega_0 \times J_0$ and a unique continuous function $\sigma: \Omega_0 \times J_0 \mapsto \real^d$ such that
\begin{equation}\label{ode}
 \sigma(x,t)=x + \int_{t_0}^t v(\sigma(x,s),s) ds.
\end{equation}
 Moreover, given $t \in J_0$, the mapping $x \mapsto \sigma(x,t)$ is continuously differentiable on $\Omega_0$ and we have
 \begin{equation}\label{ode2}
 \nabla\sigma(x,t) = Id+  \int_{t_0}^t \nabla v( \sigma(x,s),s) \nabla\sigma(x,s) ds.
\end{equation}
\end{theorem}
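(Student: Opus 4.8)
\textit{Proof proposal.} The plan is to follow the classical argument for differentiable dependence on initial conditions (as in \cite{chicone}), isolating the one place where joint $C^1$ regularity of $v$ is traditionally used and replacing it by the equicontinuity hypothesis (H3). First I would dispatch existence and uniqueness of $\sigma$ on a product $\Omega_0\times J_0$ of a small ball and a short time interval around $(x_0,t_0)$ by the standard Picard--Lindel\"of contraction: the map $\sigma\mapsto \left(x+\int_{t_0}^{\cdot} v(\sigma(x,s),s)\,ds\right)$ is a contraction on a closed ball of $C(\Omega_0\times J_0,\real^d)$ because (H0) gives continuity of the integrand in time and (H1)--(H2) give a Lipschitz bound $|v(x,t)-v(y,t)|\le C|x-y|$ uniform for $t\in J_0$. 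This part uses nothing beyond the usual hypotheses.

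Next I would record a preliminary observation that makes the variational equation meaningful: although $v$ is only assumed continuous in $t$, the hypotheses (H0)--(H3) already force $(y,t)\mapsto\nabla v(y,t)$ to be \emph{jointly} continuous. Indeed, for fixed $y$ and a unit vector $e_j$, the map $t\mapsto \bigl(v(y+he_j,t)-v(y,t)\bigr)/h$ is continuous by (H0), and integrating the bound in (H3) gives $\bigl|\tfrac{v(y+he_j,t)-v(y,t)}{h}-\nabla v(y,t)e_j\bigr|\le\epsilon$ for all $t$ once $|h|$ is small; hence $t\mapsto\nabla v(y,t)$ is a uniform limit of continuous functions, so continuous, and combined with the equicontinuity in $y$ from (H3) this yields joint continuity. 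Consequently $s\mapsto\nabla v(\sigma(x,s),s)$ is continuous, so the linear integral equation $W(x,t)=Id+\int_{t_0}^t\nabla v(\sigma(x,s),s)\,W(x,s)\,ds$ has a unique continuous solution $W(x,\cdot)$ by the usual theory of linear systems.

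The heart of the matter is then to show $x\mapsto\sigma(x,t)$ is differentiable with $\nabla\sigma(x,t)=W(x,t)$. Set $\Delta(h,s)=\sigma(x+he_j,s)-\sigma(x,s)$ and $\zeta(h,s)=\Delta(h,s)/h$. Subtracting the integral equations and writing $v(\sigma(x+he_j,s),s)-v(\sigma(x,s),s)=M(h,s)\Delta(h,s)$ with $M(h,s)=\int_0^1\nabla v(\sigma(x,s)+\tau\Delta(h,s),s)\,d\tau$ (valid since $v(\cdot,s)\in C^1$), one gets $\zeta(h,t)=e_j+\int_{t_0}^t M(h,s)\zeta(h,s)\,ds$. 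Gronwall together with (H2) bounds $\|\zeta(h,s)\|$ uniformly, hence $\|\Delta(h,s)\|\le C'|h|\to 0$ uniformly in $s\in J_0$; then (H3) gives $\sup_{s\in J_0}\|M(h,s)-\nabla v(\sigma(x,s),s)\|\to 0$ as $h\to 0$. Subtracting the equation for $\zeta(h,\cdot)$ from that for $W(x,\cdot)e_j$ and applying Gronwall once more yields $\sup_{s\in J_0}\|\zeta(h,s)-W(x,s)e_j\|\to 0$, which is the asserted differentiability together with formula (\ref{ode2}); since this holds for every $e_j$ and $W(x,\cdot)$ depends continuously on $x$ (continuity of $\sigma$, joint continuity of $\nabla v$, and continuous dependence for linear systems), the partials are continuous and $x\mapsto\sigma(x,t)$ is $C^1$.

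The only genuinely delicate point — and "obstacle" overstates it — is the \emph{uniform in $s$} passage to the limit $M(h,s)\to\nabla v(\sigma(x,s),s)$ in the third step: this is exactly where (H3) substitutes for the classical continuity of $\nabla v$ in $t$. Once that is in hand, the remaining estimates are routine Gronwall bookkeeping that parallels \cite{chicone}, and I would simply indicate the substitutions rather than reproduce the calculations in full.
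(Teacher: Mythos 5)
Your proof is correct, but it takes a genuinely different route from the paper. The paper follows Chicone's \emph{fiber contraction principle}: it defines the operator $\Psi(\phi,\Phi)(x,t)= Id+\int_{t_0}^t \nabla v(\phi(x,s),s)\,\Phi(x,s)\,ds$ on suitable function spaces and observes that (H2) gives the contraction in $\Phi$ while (H3) is exactly what is needed for continuity of $\Psi$ in $\phi$; differentiability of the flow and formula \eqref{ode2} then come out of the abstract fixed-point machinery. You instead run the classical Hartman-style argument: solve the linear variational equation for a candidate $W$, form the difference quotient $\zeta(h,\cdot)$, write the increment of $v$ via the fundamental theorem of calculus as $M(h,s)\Delta(h,s)$, and use two applications of Gronwall, with (H3) entering precisely to get $\sup_s\|M(h,s)-\nabla v(\sigma(x,s),s)\|\to 0$. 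Both arguments isolate (H3) as the substitute for joint regularity of $\nabla v$, and both are complete modulo routine bookkeeping (e.g.\ taking $\Omega_0$ convex so the segment between $\sigma(x,s)$ and $\sigma(x+he_j,s)$ stays in the domain). Your preliminary observation --- that (H0), (H1) and (H3) already force $\nabla v$ to be \emph{jointly} continuous, since the difference quotients are continuous in $t$ by (H0) and converge uniformly in $t$ by (H3) --- is a nice bonus absent from the paper: it shows the theorem is really the classical statement for velocity fields with jointly continuous spatial gradient, and it guarantees for free that the coefficient $s\mapsto\nabla v(\sigma(x,s),s)$ in the variational equation is continuous. What the paper's route buys is brevity (one can cite Chicone's proof nearly verbatim and flag the two places where (H2) and (H3) are invoked); what your route buys is self-containedness and the explicit identification of the hypotheses with the minimal classical ones.
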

\begin{proof} The proof of Theorem 1.184, page 120 in \cite{chicone} (or Theorem 1.261, page 138 of the online
version of \cite{chicone}) can be carried out  with very minor modifications. Let us just mentioned  where and how hypothesis (H3) is needed. 
 In the proof of \cite{chicone} 
  the space $X$ and $Y$ are defined by $X= C(b(t_0, \delta) \times B(x_0,\nu/2), \bar{B}(x_0,\nu))$ and $Y= C_b(b(t_0, \delta) \times B(x_0,\nu/2), L(\real^d, \real^d))$, where  $b(t_0, \delta)$ and $B(x_0,\nu/2)$ denotes balls of radius $\delta$ and $\nu/2$ and $L(\real^d, \real^d)$ denotes the set of linear transformations on $\real^n$. Both $X$ and $Y$ are endowed with the sup norm. $C_b$ stands for continuous and bounded. The mapping $\Psi: X \times Y \to Y$ is defined by
$$
\Psi(\phi,\Phi)(x,t)= Id+  \int_{t_0}^t \nabla v( \phi(x,s),s) \Phi(x,s) ds.
$$
In order to use the fiber contraction principle from \cite{chicone}, we must verify that  $\Psi$ is continuous.
From (H2) we easily obtain $\norm{\Psi(\phi,\Phi_1)-\Psi(\phi,\Phi_2)} \le K \delta \norm{\Phi_1-\Phi_2}$ where $K=\sup_{ B(x_0,\nu/2) \times b(t_0, \delta)} \abs{\nabla v}$. Hypothesis (H3) is needed in order to obtain continuity of $\Psi$ with respect to its first argument. To see this write
\begin{align}
\Psi(\phi_1,\Phi)(x,t)-\Psi(\phi_2,\Phi)(x,t) =  \int_{t_0}^t \p{ \nabla v( \phi_1(x,s),s)-\nabla v_2( \phi(x,s),s) } \Phi(x,s) ds.
\end{align}
Using  (H3) we see that  $\norm{\Psi(\phi_1,\Phi)-\Psi(\phi_2,\Phi)}$ can be made as small as we want by choosing $\phi_1$ and $\phi_2$ close enough with respect to the sup-norm.

\end{proof}
\begin{remark}
Since $\nabla v$ is not  assumed to be continuous with respect to time, the function   $t \mapsto \nabla\sigma(x,t)$ is not necessarily continuously differentiable. However it is  absolutely continuous as can been seen from   \eqref{ode2}. Therefore, given $x \in \Omega_0$, the  function $Y(t)= \nabla\sigma(x,t)$ is differentiable for almost every $t \in J_0$ and the differential equation 
$$
Y'(t)= \nabla v( \sigma(x,t),t)\;  Y(t)
$$
holds almost everywhere in $J_0$.  Then we can use Liouville Theorem (which is stated below) to deduce that \begin{equation}\label{ode50}
 \frac{d}{dt}\text{det }\nabla\sigma(x,t) = (\text{div }v)( \sigma(x,t),t) \;  \text{det } \nabla\sigma(x,t) 
\end{equation}
also holds almost everywhere in $J_0$. This of course implies that 
\begin{equation}\label{ode55}
 \text{det }\nabla\sigma(x,t) =   \text{exp} \big( {\int_{t_0}^t (\text{div }v)( \sigma(x,s),s) ds} \big),
\end{equation}
which is the formula needed in our case (see \eqref{ode555}).
\end{remark}

\begin{theorem}[Liouville] Let $A$ be  $d \times d$ matrix and
let Y(t) be a $d \times d$ time dependent matrix which is differentiable at $t=t_0$ and satisfies
$
Y'(t_0)= A \;  Y(t_0)$.
Then the function $\Lambda(t) = \text{ det } Y(t) $ is differentiable at $t_0$ and satisfies
$
\Lambda'  (t_0) = (\text{Tr }  A) \;   \Lambda (t_0). 
$
\end{theorem}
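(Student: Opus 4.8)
The plan is to reduce the statement to two standard facts of linear algebra: Jacobi's formula for the differential of the determinant, and the cofactor identity $Y\,\mathrm{adj}(Y) = (\det Y)\,\mathrm{Id}$, where $\mathrm{adj}(M)$ denotes the adjugate of $M$ (the transpose of its cofactor matrix). Since $\det$ is a polynomial in the $d^2$ matrix entries, it is $C^\infty$ on $\R^{d\times d}$, so $\Lambda = \det\circ Y$ is automatically differentiable at $t_0$ by the chain rule; the only content is to identify $\Lambda'(t_0)$ with $(\mathrm{Tr}\,A)\,\Lambda(t_0)$.

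First I would record Jacobi's formula. Expanding $\det M$ along any fixed row shows that $\partial(\det M)/\partial M_{ij}$ equals the $(i,j)$ cofactor $C_{ij}(M)$, and since $\mathrm{adj}(M)_{ji} = C_{ij}(M)$ one obtains, for every $H\in\R^{d\times d}$,
$$ D(\det)(M)[H] \;=\; \sum_{i,j} C_{ij}(M)\,H_{ij} \;=\; \sum_{j} \big(\mathrm{adj}(M)\,H\big)_{jj} \;=\; \mathrm{Tr}\big(\mathrm{adj}(M)\,H\big). $$
Applying the chain rule at $t_0$ with $M = Y(t_0)$ and $H = Y'(t_0)$ gives $\Lambda'(t_0) = \mathrm{Tr}\big(\mathrm{adj}(Y(t_0))\,Y'(t_0)\big)$.

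Then I would insert the hypothesis $Y'(t_0) = A\,Y(t_0)$ and use the cyclic invariance of the trace together with the cofactor identity:
$$ \Lambda'(t_0) \;=\; \mathrm{Tr}\big(\mathrm{adj}(Y(t_0))\,A\,Y(t_0)\big) \;=\; \mathrm{Tr}\big(A\,Y(t_0)\,\mathrm{adj}(Y(t_0))\big) \;=\; \mathrm{Tr}\big(A\,(\det Y(t_0))\,\mathrm{Id}\big) \;=\; (\mathrm{Tr}\,A)\,\det Y(t_0), $$
which is exactly $(\mathrm{Tr}\,A)\,\Lambda(t_0)$. I would emphasize that no invertibility of $Y(t_0)$ is needed here, since $Y\,\mathrm{adj}(Y) = (\det Y)\,\mathrm{Id}$ holds for all matrices.

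The main obstacle — really the only non-bookkeeping step — is establishing Jacobi's formula; everything after that is the chain rule applied to a polynomial map and the cyclic property of the trace. If one prefers to avoid the adjugate, an alternative is to write $Y(t) = (y_1(t)\mid\cdots\mid y_d(t))$ in columns, use multilinearity of $\det$ to get $\Lambda'(t_0) = \sum_i \det\big(y_1(t_0),\dots,y_i'(t_0),\dots,y_d(t_0)\big)$, note that $y_i'(t_0) = A\,y_i(t_0)$, and then, when $Y(t_0)$ is invertible, expand each $A\,y_i(t_0)$ in the basis $\{y_j(t_0)\}_j$ and collect diagonal coefficients to reach $(\mathrm{Tr}\,A)\,\det Y(t_0)$; the singular case then follows because both sides are polynomial in the entries of $Y(t_0)$ and the invertible matrices are dense. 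I would present the adjugate route, as it dispatches all cases uniformly.
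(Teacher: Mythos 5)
Your proof is correct. Note that the paper does not actually prove this statement: its ``proof'' is a one-line citation to Hartman's ODE text, so there is nothing to compare step by step. Your argument via Jacobi's formula $D(\det)(M)[H]=\mathrm{Tr}(\mathrm{adj}(M)H)$, followed by substitution of $Y'(t_0)=AY(t_0)$, cyclicity of the trace, and the identity $Y\,\mathrm{adj}(Y)=(\det Y)\,\mathrm{Id}$, is a clean, self-contained replacement for that citation. Two points worth keeping explicit in a final write-up: first, the chain rule applies at the single point $t_0$ because $\det$ is a polynomial (hence everywhere differentiable) and $Y$ is assumed differentiable at $t_0$ only, which matches the pointwise hypothesis of the statement; second, as you note, the adjugate identity requires no invertibility of $Y(t_0)$, which is exactly why this route handles the degenerate case uniformly --- a relevant feature here, since in the paper's application $\det\nabla\sigma$ could a priori degenerate and the formula is still needed almost everywhere along trajectories. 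The column-expansion alternative you sketch is also fine but, as you say, needs the density-of-invertibles patch, so the adjugate route is the better choice.
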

\begin{proof}
See for example Hartman \cite{Hartman}.
\end{proof}


%

\subsection{Some general Lemmas}

\


A proof of the following Lemma can be found in \cite[Lemma 3.11]{Canizo}.
\begin{lemma}\label{canizo} Let $T,S: \real^d \to \real^d$ be  two Borel maps. Also take $\rho \in \prob_2(\real^d)$. Then
$$
W_2(S\# \rho, T\# \rho) \le \norm{S-T}_{L^\infty(\text{supp} \rho )}.  
$$  
\end{lemma}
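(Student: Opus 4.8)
The plan is to exhibit one explicit transport plan between $S\#\rho$ and $T\#\rho$ and bound its quadratic cost. Recall that $W_2(\mu,\nu)^2 = \inf_{\pi} \int_{\real^d\times\real^d} \abs{x_1-x_2}^2 \, d\pi(x_1,x_2)$, where the infimum runs over all probability measures $\pi$ on $\real^d\times\real^d$ with first marginal $\mu$ and second marginal $\nu$ (see the reference cited above for the definition and basic properties of $W_2$). Thus it suffices to produce a single admissible $\pi$ with $\int \abs{x_1-x_2}^2\,d\pi \le \norm{S-T}_{L^\infty(\text{supp}\,\rho)}^2$.

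First I would define the Borel map $\Pi:\real^d\to\real^d\times\real^d$ by $\Pi(x)=(S(x),T(x))$ and set $\pi=\Pi\#\rho$. Using the push-forward change-of-variables identity (stated in the notation section), for every bounded Borel $f$ on $\real^d$ one has $\int f(x_1)\,d\pi(x_1,x_2)=\int f(\Pi(x))\cdot(\text{first coordinate})\,d\rho$ — more precisely $\int f(x_1)\,d\pi(x_1,x_2)=\int f(S(x))\,d\rho(x)=\int f(x)\,d(S\#\rho)(x)$, so the first marginal of $\pi$ is $S\#\rho$; the analogous computation gives second marginal $T\#\rho$. Hence $\pi$ is an admissible coupling, and applying the push-forward formula once more with the test function $(x_1,x_2)\mapsto\abs{x_1-x_2}^2$ yields
\[
W_2(S\#\rho,T\#\rho)^2 \;\le\; \int_{\real^d\times\real^d}\abs{x_1-x_2}^2\,d\pi(x_1,x_2)\;=\;\int_{\real^d}\abs{S(x)-T(x)}^2\,d\rho(x).
\]

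To conclude I would observe that $\rho(\real^d\setminus\text{supp}\,\rho)=0$, so $\abs{S(x)-T(x)}\le\norm{S-T}_{L^\infty(\text{supp}\,\rho)}$ for $\rho$-a.e.\ $x$, whence $\int_{\real^d}\abs{S(x)-T(x)}^2\,d\rho(x)\le\norm{S-T}_{L^\infty(\text{supp}\,\rho)}^2\,\rho(\real^d)=\norm{S-T}_{L^\infty(\text{supp}\,\rho)}^2$, using that $\rho$ is a probability measure. Taking square roots gives the claim. There is essentially no obstacle: the only points needing a word of care are the marginal identities for $\pi$ and the remark that integration against $\rho$ only sees $\text{supp}\,\rho$, both of which are routine. (The same argument in fact gives the slightly sharper $W_2(S\#\rho,T\#\rho)\le\norm{S-T}_{L^2(\rho)}$.)
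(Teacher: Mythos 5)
Your proof is correct, and it is the standard argument: build the explicit coupling $\pi=(S,T)\#\rho$, verify its marginals, bound its quadratic cost by $\int|S-T|^2\,d\rho$, and then use that $\rho$ is concentrated on its support. The paper itself does not prove this lemma but defers to [C\~anizo--Carrillo--Rosado, Lemma 3.11], where the same coupling argument is used, so your proposal reproduces the intended proof rather than offering a genuinely different route.
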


\begin{lemma} \label{preserve} Suppose that $\rho\in\prob(\real^d)$ has  compact support and
suppose that  $\prob_{RD}(\real^d) \ni \rho_n$ converges narrowly to $\rho$. Then $\rho$ also belongs to $\prob_{RD}(\real^d)$.
\end{lemma}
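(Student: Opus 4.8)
The plan is to extract from $(\rho_n)$ a subsequence along which the radial profiles of the absolutely continuous parts converge pointwise to a monotone limit, to identify that limit with the part of $\rho$ living off the origin, and to absorb the remaining mass into a Dirac mass at $0$.

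Concretely, I would write $\rho_n=m_n\delta+g_n$ with $m_n\in[0,1]$ and $g_n\ge 0$ radially symmetric, monotone decreasing, $\int_{\real^d}g_n\,dx=1-m_n\le 1$, and identify $g_n$ with a genuine decreasing radial profile $r\mapsto g_n(r)$ on $(0,+\infty)$. The key quantitative consequence of monotonicity is $|B(0,r)|\,g_n(r)\le \int_{B(0,r)}g_n\,dx\le 1$, hence $g_n(r)\le |B(0,r)|^{-1}$ for every $r>0$ and every $n$; in particular the $g_n$ are uniformly bounded on each annulus $\{\epsilon<|x|<R\}$. By Helly's selection theorem applied on the intervals $[1/k,k]$ together with a diagonal argument, there is a subsequence (still denoted $g_n$) converging at every point of $(0,+\infty)$ to a \emph{decreasing} function $g$ with $0\le g(r)\le|B(0,r)|^{-1}$. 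Since on each annulus $\{\epsilon<|x|<R\}$ the convergence $g_n\to g$ is bounded, it also holds in $L^1(\{\epsilon<|x|<R\})$.

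Next I would test narrow convergence against functions $f\in C_b^0(\real^d)$ whose support is a compact subset of $\real^d\setminus\{0\}$. For such $f$ one has $f(0)=0$ and $\text{supp}\,f\subset\{\epsilon<|x|<R\}$ for some $0<\epsilon<R$, so
$$\int_{\real^d} f\,d\rho=\lim_{n\to\infty}\int_{\real^d} f\,d\rho_n=\lim_{n\to\infty}\int_{\real^d} f\,g_n\,dx=\int_{\real^d} f\,g\,dx.$$
Since such $f$ determine finite Borel (Radon) measures on the open set $\real^d\setminus\{0\}$, this identity forces $\rho$ restricted to $\real^d\setminus\{0\}$ to equal $g\,dx$. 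In particular $\int_{\real^d}g\,dx=\rho(\real^d\setminus\{0\})\le 1$, so $g\in L^1(\real^d)$, and then $\rho=\rho(\{0\})\,\delta+\rho|_{\real^d\setminus\{0\}}=\rho(\{0\})\,\delta+g$. As $g$ is nonnegative, radially symmetric, monotone decreasing, and $\rho(\{0\})\ge 0$, this shows $\rho\in\prob_{RD}(\real^d)$ (and $\rho(\{0\})+\int g=\rho(\real^d)=1$ confirms it is a probability measure).

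The step needing the most care is the compactness argument in the middle paragraph: the monotonicity of the $g_n$ must be used in an essential way, first to get the pointwise bound $g_n(r)\le|B(0,r)|^{-1}$ (hence local $L^\infty$ bounds and compactness in $L^1_{\mathrm{loc}}(\real^d\setminus\{0\})$), and then to guarantee that the limit $g$ is again monotone decreasing. Without monotonicity a narrow limit of absolutely continuous measures can acquire singular parts away from the origin, so this is exactly where the hypothesis bites; the compact support assumption on $\rho$, on the other hand, plays no role in this argument.
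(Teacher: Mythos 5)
Your proof is correct, but it follows a genuinely different route from the paper's. The paper works directly on the limit measure $\rho$: radial symmetry is obtained by testing narrow convergence against $f\circ R$ for rotations $R$, and monotonicity by comparing the $\rho$-masses of two thin, disjoint, equal-volume annuli centered at radii $r_1<r_2$ (approximating their indicators by continuous functions with disjoint supports and passing to the limit in $\int f_1\,d\rho_n\ge\int f_2\,d\rho_n$). That argument is shorter, but it leaves implicit the final step of converting the annulus inequality $\rho(A_2)\le\rho(A_1)$ into the structural statement $\rho=m\delta+g$ with $g$ a decreasing $L^1$ density — one still has to rule out singular parts away from the origin and differentiate the measure. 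Your argument instead works on the approximating densities: the uniform bound $g_n(r)\le|B(0,r)|^{-1}$ coming from monotonicity, Helly's selection theorem plus a diagonal extraction, and bounded convergence on annuli give a decreasing limit profile $g$, and testing against $f\in C_c(\real^d\setminus\{0\})$ identifies $\rho|_{\real^d\setminus\{0\}}=g\,dx$, so the decomposition $\rho=\rho(\{0\})\delta+g$ comes out explicitly. This is longer but more self-contained, and your closing remarks are accurate: the subsequence extraction is harmless because the narrow limit $\rho$ is already fixed, and the compact support hypothesis is indeed not used.
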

\begin{proof}  Let $R$ be a rotation of $\real^d$ and let $f \in C(\real^d).$
Then $f \circ R \in C(\real^d)$ and $\int f \circ R d\rho_n = \int f d\rho_n$.
Taking limits we see that $\int f\circ R d\rho = \int f d\rho$ so that
$\rho \in  
\mes_R(\real^d).$  To prove $\rho$ is decreasing fix $0 < r_1 < r_2$ and
take disjoint small rings $A_j = \{r_j - \eta_j \leq |x| \leq r_j + \delta_j\}, j = 1,2$
having the same volume. We may assume $\rho(\partial A_j) =0$.  Then
there exist continuous functions $f_1 \geq \chi_{A_1}$ and $f_2 \leq
\chi_{A_2}$ with disjoint supports such that
 $|\rho(A_j) - \int f_j d\rho| < \epsilon.$
By hypothesis we have $\inf f_1 d\rho_n \geq \int f_2 d\rho_n$,
so that $\rho(A_2) + 2\epsilon \leq \rho(A_1).$  Then shrinking $\epsilon,$
$A_1$ and $A_2$ shows that $\rho \in \prob_{RD}(\real^d).$
\end{proof}

\

\begin{lemma} \label{holderlem}
Let $\alpha > 2-d$ and  suppose $y: [0,+\infty) \to [0,+\infty)$ is an absolutely continuous function satisfying
$- y(t)^{\alpha-1} \le y'(t) \le 0$ for almost every $t \in [0,+\infty)$ for which $y(t)>0$.
  If $\alpha \le 1$ then  $y(t)$ is H\"older continuous. To be more precise: 
  $$
- ( (2-\alpha)(t-s))^\frac{1}{2-\alpha} \le y(t)-y(s) \le 0
  $$ 
  for all $0\le s \le t$.  If $\alpha>1$ then $y(t)$ is Lipschitz continuous. To be more precise: 
  $$
  -y(0)^{\alpha-1} (t-s) \le y(t)-y(s) \le 0
  $$ 
  for all $0\le s \le t$.
\end{lemma}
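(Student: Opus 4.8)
The plan is to reduce everything to two facts: that $y$ is non-increasing, and a one-sided differential inequality that integrates immediately. The only mildly delicate point throughout is the set $\{t:y(t)=0\}$, on which the hypothesis is empty and, when $\alpha<1$, the bound $-y^{\alpha-1}$ is singular; this is handled by the standard fact that an absolutely continuous function has derivative zero almost everywhere on any level set. Indeed, since $y\ge 0$ is absolutely continuous, $y'=0$ a.e. on $\{y=0\}$ while $y'\le 0$ on $\{y>0\}$ by hypothesis, so $y'\le 0$ a.e. and $y$ is non-increasing; this gives the upper bound $y(t)-y(s)\le 0$ for $0\le s\le t$ in both cases.

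For the case $\alpha>1$, monotonicity gives $0\le y(t)\le y(0)$, hence $y(t)^{\alpha-1}\le y(0)^{\alpha-1}$ since $\alpha-1>0$. Then $y'(t)\ge -y(t)^{\alpha-1}\ge -y(0)^{\alpha-1}$ a.e. on $\{y>0\}$ and $y'(t)=0\ge -y(0)^{\alpha-1}$ a.e. on $\{y=0\}$; integrating from $s$ to $t$ yields $y(t)-y(s)\ge -y(0)^{\alpha-1}(t-s)$, which is the claimed Lipschitz bound.

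For the case $\alpha\le 1$, I would set $p:=2-\alpha\ge 1$ and $\phi:=y^{\,p}$. Since $p\ge 1$, $r\mapsto r^p$ is locally Lipschitz on $[0,+\infty)$, so $\phi$ is absolutely continuous; on $\{y>0\}$ the chain rule and the hypothesis give $\phi'=p\,y^{p-1}y'\ge p\,y^{p-1}(-y^{\alpha-1})=-(2-\alpha)$, while on $\{y=0\}$ one has $\phi=0$ and hence $\phi'=0\ge -(2-\alpha)$ a.e. Integrating, $y(t)^{\,2-\alpha}\ge y(s)^{\,2-\alpha}-(2-\alpha)(t-s)$. Writing $a=y(s)$ and $b=(2-\alpha)(t-s)\ge 0$: if $b\ge a^p$ then trivially $y(s)-y(t)\le y(s)=a\le b^{1/p}$; if $b<a^p$ then $y(t)\ge(a^p-b)^{1/p}$ and it remains to verify $a-(a^p-b)^{1/p}\le b^{1/p}$, i.e. $(a^p-b)^{1/p}+b^{1/p}\ge a$, which follows from the super-additivity $(u+v)^p\ge u^p+v^p$ for $u,v\ge0$, $p\ge1$ (a consequence of $t^p\le t$ on $[0,1]$) applied with $u=(a^p-b)^{1/p}$ and $v=b^{1/p}$. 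In either case $y(s)-y(t)\le\big((2-\alpha)(t-s)\big)^{1/(2-\alpha)}$, which together with the upper bound completes the proof.

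The argument is short and the only step needing care is the treatment of $\{y=0\}$ described in the first paragraph; I do not anticipate a genuine obstacle, as the remainder consists of two one-line integrations and the elementary inequality of the last paragraph.
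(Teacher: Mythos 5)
Your proof is correct and follows essentially the same route as the paper's: monotonicity gives the upper bound, the $\alpha>1$ case follows from $y(t)^{\alpha-1}\le y(0)^{\alpha-1}$, and the $\alpha\le 1$ case uses the substitution $y^{2-\alpha}$ followed by the super-additivity of $r\mapsto r^{2-\alpha}$ (which the paper invokes as convexity, writing $(y(s)-y(t))^{2-\alpha}\le y(s)^{2-\alpha}-y(t)^{2-\alpha}$ directly, without your case split). Your explicit treatment of the level set $\{y=0\}$ is a point the paper glosses over, and is a welcome refinement rather than a different argument.
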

\begin{proof} The case $\alpha>1$ is trivial since  the inequality $- y(t)^{\alpha-1} \le y'(t) \le 0$ together with the non-negativity of $y$ implies
  $- y(0)^{\alpha-1} \le y'(t) \le 0$. We now prove the Lemma for $2-d<\alpha \le 1$.
For almost every $t\ge 0$ for which $y(t)>0$ we have $- (2-\alpha) \le \frac{d}{dt}\p{ y(t)^{2-\alpha}} \le 0$. It is then clear that
 $ -(2-\alpha)(t-s) \le y(t)^{2-\alpha}-y(s)^{2-\alpha} \le 0$ for all $0\le s \le t$. But because of the convexity of the function $r \mapsto r^{2-\alpha}$ we have that $(y(s)-y(t))^{2-\alpha} \le y(s)^{2-\alpha}-y(t)^{2-\alpha}$, which gives the result.\end{proof}

\end{document}